\newif\ifdraft
\numberwithin{equation}{subsection}
\renewcommand{\tilde}{\widetilde}
\newtheorem{thm}{Theorem}[section]
\newtheorem{cor}[thm]{Corollary}
\newtheorem{lemma}[thm]{Lemma}
\newtheorem{prop}[thm]{Proposition}
\theoremstyle{remark}
\newtheorem*{rmk}{Remark}
\theoremstyle{definition}
\newtheorem{de}[thm]{Definition}
\newcommand{\R}{{\mathbb R}}
\newcommand{\bbC}{{\mathbb C}}
\newcommand{\Q}{{\mathbb Q}}
\newcommand{\Z}{{\mathbb Z}}
\newcommand{\N}{{\mathbb N}}
\newcommand{\VV}{{\mathbb{V}}}
\DeclareMathOperator{\Res}{Res}
\DeclareMathOperator{\supp}{supp}
\DeclareMathOperator{\gr}{Gr}
\DeclareMathOperator{\DR}{DR}
\newcommand{\cB}{\mathcal{B}}
\newcommand{\cC}{\mathcal{C}}
\newcommand{\cE}{\mathcal{E}}
\newcommand{\cH}{\mathcal{H}}
\newcommand{\cI}{\mathcal{I}}
\newcommand{\cL}{\mathcal{L}}
\newcommand{\cM}{\mathcal{M}}
\newcommand{\cO}{\mathcal{O}}
\newcommand{\cT}{\mathcal{T}}
\newcommand{\cV}{\mathcal{V}}
\newcommand{\theoremref}[1]{\hyperref[#1]{Theorem~\ref*{#1}}}
\newcommand{\lemmaref}[1]{\hyperref[#1]{Lemma~\ref*{#1}}}
\newcommand{\definitionref}[1]{\hyperref[#1]{Definition~\ref*{#1}}}
\newcommand{\propositionref}[1]{\hyperref[#1]{Proposition~\ref*{#1}}}
\newcommand{\conjectureref}[1]{\hyperref[#1]{Conjecture~\ref*{#1}}}
\newcommand{\corollaryref}[1]{\hyperref[#1]{Corollary~\ref*{#1}}}
\newcommand{\exampleref}[1]{\hyperref[#1]{Example~\ref*{#1}}}
\newcommand{\be}{\begin{enumerate}}
\newcommand{\ee}{\end{enumerate}}
\newcommand{\bt}{\begin{thm}}
\newcommand{\et}{\end{thm}}
\newcommand{\bde}{\begin{de}}
\newcommand{\ede}{\end{de}}
\newcommand{\bc}{\begin{cor}}
\newcommand{\ec}{\end{cor}}
\newcommand{\blm}{\begin{lemma}}
\newcommand{\elm}{\end{lemma}}
\newcommand{\bp}{\begin{proof}}
\newcommand{\ep}{\end{proof}}
\newcommand{\beq}{\begin{equation*}\label{xx}}
\newcommand{\eeq}{\end{equation*}}
\renewcommand{\nu}{{\mathcal{V}}}
\newcommand{\lee}{\leqslant}
\newcommand{\gee}{\geqslant}
\newcommand{\norm}[1]{\left\lVert#1\right\rVert}
\renewcommand{\ref}[1]{\hyperref[#1]{\ref*{#1}}}
\renewcommand{\d}[1]{{#1^\bullet}}
\newcommand{\Dmod}{\mathfrak{D}}
\newcommand{\rarr}{\longrightarrow}
\newcommand{\abs}[1]{\lvert #1 \rvert}
\newcommand{\floor}[1]{\lfloor #1 \rfloor}
\newcommand{\ceil}[1]{\lceil #1 \rceil}
\newcommand{\Rmnum}[1]{\expandafter\@slowromancap\romannumeral #1@}
\begin{document}
\title{Vanishing and injectivity for $\R$-Hodge modules and $\R$-divisors}

\author{Lei Wu}
\address{Department of Mathematics, University of Utah,
155 S 1400 E, Salt Lake City, UT 84112, USA}
\email{\tt lwu@math.utah.edu}
\date{}

\begin{abstract}
We prove the injectivity and vanishing theorem for $\R$-Hodge modules and $\R$-divisors over projective varieties, extending the results for rational Hodge modules and integral divisors in \cite{Wu15}. In particular, the injectivity generalizes the fundamental injectivity of Esnault-Viehweg for normal crossing $\Q$-divisors, while the vanishing generalizes Kawamata-Viehweg vanishing for $\Q$-divisors. As a main application, we also deduce a Fujita-type freeness result for $\R$-Hodge modules in the normal crossing case.
\end{abstract}
\maketitle
%\tableofcontents{}
\section{Introduction and Main Theorems}
A Kawamata-Viehweg-type vanishing for rational Hodge modules has been proved in \cite{Suh15} and \cite{Wu15}. It generalizes Kodaira-Saito vanishing and many other vanishing results of its geometric realizations in the rational Hodge-module case for big and nef line bundles. In \cite{Wu15}, the vanishing has been further improved to an injectivity result in a natural way using the degeneration of Hodge-to-de Rham spectral sequence following the idea of Esnault-Viehweg. The method in $loc.$ $cit.$ gives clues about the existence of both the injectivity and the vanishing for rational Hodge modules and $\Q$-divisors; see \cite[Theorem 7.5 and 7.6]{Wu15}.    %See also \cite{Pop15} for an overview.  
In this paper, we make generalization of both the injectivity and the vanishing in the real case for real Hodge modules and $\R$-divisors over projective manifolds, and hence in the rational case for rational Hodge modules and $\Q$-divisors as well. 

Inspired by Esnault-Viehweg's beaustiful ideas \cite{EV}, we use the following key observation: 
{\bf Hodge modules twisted by rank-one unitary representations should be considered for the purpose of injectivity and vanishing.}  

The other key technical tools that we are using are V-filtrations (the $\R$-indexed ones) of the Deligne meromorphic extensions, which contains exactly the same information as the multi-indexed Deligne lattices (see \S2.1 and Proposition \ref{prop:vfDe}), real Hodge modules in the normal crossing case in the sense of Saito \cite{Sai90a} and the theory of tame harmonic bundles in the sense of Simpson and Mochizuki \cite{Sim90, Moc06, Moc09}. 
   
\subsection{Main results}Now we state the main results. We consider a complex manifold $X$ with a simple normal crossing divisor $D=\sum_{i\in I} D_i$ (that is, locally $D=(z_1\cdots z_r=0)$ with coordinates $(z_1,\dots,z_n)$ and each $D_i$ is smooth). Suppose that $V=(\cV, F_\bullet, \VV_\R)$ is a polarizable real variation of Hodge structures (VHS) defined over $U=X\setminus D$. 

For every $\R$-divisor $B=\sum_{i\in I} t_i D_i$ supported on $D$, we denote by $\cV^{\lee B}$ (resp. $\cV^{\gee B}$) the upper (resp. lower) canonical Deligne lattice of index $B$. From construction (see \S2.1), $\cV^{\lee B}$ (resp. $\cV^{\geqslant B}$) are characterized by 
\[\text{eigenvalues of }\Res_{D_i}\nabla \in (t_i-1, t_i] (\text{resp. } [t_i, t_i+1))\]
for every $i$, where $\Res_{D_i}\nabla$ the residue of the flat connection on $\cV$ along $D_i$. Then $\cV^{\lee B}$ (resp. $\cV^{\gee B}$) has an induced filtration defined by 
$$F^{\lee B}_\bullet=\cV^{\lee B}\cap j_* F_\bullet \textup{(resp. } F^{\gee B}_\bullet=\cV^{\gee B}\cap j_* F_\bullet);$$ they are indeed subbundle filtrations by Lemma \ref{lem:main}; that is, every Deligne lattice of a polarizable VHS is a filtered lattice with the Hodge filtration. Denote by $S^{\lee B}(V)$ (resp. $S^{\gee B}(V)$) the first term in the filtration. When $V=(\cO_U, F_\bullet, \R_U)$ is the trivial VHS, one easily observes 
$$S^{\lee B}(V)=S^{\gee B}(V)=\cO_X(-\lfloor B\rfloor).$$

All $\cV^{\lee B}$ (resp. $\cV^{\gee B}$) together define a locally discrete discreasing and upper (resp. lower) semicontinuous multi-indexed filtration on the Deligne meromorphic extension $\cV(*D)$; see Corollary \ref{cor:semiconfil}. Local discreteness and semicontinuity of the filtration enable us to give the following definition of jumping divisors for the upper (resp. lower) canonical Deligne lattice analogous to jumping numbers for multiplier ideals. 
\begin{de}
We define $\R$-divisors 
$D_u=D_u(\cV)=\inf\{B| \cV^{\lee B}=\cV^{\lee 0}\}$ and $D_l=D_l(\cV)=\sup\{B| \cV^{\gee B}=\cV^{\gee 0}\}$.
\end{de}
For instance, when $\cV=\cO_U$, the Deligne  lattice jumps integrally and we have 
\begin{equation}\label{eq:trjump}
D_l=-D_u=D.
\end{equation}
\begin{thm}\label{thm:maininj}
With notations as above, suppose that $X$ is projective. Let $\cL$ be a line bundle so that $\cL^N\simeq \cO_X(D')$ for an effective divisor $D'$ supported on $D$ and $N\in \N$.  Then we have
\begin{enumerate}
\item{if $\frac{1}{N}D'\le D_l$, then for every effective divisor $E$ supported on $\supp(D')$ the natural map 
\[H^i(X, S^{\lee0}(V)\otimes\omega_X\otimes \cL)\to H^i(X, S^{\lee0}(V)\otimes\omega_X\otimes \cL(E))\]
is injective;}\label{item:inj1}
\item{if $\frac{1}{N}D'+D_u<0$, then for every effective divisor $E$ supported on $D$ the natural map 
\[H^i(X, S^{\gee -D}(V)\otimes\omega_X\otimes \cL^{-1})\to H^i(X, S^{\gee -D}(V)\otimes\omega_X\otimes \cL^{-1}(E))\]
is injective.}\label{item:inj2}
\end{enumerate}
\end{thm}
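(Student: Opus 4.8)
The plan is to run the Esnault--Viehweg argument for logarithmic de Rham complexes, in the Hodge module formulation of \cite{Wu15}, with the auxiliary line bundle $\cL$ absorbed into a rank-one unitary twist of $V$ --- this is the content of the key observation highlighted in the introduction.

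\emph{Step 1 (the unitary twist).} Since $\cL^N\simeq\cO_X(D')$ with $D'=\sum_ia_iD_i$ effective and supported on $D$, the bundle $\cL|_U$ is $N$-torsion in $\mathrm{Pic}(U)$; fixing a trivialisation of $\cL^N|_U$ and taking an $N$-th root gives a flat unitary line bundle $(L_\rho,\nabla_\rho)$ on $U$ whose monodromy around $D_i$ is an $N$-th root of unity determined by $a_i$. A rank-one unitary local system underlies a polarisable variation of Hodge structure of type $(0,0)$ (and, together with its conjugate, a real one), so $V\otimes L_\rho$ is again a polarisable VHS on $U$, to which everything in \S2.1 applies. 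Using \propositionref{prop:vfDe} together with \lemmaref{lem:main}, the Deligne lattices of $V\otimes L_\rho$ and their induced Hodge filtrations are those of $V$ with the multi-index shifted by $\tfrac{1}{N}D'$ and tensored by $\cL^{-1}$, up to an explicit integral divisor. The hypothesis $\tfrac{1}{N}D'\le D_l$ says exactly that this shift crosses no jump of the canonical Deligne lattice of $V$ (nor of its induced Hodge filtration, by \lemmaref{lem:main}); it gives canonical identifications, up to a harmless integral twist,
\[
S^{\le0}(V)\otimes\omega_X\otimes\cL\;\simeq\;\omega_X\otimes S^{\le0}(V\otimes L_\rho),\qquad
S^{\le0}(V)\otimes\omega_X\otimes\cL(E)\;\simeq\;\omega_X\otimes S^{\le-E}(V\otimes L_\rho),
\]
where $\cV^{\le-E}(V\otimes L_\rho)=\cV^{\le0}(V\otimes L_\rho)\otimes\cO_X(E)$ is the Deligne lattice enlarged along $E$.

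\emph{Step 2 (the Esnault--Viehweg square).} Let $M=j_{!*}(V\otimes L_\rho)$ be the intermediate extension to $X$. In the normal crossing case (Saito \cite{Sai90a}) its filtered de Rham complex is the filtered logarithmic de Rham complex $\DR_{\log D}\bigl(\cV^{\le0}(V\otimes L_\rho)\bigr)$, and its lowest Hodge piece is the single sheaf $\omega_X\otimes S^{\le0}(V\otimes L_\rho)$ in degree $0$; let $M^E$ be the analogously filtered complex $\DR_{\log D}\bigl(\cV^{\le-E}(V\otimes L_\rho)\bigr)$, with lowest Hodge piece $\omega_X\otimes S^{\le-E}(V\otimes L_\rho)$, together with the evident inclusion of filtered complexes $\DR(M)\hookrightarrow M^E$. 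By Saito's degeneration of the Hodge-to-de Rham spectral sequence on the projective $X$, the map from the lowest Hodge piece $H^i\bigl(X,\omega_X\otimes S^{\le0}(V\otimes L_\rho)\bigr)\to\hyp^i(X,\DR(M))$ is injective. Next, because $E$ is effective and supported on $\supp(D')$ and $\tfrac{1}{N}D'\le D_l$, on each graded piece $\gr^F$ the cokernel of $\DR(M)\hookrightarrow M^E$ is supported on $E$ and carries an invertible residue operator, hence is acyclic; so $\DR(M)\hookrightarrow M^E$ is a filtered quasi-isomorphism, $M^E$ also satisfies $E_1$-degeneration, $\hyp^i(X,\DR(M))\xrightarrow{\ \sim\ }\hyp^i(X,M^E)$, and $H^i\bigl(X,\omega_X\otimes S^{\le-E}(V\otimes L_\rho)\bigr)\to\hyp^i(X,M^E)$ is injective as well. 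Feeding this into the commuting square
\[
\begin{CD}
H^i\bigl(X,\omega_X\otimes S^{\le0}(V\otimes L_\rho)\bigr) @>>> \hyp^i(X,\DR(M)) \\
@VVV @VV{\cong}V \\
H^i\bigl(X,\omega_X\otimes S^{\le-E}(V\otimes L_\rho)\bigr) @>>> \hyp^i(X,M^E)
\end{CD}
\]
forces the left vertical arrow to be injective, which by Step 1 is the injectivity of \eqref{item:inj1}.

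\emph{Step 3 (the second statement, and the main obstacle).} Part \eqref{item:inj2} is obtained in the same way with $\cL^{-1}$ in the role of $\cL$, the lower canonical lattices $\cV^{\ge\bullet}$ and the jumping divisor $D_u$ in place of $\cV^{\le\bullet}$ and $D_l$, and $S^{\ge-D}(V)$ --- the lowest Hodge piece of the intermediate extension formed with the opposite residue normalisation --- in place of $S^{\le0}(V)$; the hypothesis $\tfrac{1}{N}D'+D_u<0$ is the exact analogue of $\tfrac{1}{N}D'\le D_l$. Alternatively one can deduce \eqref{item:inj2} from \eqref{item:inj1} applied to $V^\vee$ via Serre and Verdier duality, once $D_u,D_l$ and $S^{\le0},S^{\ge-D}$ are matched under $V\mapsto V^\vee$. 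I expect the main obstacle to be the lattice-and-filtration bookkeeping underlying Steps 1 and 2: verifying, in the normal crossing case, that under the unitary twist $V\mapsto V\otimes L_\rho$ and the enlargement along $E$ the induced filtrations $F^{\le B}_\bullet=\cV^{\le B}\cap j_*F_\bullet$ stay subbundle filtrations with precisely the stated first terms, with no jump secretly absorbed --- this is where $\tfrac{1}{N}D'\le D_l$ (resp.\ $\tfrac{1}{N}D'+D_u<0$) and $\supp(E)\subseteq\supp(D')$ (resp.\ $\supp(E)\subseteq\supp(D)$) enter --- and that the cokernel of $\DR(M)\hookrightarrow M^E$ is acyclic on each $\gr^F$ so that Saito's degeneration transfers to $M^E$. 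The technical engine for this is the theory of tame harmonic bundles of Simpson and Mochizuki \cite{Sim90,Moc06,Moc09}, which governs the Deligne lattices and the behaviour of the Hodge metric near $D$ and underlies the normal crossing Hodge module structure of \cite{Sai90a}.
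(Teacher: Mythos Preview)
Your overall strategy---absorb $\cL$ into a rank-one unitary twist of $V$, then run the Esnault--Viehweg factorisation argument using $E_1$-degeneration for a mixed Hodge module on $X$---is exactly the paper's strategy. The difference, and the one genuine gap, is in the choice of Hodge module.

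You take $M=j_{!*}(V\otimes L_\rho)$ and assert that its filtered de Rham complex equals the filtered logarithmic complex $\DR_{\log D}\bigl(\cV^{\lee 0}(V\otimes L_\rho)\bigr)$. That identification is not correct in general: by \propositionref{prop:maincomp} and \propositionref{prop:intermex}, the logarithmic de Rham complex of a given Deligne lattice computes one of the \emph{mixed} extensions ${j_{1_J}}_*{j_{2_J}}_!$, not the minimal extension $j_{!*}$. Consequently your inclusion $\DR(M)\hookrightarrow M^E$ and the commuting square built on it are not well-defined as stated.

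The paper avoids this by choosing the mixed Hodge module correctly. It sets $J=\{i:a_i\neq 0\}=\supp(D')$ and observes that the hypothesis $\tfrac{1}{N}D'\le D_l$ forces
\[
(\cV\otimes\cL_U)^{(-D_J)^{\gee}+(0\cdot D_{I\setminus J})^{\lee}}=\cV^{\lee 0}\otimes\cL,
\]
so $\bigl(\Dmod_X\otimes_{V^0_D\Dmod_X}(\cV^{\lee 0}\otimes\cL),F_\bullet\bigr)$ underlies ${j_{1_J}}_*{j_{2_J}}_!V_{\cL_U}$. Then, rather than comparing two logarithmic complexes, the paper invokes \propositionref{prop:shiftind}(5): because the residues along the components of $\supp(D')$ are nonzero, the $\Dmod$-module generated by $\cV^{\lee 0}\otimes\cL$ equals the one generated by $\cV^{\lee 0}\otimes\cL(E)$. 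Thus both $S^{\lee 0}(V)\otimes\omega_X\otimes\cL$ and $S^{\lee 0}(V)\otimes\omega_X\otimes\cL(E)$ sit inside the \emph{same} filtered de Rham complex, with the first being its lowest Hodge piece; $E_1$-degeneration (\lemmaref{lem:H-DR}) then gives injectivity into hypercohomology, factoring through the middle term. Your ``invertible residue $\Rightarrow$ acyclic cokernel'' step is morally the same phenomenon as \propositionref{prop:shiftind}(5), but the paper's formulation sidesteps the need to compare two distinct complexes.

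For part \eqref{item:inj2} the paper does the parallel argument with $J=\emptyset$ and the twist $V^{-1}_{\cL_U}$; no duality reduction is used. Finally, your anticipated ``main obstacle'' (the subbundle property of $F_\bullet^{\lee B}$ via tame harmonic bundles) is already packaged in \lemmaref{lem:main} and is not re-proved here; the proof of \theoremref{thm:maininj} itself is short once Propositions~\ref{prop:shiftind}, \ref{prop:intermex}, \ref{prop:maincomp} and \lemmaref{lem:H-DR} are in place.
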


After taking $V$ the trivial VHS, in particular, the above injectivity gives a different proof of the fundamental injectivity of Esnault and Viehweg \cite[\S5]{EV}, without using positive characteristic methods of Deligne and Illusie.
\begin{cor}[Esnault and Viehweg]\label{cor:injtrivial}
With notations as above, suppose that $X$ is projective. Let $\cL$ be a line bundle so that $\cL^N\simeq \cO_X(D')$ for an effective divisor $D'$ supported on $D$ and $N\in \N$.  Then we have
\begin{enumerate}
\item{if $\frac{1}{N}D'\le D$, then for every effective divisor $E$ supported on $\supp(D')$ the natural map 
\[H^i(X, \omega_X\otimes \cL)\to H^i(X, \omega_X\otimes \cL(E))\]
is injective;}
\item{if $\frac{1}{N}D'<D$, then for every effective divisor $E$ supported on $D$ the natural map 
\[H^i(X, \omega_X\otimes \cL^{-1}(D))\to H^i(X, \omega_X\otimes \cL^{-1}(D+E))\]
is injective.}
\end{enumerate}
\end{cor}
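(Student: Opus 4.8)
The plan is to obtain \corollaryref{cor:injtrivial} as the special case of \theoremref{thm:maininj} in which $V$ is the trivial polarizable real VHS $V_{0}=(\cO_U,F_\bullet,\R_U)$ on $U=X\setminus D$. This $V_0$ is visibly polarizable (the polarization being the tautological pairing on $\cO_U$), so all hypotheses of \theoremref{thm:maininj} hold for it, and the only work is to identify the four quantities $S^{\lee 0}(V_0)$, $S^{\gee -D}(V_0)$, $D_l(\cV)$, $D_u(\cV)$ appearing there.

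First I would recall the two computations already recorded in the text preceding \theoremref{thm:maininj}: for the trivial VHS one has $S^{\lee B}(V_0)=S^{\gee B}(V_0)=\cO_X(-\floor{B})$ for every $\R$-divisor $B$ supported on $D$, and the Deligne lattice of $\cO_U$ jumps integrally, giving $D_l(\cV)=-D_u(\cV)=D$ as in \eqref{eq:trjump}. In particular $S^{\lee 0}(V_0)=\cO_X(-\floor{0})=\cO_X$; and since $D$ is a reduced integral divisor we have $\floor{-D}=-D$, hence $S^{\gee -D}(V_0)=\cO_X(D)$.

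With these identifications in hand, part (1) is immediate: the hypothesis $\frac1N D'\le D$ is exactly the hypothesis $\frac1N D'\le D_l(\cV)$ of \theoremref{thm:maininj}(1), whose conclusion, after substituting $S^{\lee 0}(V_0)=\cO_X$, is the asserted injectivity of $H^i(X,\omega_X\otimes\cL)\to H^i(X,\omega_X\otimes\cL(E))$. Likewise, for part (2) the hypothesis $\frac1N D'<D$ is exactly the hypothesis $\frac1N D'+D_u(\cV)<0$ of \theoremref{thm:maininj}(2); its conclusion reads
\[
H^i(X,S^{\gee -D}(V_0)\otimes\omega_X\otimes\cL^{-1})\to H^i(X,S^{\gee -D}(V_0)\otimes\omega_X\otimes\cL^{-1}(E)),
\]
and substituting $S^{\gee -D}(V_0)=\cO_X(D)$ together with $\cO_X(D)\otimes\cL^{-1}=\cL^{-1}(D)$ and $\cO_X(D)\otimes\cL^{-1}(E)=\cL^{-1}(D+E)$ yields precisely the stated map. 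Thus there is no genuine obstacle here beyond unwinding definitions; the only points deserving an explicit word are that the trivial VHS is polarizable and that $\floor{-D}=-D$ because $D$ is reduced.
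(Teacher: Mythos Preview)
Your proposal is correct and is exactly the approach the paper takes: the corollary is presented there as the specialization of \theoremref{thm:maininj} to the trivial VHS, with the identifications $S^{\lee 0}(V_0)=\cO_X$, $S^{\gee -D}(V_0)=\cO_X(D)$, and $D_l=-D_u=D$ from \eqref{eq:trjump} being precisely the ones you spell out. There is nothing to add.
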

%The first statement in the above theorem reflects the $E_1$ degeneration of the Hodge-de Rham spectral sequence for the minimal extension of the twist local system, while the second one reflects that for the maximal extension. See ....

Theorem \ref{thm:maininj}  \eqref{item:inj1} implies the following injectivity for nef and big $\R$-divisors, which fully generalizes the injectivity \cite[Thm. 1.4]{Wu15}. 
\begin{thm}\label{thm:inj}
Assume that $X$ is a projective manifold. Let $L$ be a divisor on $X$. If $L- B$ is nef and big for some $\R$-divisor $B$ supported on $D$, then for every effective divisor $E$ the natural morphism 
\[H^i(X, S^{\lee B}(V)\otimes\omega_X(L))\to H^i(X, S^{\lee B}(V)\otimes\omega_X(L+E))\]
is injective.
\end{thm}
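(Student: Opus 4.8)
The plan is to deduce Theorem~\ref{thm:inj} from Theorem~\ref{thm:maininj}\eqref{item:inj1}, following the pattern by which, classically, the logarithmic injectivity of Esnault--Viehweg yields Kawamata--Viehweg vanishing.

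First I would replace $B$ by a nearby rational boundary. By the local discreteness and upper semicontinuity of the filtration $\{\cV^{\le C}\}_C$ (Corollary~\ref{cor:semiconfil}) there is a $\Q$-divisor $B'\ge B$, arbitrarily close to $B$, with $\cV^{\le B}=\cV^{\le B'}$, hence $S^{\le B}(V)=S^{\le B'}(V)$. Put $M:=L-B'$; since the big cone is open $M$ is big, and --- crucially --- since $L-B$ is \emph{nef}, its asymptotic order of vanishing along every prime divisor is $0$, so by continuity of these invariants on the big cone one may, for $B'$ close enough to $B$ and any prescribed $\varepsilon>0$, write $M\sim_{\Q}A+\Gamma$ with $A$ ample $\Q$-Cartier and $\Gamma$ effective $\Q$-Cartier of coefficient $<\varepsilon$ along every component of $D$ (and, one should check, $\Gamma$ mild enough in the sense recorded below).

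Next I would pass to a log resolution $\mu\colon X'\to X$ of $(X,D+\Gamma)$, so that, with $A':=\mu^*A-\delta E_\mu$ ample for a small effective $\mu$-exceptional $E_\mu$ and $N$ large and divisible, $N\mu^*M\sim H+N(\mu^*\Gamma+\delta E_\mu)$ for a general smooth very ample $H$, with $\mathbf D:=H+\mathrm{Exc}(\mu)+\mu^{-1}(\Gamma)_{\mathrm{red}}$ simple normal crossing and containing $\mu^{-1}(D)_{\mathrm{red}}$. Since $H$ is ample on $X'$, every effective divisor on $X'$ is dominated by a multiple of $H$, so it suffices to prove the asserted injectivity on $X'$ for $E$ a large multiple of $H$, which is supported on $\mathbf D$. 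Now let $W:=\mu^*V$ (a polarizable real VHS on $X'\setminus\mu^{-1}(D)$) and let $\rho$ be the rank-one unitary local system on $X\setminus D$ with monodromy $e^{2\pi\sqrt{-1}\,t'_i}$ around $D_i$; pulled back to $X'$ it extends trivially over the components of $\mathbf D$ not over $D$. Absorbing the fractional part $\{B'\}$ of $B'$ into the twist $\rho$ and the integral part into a line-bundle twist --- concretely, using $\cV^{\le B'}=\cV^{\le\{B'\}}(-\lfloor B'\rfloor)$ and $(\cV\otimes\rho)^{\le 0}=\cV^{\le\{B'\}}\otimes\cP$ with $\cP$ the rank-one lattice of $\rho$ of residue $-\{t'_i\}$ along $D_i$ and $\cP^{\otimes N}\simeq\cO_X(N\{B'\})$ (as $N\{B'\}$ is integral) --- one writes $S^{\le B'}(V)\otimes\omega_X(L)\cong S^{\le 0}(V\otimes\rho)\otimes\omega_X\otimes\cL''$ with $(\cL'')^{N}\simeq\cO_X(NM)$. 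Pulling back along $\mu$, and using the birational compatibility of $\omega_X\otimes S^{\le0}(\cdot)$ (Lemma~\ref{lem:main} and strictness of Hodge-module direct images), one gets $S^{\le B}(V)\otimes\omega_X(L)\cong\mu_*\bigl(S^{\le0}(W\otimes\rho)\otimes\omega_{X'}\otimes\cL'\bigr)$ with $R^{>0}\mu_*(\,\cdot\,)=0$, where $\cL'$ is a line bundle with $(\cL')^{N}\simeq\cO_{X'}(D')$ and $D':=H+N(\mu^*\Gamma+\delta E_\mu)$ is \emph{effective} and supported on $\mathbf D$. Finally $\tfrac1N D'=\tfrac1N H+\mu^*\Gamma+\delta E_\mu\le D_l(W\otimes\rho)$: along $H$ and along the components where $W\otimes\rho$ has trivial monodromy, $D_l$ has coefficient $1$ there (cf.\ \eqref{eq:trjump}) while $\tfrac1N D'$ has coefficient $<1$; along the components lying over $D$, $\tfrac1N D'$ has coefficient $<\varepsilon<D_l(W\otimes\rho)$, $\varepsilon$ having been chosen below the positive coefficients of $D_l$ there.

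Applying Theorem~\ref{thm:maininj}\eqref{item:inj1} on $X'$ to $(W\otimes\rho,\cL',N,D')$ --- directly, since this variation and its twists by $\cO_{X'}(mH)$ fall within the scope of that theorem, or after replacing $\rho$ by $\rho\oplus\bar\rho$ and splitting cohomology --- gives the injectivity of $H^i(X',S^{\le0}(W\otimes\rho)\otimes\omega_{X'}\otimes\cL')\to H^i(X',S^{\le0}(W\otimes\rho)\otimes\omega_{X'}\otimes\cL'(mH))$ for all $m\ge0$. Taking $R\mu_*$ and invoking Grauert--Riemenschneider vanishing together with the identifications above, and then using the domination of an arbitrary effective $E$ on $X$ by a multiple of $H$, we conclude that $H^i(X,S^{\le B}(V)\otimes\omega_X(L))\to H^i(X,S^{\le B}(V)\otimes\omega_X(L+E))$ is injective. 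I expect the main obstacle to be the third paragraph, and within it two points: (i) choosing the Kodaira decomposition $M\sim_{\Q}A+\Gamma$ with $\Gamma$ simultaneously of small coefficients \emph{and} mild enough that, after pulling back to the log resolution, $\mathrm{mult}_{E}\mu^*\Gamma$ stays below the relevant coefficient of $D_l(W\otimes\rho)$ for every exceptional $E$ --- this is exactly where the nefness (not merely bigness) of $L-B$ must be used; and (ii) the careful bookkeeping of how the canonical Deligne lattices, the Hodge filtration, and the jumping divisor $D_l$ transform under the unitary twist $W\mapsto W\otimes\rho$ and under the birational morphism $\mu$.
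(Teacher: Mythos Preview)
Your overall architecture matches the paper's: perturb $B$ to a $\Q$-divisor using upper semicontinuity of $S^{\le B}$, pass to a log resolution, twist so as to land in the setting of Theorem~\ref{thm:maininj}\eqref{item:inj1}, obtain the injectivity upstairs, then push down using the birational invariance of $S^{\le \bullet}(V)\otimes\omega$ (Lemma~\ref{lem:birSV}) together with the local vanishing $R^{>0}\mu_*(\cdot)=0$. So the plan is sound.

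Where you diverge from the paper is in the bookkeeping of the twist, and this is precisely what generates your two self-identified obstacles. You split the twist into a separate rank-one unitary $\rho$ (carrying $\{B'\}$) and a line bundle $\cL'$, and then try to verify the literal hypothesis $\tfrac1N D'\le D_l(W\otimes\rho)$. The paper does something simpler: it takes $\cL=\cO_{X'}(\mu^*L)$ in one stroke, so that $\cL^N\simeq\cO_{X'}(N\mu^*B+G)$ with $G$ effective and $\supp(\mu^*E)\subseteq\supp(G)$, and observes directly from the residue shift that
\[
S^{\le \mu^*B}(V)\otimes\cL \;=\; S^{\le 0}(V^1_{\cL_U}).
\]
With this identification in hand, the paper invokes not Theorem~\ref{thm:maininj} as a black box but its \emph{proof}: what is actually used there is that the residue of the twisted connection along each component of $\supp(G)$ is nonzero, so that Proposition~\ref{prop:shiftind} applies and the $E_1$-degeneration (Lemma~\ref{lem:H-DR}) yields the injection. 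Nonvanishing of the residue along $\supp(G)$ is arranged simply by the choice of $N$. This completely sidesteps your obstacle (i): there is no need to control $\mathrm{mult}_E(\mu^*\Gamma)$ against the coefficients of $D_l$ on exceptional divisors, nor to track how $D_l$ transforms under $\rho$ and $\mu$ (your obstacle (ii)).

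For the record, your obstacle (i) is resolvable even within your framework: since $L-B$ is nef and big, write $L-B\sim_{\Q}A_0+F_0$ once, and then use $L-B=(1-\epsilon)(L-B)+\epsilon A_0+\epsilon F_0$ with $(1-\epsilon)(L-B)+\epsilon A_0$ ample; the effective part is $\epsilon F_0$ with $F_0$ \emph{fixed}, so after fixing a log resolution of $(X,D+F_0+E)$ one can shrink $\epsilon$ to force all coefficients of $\mu^*(\epsilon F_0)$ below any prescribed bound. But the paper's route via the single twist $\cL=\cO_{X'}(\mu^*L)$ and the residue identity is cleaner and avoids this detour entirely.
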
 
When $V$ is the trival VHS, since $S^{\lee B}(V)=\cO_X(-\lfloor B\rfloor)$, the above theorem specializes to the injectivity theorem of Koll\'ar and Esnault-Viehweg; see for instance \cite[Cor. 5.12(b)]{EV}. More generally, when $V$ is the $(d+i)$-th geometric VHS given by a family $f$ (that is the VHS parametrizing the $(d+i)$-th cohomology of smooth fibers), we know that from construction when $B$ is effective
$$S^{\lee B}(V)\subseteq S^{\lee 0}(V)=R^if_*\omega_{Y/X},$$ where $f: Y\to X$ is a surjective projective morphism branched along $D$; cf. \cite{Kol86b} and \cite{SaitoKC}. In this case, after taking $B=0$, it particularly implies Koll\'ar's injectivity for higher direct images of dualizing sheaves \cite[Thm. 2.2]{Kol86a}. In other words, in the geometric case it is also an $\R$-divisor generalization of Koll\'ar's injectivity for projective families branched along normal crossing divisors.  

Taking $E$ ample, the following Kawamata-Viehweg type vanishing follows immediately, which makes full generalization of the vanishing in \cite{Wu15} and \cite{Suh15}; see also \cite[Thm.1.1.6]{WuThesis} for $\Q$-Hodge modules from an alternative point of view using Kawamata coverings. 
 \begin{thm}\label{thm:mainvan}
With notations as in Theorem \ref{thm:inj}, if $L- B$ is nef and big for some $\R$-divisor $B$ supported on $D$, then 
\[H^i(X, S^{\lee B}(V)\otimes\omega_X(L))=0\]
for $i>0$.
\end{thm}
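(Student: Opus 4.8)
The plan is to obtain Theorem~\ref{thm:mainvan} as a formal consequence of the injectivity in Theorem~\ref{thm:inj} combined with Serre's vanishing theorem, precisely as the remark preceding the statement indicates. The only facts needed beyond Theorem~\ref{thm:inj} itself are that $S^{\lee B}(V)$ is a coherent sheaf on $X$ --- indeed a locally free subsheaf of the Deligne lattice $\cV^{\lee B}$, by Lemma~\ref{lem:main} --- and that on the projective manifold $X$ there exist effective divisors whose associated line bundles are arbitrarily positive.

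First I would fix an ample divisor $H$ on $X$ and put $\mathcal{G}:=S^{\lee B}(V)\otimes\omega_X(L)$, a coherent sheaf. By Serre vanishing there is an integer $m_0$ with $H^i(X,\mathcal{G}\otimes\mathcal{O}_X(mH))=0$ for all $i>0$ and all $m\ge m_0$; enlarging $m_0$ if necessary, we may also assume $|mH|\neq\emptyset$, so for some such $m$ we can choose an effective divisor $E\sim mH$. Then $\mathcal{O}_X(E)\cong\mathcal{O}_X(mH)$, and hence
\[H^i\bigl(X,\,S^{\lee B}(V)\otimes\omega_X(L+E)\bigr)=H^i\bigl(X,\,\mathcal{G}\otimes\mathcal{O}_X(mH)\bigr)=0\qquad\text{for all }i>0.\]

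Next I would feed this effective divisor $E$ into Theorem~\ref{thm:inj}: the natural morphism
\[H^i\bigl(X,\,S^{\lee B}(V)\otimes\omega_X(L)\bigr)\longrightarrow H^i\bigl(X,\,S^{\lee B}(V)\otimes\omega_X(L+E)\bigr)\]
is injective for every $i$. Since the target vanishes for $i>0$ by the previous step, the source vanishes as well, which is exactly the assertion $H^i(X,S^{\lee B}(V)\otimes\omega_X(L))=0$ for $i>0$.

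I do not expect a genuine obstacle in this argument: all of the substantive work --- proving the injectivity for nef and big $\R$-divisors through Hodge modules twisted by rank-one unitary representations, $\R$-indexed $V$-filtrations, and the degeneration of the Hodge-to-de Rham spectral sequence --- is already contained in Theorem~\ref{thm:inj}. The single point deserving care is that the auxiliary divisor $E$ passed to Theorem~\ref{thm:inj} be a bona fide effective divisor, with no restriction on its support; but this is exactly the generality in which Theorem~\ref{thm:inj} is stated, so nothing further is required.
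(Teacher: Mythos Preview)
Your argument is correct and matches the paper's own approach exactly: as the paper indicates just before the statement, Theorem~\ref{thm:mainvan} follows immediately from Theorem~\ref{thm:inj} by taking $E$ sufficiently ample so that Serre vanishing kills the target cohomology. Your care in noting that Theorem~\ref{thm:inj} places no support restriction on $E$ is precisely the point that makes this one-line deduction go through.
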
 

Similar to injectivity, when $V$ is trivial, the above vanishing theorem specially gives Kawamata-Viehweg vanishing for $\R$-divisors \cite[Cor. 5.12(b)]{EV}, while in the relative case, it generalizes Koll\'ar vanishing  \cite[Thm. 2.1(iii)]{Kol86a} for projective families branched along normal crossing divisors. 
%See also Theorem \ref{thm:vangen} in general. 

We also obtain a relative vanishing under birational morphisms using the strictness of the direct images of mixed Hodge modules, which is a natural generalization of relative Kawamata-Viehweg vanishing in the log canonical case; see Theorem \ref{prop:locvanlc}.

As an application of Theorem \ref{thm:mainvan}, we deduce the following Fujita-type global generation result. 
\begin{thm}[=Theorem \ref{thm:fgg}]
Let $X$ be a smooth projective variety of dimension $n$ with $D$ a simple normal crossing divisor and $V$ a polarizable real VHS on $X\setminus D$, and let $L$ be an ample divisor on $X$ and a point $x\in X$. Assume that for every klt pair $(X, B_0)$, there exists an effective $\Q$-divisor $B$ on $X$ satisfying the following conditions:
\begin{enumerate} [label=(\roman*)]
\item{$B\equiv \lambda L$ (numerical equivalence) for some $0<\lambda<1$;}
\item{$(X, B+B_0)$ is log canonical at $x$;}
\item{$\{x\}$ is a log canonical center of $(X, B+B_0)$.}
\end{enumerate}
Then the natural morphism 
\[H^0(X, S^{\lee 0}(V)\otimes \omega_X(L))\rarr S^{\lee 0}(V)\otimes \omega_X(L)|_{\{x\}}\]
is surjective. 
\end{thm}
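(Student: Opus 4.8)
The plan is to follow the classical strategy of Kawamata and Angehrn--Siu for Fujita-type freeness, with the multiplier ideal replaced by the lowest piece of the Hodge filtration on a canonical Deligne lattice pulled back to a log resolution, and Nadel vanishing replaced by the $\R$-divisor Kawamata--Viehweg vanishing of Theorem~\ref{thm:mainvan} together with the relative vanishing of Theorem~\ref{prop:locvanlc}. Since $S^{\le 0}(V)$ is locally free, the asserted surjectivity onto the fibre follows, via the sequence $0\to\cI_Z\to\cO_X\to\cO_Z\to 0$ tensored with the locally free sheaf $S^{\le 0}(V)\otimes\omega_X(L)$, from the vanishing $H^1\bigl(X,\,S^{\le 0}(V)\otimes\omega_X(L)\otimes\cI_Z\bigr)=0$ for a subscheme $Z\subset X$ which is reduced and equal to $\{x\}$ in a neighbourhood of $x$; here it is used that such a $\{x\}$ is isolated in $Z$, so that $H^0(\cO_Z\otimes S^{\le 0}(V)\otimes\omega_X(L))$ already surjects onto the fibre at $x$.

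First I would produce the auxiliary divisor. Applying the hypothesis to the klt pair $(X,0)$ gives an effective $\Q$-divisor whose class is a fraction of $L$, log canonical at $x$ and having $\{x\}$ as a log canonical centre; iterating the hypothesis on the klt pairs obtained by slightly scaling down and perturbing, together with the standard tie-breaking/concentration technique (cutting the log canonical centre down to the point and adding a small general effective divisor), yields an effective $\Q$-divisor $B$ with $B\equiv\lambda' L$, $0<\lambda'<1$, such that $\supp B\supseteq D$, the pair $(X,B)$ is log canonical, is klt away from $x$, and has non-klt locus exactly $\{x\}$. The universally quantified form of the hypothesis is precisely what makes this iteration go through; keeping the numerical class a fraction of $L$ is a routine bookkeeping point.

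Next I would choose a log resolution $\mu\colon X'\to X$ of $(X,B)$ that also resolves $D$, is an isomorphism over the generic point of $X$, and has $D'\coloneqq(\mu^{-1}D)_{\mathrm{red}}\cup\mathrm{Exc}(\mu)$ simple normal crossing; after tie-breaking there is a unique prime component $F\subset D'$ with discrepancy $-1$ for $(X,B)$, and $\mu(F)=\{x\}$. Let $V'$ be the pullback of $V$ to $X'\setminus D'$, and write $K_{X'}=\mu^*(K_X+B)+\sum_i a_iE_i$ with $a_i\ge-1$, $E_0=F$, $a_0=-1$. I would then take $B'$ to be the $\R$-divisor supported on $D'$ assembled from the fractional and rounded contributions of $\mu^*B$ and of the $a_i$, and $L'\coloneqq\mu^*L-G$ with $G$ effective and $\mu$-exceptional, the two chosen so that $\mu_*\bigl(S^{\le B'}(V')\otimes\omega_{X'}(L')\bigr)$ becomes the ideal-twisted sheaf of the reduction step. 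Since $\mu^*(L-B)=\mu^*\bigl((1-\lambda')L\bigr)$ is nef and big, a routine negativity argument — perturbing by a small $\mu$-ample class if necessary — shows $L'-B'$ is nef and big, so Theorem~\ref{thm:mainvan} on $X'$ (equivalently, its relative counterpart Theorem~\ref{prop:locvanlc}) gives $H^i\bigl(X',\,S^{\le B'}(V')\otimes\omega_{X'}(L')\bigr)=0$ for $i>0$.

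Finally I would descend to $X$. By Theorem~\ref{prop:locvanlc}, $R^j\mu_*\bigl(S^{\le B'}(V')\otimes\omega_{X'}(L')\bigr)=0$ for $j>0$, so by the Leray spectral sequence the vanishing on $X'$ descends once one knows $\mu_*\bigl(S^{\le B'}(V')\otimes\omega_{X'}(L')\bigr)\simeq S^{\le 0}(V)\otimes\omega_X(L)\otimes\cI_Z$ with $Z$ reduced and equal to $\{x\}$ near $x$ — which then yields exactly the vanishing isolated in the first paragraph, and hence the theorem. I expect this identification to be the main obstacle: one must verify that pushing forward the lowest Hodge piece along the birational morphism $\mu$ — using the strictness of direct images of mixed Hodge modules that also underlies Theorem~\ref{prop:locvanlc} — recovers $S^{\le 0}(V)\otimes\omega_X$ twisted by precisely the ideal sheaf of the chosen log canonical centre, compatibly with the fibre at $x$, which is what forces the exact calibration of $B'$ and $G$. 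This is the Hodge-module analogue of the birational behaviour of multiplier/adjoint ideals and of the non-klt ideal sheaf; the remaining ingredients (the tie-breaking bookkeeping and the nef-and-big verification on $X'$) are standard.
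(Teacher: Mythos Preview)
Your outline follows the classical Nadel/multiplier-ideal argument, but the step you yourself flag as ``the main obstacle'' is a genuine gap, not just a technicality. For a nontrivial VHS $V$ the pushforward $\mu_*\bigl(S^{\le B'}(V')\otimes\omega_{X'}(L')\bigr)$ has no reason to factor as $S^{\le 0}(V)\otimes\omega_X(L)\otimes\cI_Z$ for an ideal sheaf $\cI_Z$. The Deligne lattice $S^{\le B'}$ mixes the eigenvalue data of the monodromy of $V$ along the components of $D$ through $x$ with the discrepancies of your auxiliary $B$; these do not separate into a ``VHS factor'' times a ``multiplier ideal factor''. Lemma~\ref{lem:birSV} only tells you $\mu_*(S^{\le\mu^*B}(V)\otimes\omega_{X'})=S^{\le B}(V)\otimes\omega_X$, which is not of the desired form. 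Relatedly, you invoke the hypothesis only for $B_0=0$ (your later ``iterating'' is ordinary tie-breaking), so you never use the quantifier ``for every klt pair $(X,B_0)$''---that quantifier is there precisely to absorb the VHS eigenvalue contribution.

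The paper's proof circumvents the nonexistence of such an ideal-sheaf factorization by working section by section. It first uses the local eigenvalue decomposition $S^{\le 0}(V)=\bigoplus_\alpha S^{\le 0}(V)_\alpha$ on a polydisk around $x$ (Lemma~\ref{lem:lid}); for each index $\alpha=(\alpha_i)$ it sets $B_{s_j}=\sum_i\alpha_iD_i$ (so $(X,-B_{s_j})$ is klt) and applies the hypothesis with $B_0=-B_{s_j}$. After tie-breaking and passing to a log resolution $\mu$ with unique log canonical place $E$ over $x$, it writes $\mu^*(B-B_{s_j})=K_{X'/X}+E+F^+-F^-$ and uses the short exact sequence obtained by restricting to $E$ rather than any ideal-sheaf sequence on $X$. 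The point of carrying $B_{s_j}$ is that $\mu^*\bigl(S^{\le 0}(V)_\alpha\bigr)$ is then visibly a summand of $S^{\le\mu^*B_{s_j}+F^+}(V)$, so the pulled-back local section survives restriction to $E$; vanishing from Theorem~\ref{thm:mainvan} (applied with index $\mu^*B$) lifts it back to a global section. If you want to salvage your approach, the missing idea is exactly this: decompose by monodromy eigenvalue near $x$ and feed each $\alpha$ into the hypothesis via $B_0=-\sum\alpha_iD_i$.
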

It is known from \cite{EL}, \cite{Kaw97} and recently \cite{YZ} that the assumption for $kL$ always holds for $k\ge n+1$ and $n\le 5$, from \cite{ASiu} for $k\ge$${n+1}\choose 2$ and $n$ arbitrary. Therefore, we obtain the following corollary.
\begin{cor}
Let $X$ be a smooth projective variety of dimension $n$ with $D$ a simple normal crossing divisor and $V$ a polarizable real VHS on $X\setminus D$. If $L$ is an ample divisor on $X$, then the locally free sheaf 
$$S^{\lee 0}(V)\otimes \omega_X(kL)$$ 
is globally generated when $k\ge n+1$ if $n\le5$, or $k\ge$$n+1\choose 2$ in general. 
\end{cor}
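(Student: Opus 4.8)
The plan is to deduce the corollary formally from Theorem~\ref{thm:fgg} together with the quoted Fujita-type results. First I would pass from global generation to a fiberwise surjectivity. Since $S^{\lee 0}(V)$ is locally free (it is the first term of a subbundle filtration, by Lemma~\ref{lem:main}), the sheaf $\cF:=S^{\lee 0}(V)\otimes\omega_X(kL)$ is coherent, and by Nakayama's lemma $\cF$ is globally generated if and only if for every point $x\in X$ the evaluation map
\[
H^0(X,\cF)\rarr \cF|_{\{x\}}
\]
is surjective. Hence it suffices to prove this surjectivity for an arbitrary fixed $x$.

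Next I would fix $x\in X$ and apply Theorem~\ref{thm:fgg} with the ample divisor $L$ there replaced by $kL$, which is again ample since $k\ge 1$. The conclusion of that theorem is exactly that $H^0(X, S^{\lee 0}(V)\otimes\omega_X(kL))\to S^{\lee 0}(V)\otimes\omega_X(kL)|_{\{x\}}$ is surjective; so the whole problem reduces to checking the hypothesis of Theorem~\ref{thm:fgg} for the pair $(kL,x)$. Concretely, one must show that for every klt pair $(X,B_0)$ there is an effective $\Q$-divisor $B$ with $B\equiv\lambda(kL)$ for some $0<\lambda<1$, with $(X,B+B_0)$ log canonical at $x$, and with $\{x\}$ a log canonical center of $(X,B+B_0)$.

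Finally, this last requirement is precisely the Fujita-type assertion that an appropriate $\Q$-boundary cuts out $\{x\}$ as a minimal log canonical center for the ample divisor $kL$. By \cite{EL}, \cite{Kaw97} and \cite{YZ} it holds whenever $k\ge n+1$ and $n\le 5$, and by \cite{ASiu} whenever $k\ge\binom{n+1}{2}$ for arbitrary $n$; these references produce, for any point $x$ and any klt boundary $B_0$, a $\Q$-divisor $B\equiv\lambda kL$ with $0<\lambda<1$ realizing $\{x\}$ as a minimal log canonical center of $(X,B+B_0)$. Substituting into Theorem~\ref{thm:fgg} gives the desired surjectivity at every $x$, and therefore the global generation of $\cF$.

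The one substantive input is this quoted Fujita-freeness-type statement; everything else is the formal Nakayama reduction and a direct application of Theorem~\ref{thm:fgg}, so I do not expect a real obstacle. The only subtlety worth checking is that the cited results genuinely provide the version carrying along an auxiliary klt pair $(X,B_0)$, rather than merely the case $B_0=0$; this is handled by running the Angehrn--Siu/Kawamata cutting-down procedure relative to $(X,B_0)$ from the outset.
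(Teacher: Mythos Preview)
Your proposal is correct and follows exactly the paper's approach: the paper states the corollary as an immediate consequence of Theorem~\ref{thm:fgg}, noting only that the hypothesis for $kL$ is supplied by \cite{EL}, \cite{Kaw97}, \cite{YZ} (for $n\le 5$) and \cite{ASiu} (in general). Your write-up simply makes explicit the Nakayama reduction and the subtlety about carrying along the auxiliary klt pair $(X,B_0)$, which the paper leaves implicit.
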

Geometrically, when $V$ is the $(d+i)$-th geometric VHS given by a family $f$ for $f: Y\to X$ being a surjective projective morphism branched along $D$, we know, as mentioned previously,
$$S^{\lee 0}(V)=R^if_*\omega_{Y/X}.$$
Hence, the above corollary in particular gives global generation for 
$$R^if_*\omega_Y\otimes \cO_X(kL),$$
which is obtained already in \cite{Kaw02} (besides the case of dimension 5) from a different point of view using the geometric Hodge metric and Kawamata coverings. 

Using Artin-Grothendieck vanishing for construtible sheaves on affine varieties and strictness of direct-images of mixed Hodge modules, we also deduce the following Nakano-type vanishing for $\R$-divisors.
\begin{thm}\label{thm:NakanoVan}
Let $X$ be a smooth projective variety and let $E=\sum E_i$ be a simple normal crossing divisor. Assume that $\Delta=\sum \alpha_i E_i$ is an $\R$-divisor supported on $E$ and $0<\alpha_i\le 1$ for each $i$ and $L$ is a divisor. Then we have
\begin{enumerate}
\item{if $L+\Delta$ is semi-ample (that is, $L+\Delta$ is $\R$-linear equivalent to a semi-ample divisor) and $X\setminus E_i$ is affine for some irreducible component $E_i$ of support of $\Delta$ with the coefficient $a_i\neq 1$, then 
\[H^p(X, \Omega^q(\log~D)(L))=0, \textup{ for } p+q>n;\]}\label{item:semi-ample-nakano}
\item{if $L+\Delta$ is ample, then 
\[H^p(X, \Omega^q(\log~D)(L))=0, \textup{ for } p+q>n.\]}\label{item:ample-nakano}
\end{enumerate}
\end{thm}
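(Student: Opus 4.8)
The plan is to realize $\bigoplus_{p+q=k}H^p(X,\Omega^q_X(\log E)(L))$ as a Hodge‑graded summand of the hypercohomology of the de Rham complex of a suitable polarizable real Hodge module on $X$, to identify that hypercohomology with the cohomology of a constructible sheaf on an affine open subset of $X$, and then to apply Artin--Grothendieck vanishing there. The baby case to keep in mind is $L=0$ with $\Delta=E$ (all $\alpha_i=1$) and $E$ ample: then $U:=X\setminus E$ is affine, $Rj_*\C_U$ is a shift of $\DR_X(\cO_X(*E))$, Deligne's $E_1$‑degeneration gives $H^m(U,\C)=\bigoplus_{p+q=m}H^q(X,\Omega^p_X(\log E))$, and Artin's theorem $H^m(U,\C)=0$ for $m>n=\dim U$ yields the statement. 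The general case needs two extra inputs: the fractional coefficients $\alpha_i<1$, handled by twisting by rank‑one unitary local systems and using the $\R$‑indexed Deligne lattices of \S2.1; and the line‑bundle twist $\cO_X(L)$, extracted from the positivity of $L+\Delta$.

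Concretely, first I would choose on $U=X\setminus E$ the rank‑one unitary local system $\LL$ with monodromy $\exp(-2\pi\sqrt{-1}\,\alpha_i)$ around $E_i$, and let $V$ be the associated polarizable real VHS of weight zero (replacing $\LL$ by $\LL\oplus\bar\LL$ if needed). By Saito's description of real Hodge modules in the normal crossing case \cite{Sai90a} together with \lemmaref{lem:main}, the filtered $D$‑module underlying $j_*(V[n])$ is the Deligne meromorphic extension $\cV(*E)$ with its Hodge filtration, and the lowest graded piece of its de Rham complex is, up to shift, $S^{\lee B}(V)\otimes\Omega^\bullet_X(\log E)$ for the index $B$ matching $\{\Delta\}$, with $S^{\lee B}(V)$ a line bundle $\cO_X(-B_0)$ and $B_0$ supported on $E$. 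To produce the twist $\cO_X(L)$ — which, $L$ being a priori unrelated to $E$, cannot arise from any Deligne lattice — I would use that $L+\Delta$ is semi‑ample: passing to a cyclic (Kawamata) cover $\pi\colon Y\to X$ adapted to a general smooth member of $|m(L+\Delta)|$ for suitable $m$, with $Y$ smooth and $\pi^{-1}(E)$ together with the ramification divisor simple normal crossing, one of the summands of $\pi_*\Omega^q_Y(\log\tilde E)$ is exactly $\Omega^q_X(\log E)(L)$; upstairs the analogue of $V$ has boundary twist supported on the new crossing divisor, so the previous construction applies on $Y$.

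Next I would invoke the strictness of the Hodge filtration under direct images of mixed Hodge modules — equivalently the $E_1$‑degeneration of the Hodge‑to‑de Rham spectral sequence of $j_*(V[n])$ (resp.\ of its lift on $Y$) — to split $\hyp^k(X,\DR_X(M))$ as $\bigoplus_p\hyp^k\!\big(X,\gr^F_p\DR_X(M)\big)$. Since $\DR_X(M)$ is a shift of $Rj_*\LL$ (resp.\ of the corresponding constructible sheaf on $Y$), this exhibits $H^p(X,\Omega^q_X(\log E)(L))$ as a direct summand of $H^{p+q}(U,\LL)$ (resp.\ of $H^{p+q}$ of the pulled‑back sheaf on $\pi^{-1}(U)$). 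Now the affineness hypothesis enters. In case \eqref{item:semi-ample-nakano}, the assumption that $X\setminus E_{i_0}$ is affine forces $E_{i_0}$ to be ample, hence $E_{i_0}+\epsilon(E-E_{i_0})$ is an ample $\R$‑divisor with support $E$ and $U=X\setminus E$ is affine as well; the requirement $\alpha_{i_0}\ne1$ guarantees that $\LL$ has nontrivial monodromy around $E_{i_0}$, which is precisely what makes the graded de Rham computation pick out $\Omega^q_X(\log E)(L)$ on the nose rather than the log sheaf of a strictly larger divisor. Artin--Grothendieck vanishing on the $n$‑dimensional affine variety $U$ (and on $\pi^{-1}(U)$, which is finite over $U$ hence affine) gives $H^m(U,\LL)=0$ for $m>n$, and therefore $H^p(X,\Omega^q_X(\log E)(L))=0$ for $p+q>n$. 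In case \eqref{item:ample-nakano}, the ampleness of $L+\Delta$ provides the affine open directly: one adjoins a general smooth very ample divisor $G$ to $E$ with small coefficient $1/m<1$, so that $L+\Delta+\tfrac1mG$ is still ample hence semi‑ample and $X\setminus G$ is affine, applies case \eqref{item:semi-ample-nakano} to the enlarged data, and then descends from $\Omega^q_X(\log(E+G))(L)$ to $\Omega^q_X(\log E)(L)$ via the residue exact sequence $0\to\Omega^q_X(\log E)(L)\to\Omega^q_X(\log(E+G))(L)\to\Omega^{q-1}_G(\log E|_G)(L|_G)\to0$ together with induction on $\dim X$.

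The hard part will be the construction in the second paragraph: turning the positivity of $L+\Delta$ into a genuine Hodge‑theoretic object carrying the twist $\cO_X(L)$. One must arrange the cover (or the auxiliary very ample divisor) so that the relevant graded de Rham summand is precisely $\Omega^q_X(\log E)(L)$ and not the log sheaf of a larger divisor or a further twist of it; checking the smoothness and simple‑normal‑crossing hypotheses for the cover, and verifying that the bounds $0<\alpha_i\le1$ are exactly what is needed both for this summand decomposition and for the nontrivial‑monodromy input to Artin--Grothendieck, is the delicate heart of the argument. By contrast, once the Hodge module is in place, the degeneration/strictness step and the Artin--Grothendieck vanishing are essentially formal, exactly as in the baby case and in Saito's proof of Kodaira--Saito vanishing.
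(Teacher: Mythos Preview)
Your high-level strategy coincides with the paper's: exhibit $H^p(X,\Omega^q(\log E)(L))$ as a Hodge-graded piece of the de Rham hypercohomology of a suitable mixed Hodge module, identify that hypercohomology with constructible-sheaf cohomology on an affine open, invoke Artin--Grothendieck, and finish with a residue exact sequence plus induction on $\dim X$. Your treatment of part \eqref{item:ample-nakano} --- adjoin a general very ample $G$, apply part \eqref{item:semi-ample-nakano} to $(E+G,\Delta+\tfrac1mG,L)$, then peel off $G$ by the residue sequence --- is essentially what the paper does (it adds a very ample $H$ and reruns the argument, using $X\setminus H$ affine).

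The genuine divergence is in how the twist $\cO_X(L)$ is absorbed in part \eqref{item:semi-ample-nakano}. You propose a Kawamata cyclic cover $\pi\colon Y\to X$ and extract $\Omega^q_X(\log E)(L)$ as an eigensheaf summand of $\pi_*\Omega^q_Y(\log\tilde E)$. The paper instead stays on $X$: it adds a general smooth $A\in|mS|$ (from the semi-ample system) to $E$, sets $D=E+A$ and $\cL=\cO_X(L-A)$, and observes that $\cL|_{X\setminus D}$ is \emph{torsion} (since $L-A\sim_\Q -\Delta-\tfrac{km-1}{km}A$ is supported on $D$). Then the twisted rank-one VHS $V_{\cL_U}$ and its intermediate extension ${j_{1_J}}_*{j_{2_J}}_!V_{\cL_U}$ are available directly on $X$, and Corollary~\ref{cor:logHDR} gives the $E_1$-degeneration with abutment $H^{p+q}(X\setminus D_J,\,{j_{2_J}}_!L_U)$; here $D_J$ collects the components with $\alpha_i\ne1$ together with $A$, and $X\setminus D_J\subset X\setminus E_{i_0}$ is affine. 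This yields $H^q(X,\Omega^p(\log D)\otimes\cL)=0$ for $p+q>n$, after which the residue sequence for $A$ and induction on $\dim X$ remove $A$ and recover $\Omega^p(\log E)(L)$.

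Two remarks on your route. First, the standard eigensheaf decomposition for cyclic covers produces summands of the form $\Omega^q_X(\log(E+H))\otimes\cL^{(i)}$ with the branch divisor $H$ in the log pole, not $\Omega^q_X(\log E)(L)$ on the nose; so you will still need a residue step to strip $H$, and the cover has not bought you a shortcut over the paper's direct approach. Second, you work exclusively with the $*$-extension $j_*V$, whereas the paper uses the mixed extension ${j_{1_J}}_*{j_{2_J}}_!$ (with $!$ along components of nontrivial monodromy, $*$ along those with $\alpha_i=1$); both give constructible cohomology on an affine and hence Artin--Grothendieck applies, but the choice of extension determines precisely which Deligne lattice appears in $\gr^F\DR$, so you should check that your choice actually produces the sheaf you claim. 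The covering approach is classical (cf.\ \cite{EV}, \cite{AMPW}) and can be made to work, but the paper's point is precisely to avoid covers by exploiting the $\R$-Hodge-module machinery of \S2 directly on $X$.
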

Similar Nakano-type vanishing results have been achieved by many authors from different points of views. For instance, the case in Item \eqref{item:semi-ample-nakano} when $\Delta=D$ has been studied in \cite{PM17} following elementary methods; the cases under more restrictive conditions in both Item \eqref{item:semi-ample-nakano} and Item \eqref{item:ample-nakano} can be found in \cite[\S 6]{EV} using positive characteristic methods of Deligne-Illusie; When $\Delta=D$, Item \eqref{item:ample-nakano} recovers Steenbrick vanishing \cite{St}; moreover, Item \eqref{item:ample-nakano} has been widely generalized to the compact K\"ahler case for $k$-positive bundles in \cite{HLWY} from a metric approach; see also \cite{AMPW} in the $\Q$-divisor case elementarily using Steenbrink vanishing and Kawamata coverings and \cite{Ara04} along the line of positive characteristic methods.   

\noindent
\subsection{Why real Hodge modules?}
Real Hodge modules are more natural to be considered on complex manifolds, for instance Hodge modules given by variations of Hodge structures parametrizing the cohomology of a compact K\"ahler family over a complex manifold. We work with $\R$-Hodge modules (rather than $\Q$-Hodge modules) and their injectivity and vanishing over projective spaces for the following reasons. First, the construction of Deligne lattices and Deligen meromorphic extensions of flat bundles is analytic over complex manifolds; see \S2.1. The second reason is about obtaining local freeness of the (filtered) Deligne lattice of a polarizable VHS, which is required for the construction of Hodge modules from a generically defined VHS (see Theorem \ref{thm:SaiStr}). To be more precise, the local freeness depends only on the existence of the polarization on the VHS, but not on the defining field. In fact, the Hodge metric given by the polarization makes the Deligne lattice of the VHS locally free; see Lemma \ref{lem:main}. 
%Notice that even in the rational case,  the nilpotent orbit theorem \cite{Sch73} is needed in \cite{Sai90b}, where the Schmid's result also used Hodge metrics. 
In the case that polarizations exist on the VHS (no matter which field it is defined over), the eigenvalues of the (local) monodromies of the underlying local system are forced to be  of absolute value 1, which makes the index of the Deligne lattices $\R$-divisors and hence the induced $\Dmod$-modules and Hodge modules (if exist) $\R$-specializable in the sense of Sabbah \cite{SabVan}; see \S 2.3 for details. Hence, it is natural to consider $\R$-Hodge modules induced from polarizable VHS instead of the rational ones even over algebraic manifolds.  Moreover, being in the real case for $\R$-Hodge modules makes the injectivity and vanishing more general.

\subsection{Structure}
We summarize the structure of this article. \S 2 is the main section of the article. We discuss Deligne lattices and intermediate mixed Hodge module extensions of  polarizable $\R$-VHS and their counterparts in the twisted case. In \S3, we give proofs of the main theorems and \S4 is about applications of our vanishing and injectivity.

\noindent 
{\bf Acknowledgement.} 
The author is deeply grateful to his former Ph.D advisor Mihnea Popa for his continuous encouragement and for many useful discussions. The author also thanks Christopher Hacon, Linquan Ma, Karl Schwede and Xiaokui Yang for discussions and for answering questions. 
\section{$\R$-Hodge modules in normal crossing case}

\subsection{Canonical construction of Deligne lattices and Deligne meromorphic extensions}
The construction of Hodge modules highly depends on the well-known theory of Deligne extensions of local systems and more generally of variations of Hodge structures. For completeness and late use, we give the construction of Deligne lattices and Deligne meromorphic extensions using $L^2$ technics inspired by ideas in \cite[Ch. VI]{Bj}. 

% Then we define,
%\begin{equation}\label{upperlocex}
%\cV^{\xi}:=\bigoplus_{j=1}^{m}\cO_X\cdot s_j^\xi,
%\end{equation}
%and
%\begin{equation}\label{lowerlocex}
%^\xi\cV:=\bigoplus_{j=1}^{m}\cO_X\cdot \text{ }^\xi s_j,
%\end{equation}
%which are naturally subsheaves of $j_*\cV$. By definition, all $s_j^\xi$ and $^\xi s_j$ are  singular along $z_i$ ($i=1,\dots,r$) as sections of $j_*\cV$. Moreover, the connection $\nabla$ induces a flat logarithmic connection $\nabla^\xi$ of $\cV^\xi$ (resp. $^\xi\nabla$ of $^\xi\cV$) along $D=(z_1\cdots z_r=0)$ by
%\begin{equation}\label{vtau}
%\nabla^\xi(s_j^\xi)=\sum_{i=1}^r\frac{dz_i}{z_i}\otimes \Gamma^{\xi_i}_i\cdot s_j^t,
%\end{equation} 
%and
%\begin{equation}\label{vtau}
%^\xi\nabla(\text{ }^\xi s_j)=\sum_{i=1}^r\frac{dz_i}{z_i}\otimes \text{ }^{\xi_i}\Gamma_i\cdot \text{ }^\xi s_j.
%\end{equation} 
%Moreover, the residue of $\nabla^\xi$ (resp. $^\xi\nabla$) along $z_i$ is exactly $\Gamma^{\xi_i}$ (resp. $\text{ }^{\xi_i}\Gamma_i$). 
 
Suppose that $X$ is a complex manifold of dimension $n$ with a normal crossing divisor $D=\sum D_i$ (that is locally $D=(z_1\cdots z_r=0)$ with coordinates $(z_1,\dots,z_n)$) throughout this section. Let $\cV$ be a holomorphic vector bundle with a flat connection $\nabla$ on $X\setminus D$. 

Since ultimately we are interested in $\R$-Hodge module, we are always assuming that the eigenvalues of the local monodromies of the flat lattice of $\cV$ (the underlying local system) are complex numbers of absolute value 1. However, most of the results about Deligne lattices in this article still hold in the general situation with very minor modification of the arguments.  

\begin{de}[Good Coverings]
An open covering $\{U_\alpha\}$ of $X\setminus D$ is said to be good if the following conditions are satisfied.
\begin{enumerate}
\item{Every $U_\alpha$ is simply connected}
\item{For every relatively compact subset $K$ of $X$, there are only finitely many $U_\alpha$ having non-empty intersections with $K$.}
\end{enumerate}
\end{de}

\begin{rmk} \label{goodcover}
Good coverings for $X\setminus D$ always exist using resolution of singularities, even when $D$ is not normal crossing. See \cite[Lemma 4.1.2]{Bj}.
\end{rmk}
Fix a good covering $\{U_\alpha\}$ of $X\setminus D$. Suppose  $\{c_j^\alpha\}_{j=1}^m$ is a set of horizontal sections trivializing $\cV|_{U_\alpha}$ for each $\alpha$ (since $U_\alpha$ is simply connected). For a section $s\in \Gamma(U, j_*\cV)$ over an open subset $U$ of $X$, we know
\begin{equation}\label{secmod1}
s|_{U\cap U_\alpha}=\sum\limits_j f_j^\alpha c_j^\alpha,
\end{equation}
where $j: X\setminus D\hookrightarrow X$ is the open embedding and $f_j^\alpha\in \Gamma(U\cap U_\alpha, \cO_X).$
\begin{de}\label{def:l2trivial}
For any $\R$-divisor $B$ supported on $D$, a section $s\in  \Gamma(U, j_*\cV)$ is said to be locally $L^2$ along $ D$ with weight $B$ if for a good covering $\{U_\alpha\}$ of $U$ and every $z_0\in U\cap D$, there exists a polydisk neighborhood $\Delta^n$ of $z_0$ with coordinates $(z_1,\dots,z_n)$ such that $B|_{\Delta^n}=\sum t_iD_i|_{\Delta^n}$ and $D_i|_{\Delta^n}=(z_i=0)$, and
\[\int_{\Delta^n\cap U_\alpha}\dfrac{\abs{f_j^\alpha}^2}{\Pi_i \abs{z_i}^{2t_i}}du< \infty\]
for every pair of $\alpha$ and $j$. Here $du$ is the volume form on $X$ and $f^\alpha_j$ are as in \eqref{secmod1}. 
\end{de}
One can easily check that the above $L^2$ condition is independent of choices of good covers and horizontal trivializaitions. Denote by $L^2(\cV,D;B)$ the subsheaf of $j_*\cV$ consisting of locally $L^2$ sections with the weight $B$. By Riemann integral formula, one can check the flat connection on $\cV$ extends to a flat logarithmic connection on $L^2(\cV,D;B)$ with poles along $D$. Here we conventionally set $B=B+0\cdot D$; that is, the coefficient of $D_i$ for every $i$ needs to be considered even if it is $0$.

For instance, when $\cV=\cO_{X\setminus D}$ and $B$ is effective, the $L^2$ construction yields multiplier ideals; that is, $$L^2(\cV,D;B)=\cI(\psi_B)=\cO_X(-\floor{B}),$$ where $\psi_B=\sum t_i\log\abs{g_i}$ is the plurisubharmonic function associated to $B$ locally, and $\cI(\psi_B)$ is the analytic multiplier ideal associated to $\psi_B$. 

When we want to birationally modify the base space, since $\cV$ is defined generically, we will still use $\cV$ to denote its pullback over the new base by abuse of notations. Since $L^2(L,D;B)$ is independent of choices of good covers, we immediately obtain the following lemma, which will be needed later.
\begin{lemma}\label{lem:birDLattice}
If $\mu: X'\rightarrow X$ is a log resolution of $(X, D)$ with $E=(\mu^*D)_{\textup{red}}$, then we have for every $\R$-divisor $B$ supported on $D$
\[\mu_*(L^2(\cV,E;\mu^*B)\otimes \omega_{X'})=L^2(\cV,D;B)\otimes \omega_X.\]
\end{lemma}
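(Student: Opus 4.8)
The plan is to reduce the asserted equality of sheaves to a pointwise comparison of weighted $L^2$-conditions on $\cV$-valued $n$-forms, and then to apply the change-of-variables formula for $\mu$; the key point is that the $\omega$-twists on both sides make the relevant $L^2$-densities transform functorially under $\mu$, with no discrepancy correction $K_{X'/X}$ intervening. Write $j\colon X\setminus D\hookrightarrow X$ and $j'\colon X'\setminus E\hookrightarrow X'$. Since $\mu$ is an isomorphism over $X\setminus D$, we have $\mu_*j'_*(\cV\otimes\omega_{X'\setminus E})=j_*(\cV\otimes\omega_{X\setminus D})$, and both $L^2(\cV,D;B)\otimes\omega_X$ and $\mu_*\bigl(L^2(\cV,E;\mu^*B)\otimes\omega_{X'}\bigr)$ embed in this sheaf (this uses that $j_*\cV$, hence $L^2(\cV,D;B)\subseteq j_*\cV$, is torsion-free, so tensoring by a line bundle and restricting to the complement of a divisor is injective; similarly on $X'$). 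It therefore suffices to show: for every open $W\subseteq X$ and every $\cV$-valued holomorphic $n$-form $\eta$ on $W\setminus D$, one has $\eta\in\bigl(L^2(\cV,D;B)\otimes\omega_X\bigr)(W)$ if and only if the same form, viewed over $\mu^{-1}(W)\setminus E\cong W\setminus D$, lies in $\bigl(L^2(\cV,E;\mu^*B)\otimes\omega_{X'}\bigr)\bigl(\mu^{-1}(W)\bigr)$.

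For this comparison I would use that $L^2(\cV,E;\mu^*B)$ is independent of the good covering (as noted after Definition~\ref{def:l2trivial}) and work with the good covering $\{\mu^{-1}(U_\alpha)\}$ of $X'\setminus E$, where $\{U_\alpha\}$ is a good covering of $X\setminus D$ with horizontal frames $\{c_j^\alpha\}$; the pullbacks $\{\mu^*c_j^\alpha\}$ are horizontal frames on $\mu^{-1}(U_\alpha)$ (which is biholomorphic to $U_\alpha$, hence simply connected), and local finiteness near compacts of $X'$ follows from properness of $\mu$. On a coordinate polydisk of $X$ around $z_0\in D$ with $D_i=(z_i=0)$ and $B=\sum_i t_iD_i$, write $\eta|_{U_\alpha}=\sum_j\bigl(f_j^\alpha\,dz_1\wedge\cdots\wedge dz_n\bigr)c_j^\alpha$; then membership of $\eta$ in $L^2(\cV,D;B)\otimes\omega_X$ near $z_0$ is finiteness of $\int_{\Delta^n\cap U_\alpha}\abs{f_j^\alpha}^2\prod_i\abs{z_i}^{-2t_i}\,du$ for all $\alpha,j$, as in Definition~\ref{def:l2trivial}. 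Since $\mu^*(dz_1\wedge\cdots\wedge dz_n)=J_\mu\,dw_1\wedge\cdots\wedge dw_n$ with $J_\mu$ the holomorphic Jacobian in local coordinates $(w_k)$ on $X'$, the corresponding $L^2(\cV,E;\mu^*B)$-integrand near a point of $E$ is $\abs{f_j^\alpha\circ\mu}^2\,\abs{J_\mu}^2\prod_k\abs{w_k}^{-2s_k}\,du'$, where locally $E_k=(w_k=0)$, $\mu^*D_i=\sum_k m_{ik}E_k$, and $\mu^*B=\sum_k s_kE_k$ with $s_k=\sum_i t_im_{ik}$. On the other hand, the pullback of the density $\abs{f_j^\alpha}^2\prod_i\abs{z_i}^{-2t_i}\,du$ equals $\abs{f_j^\alpha\circ\mu}^2\prod_i\abs{z_i\circ\mu}^{-2t_i}\,\mu^*du$, with $\mu^*du=\abs{J_\mu}^2\cdot(\text{positive smooth factor})\cdot du'$; and since $\divi(z_i\circ\mu)=\mu^*D_i$, locally $z_i\circ\mu=(\text{unit})\prod_k w_k^{m_{ik}}$, so $\prod_i\abs{z_i\circ\mu}^{-2t_i}$ agrees with $\prod_k\abs{w_k}^{-2s_k}$ up to a positive smooth factor bounded above and below on compact subsets. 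Hence the pulled-back density and the $L^2(\cV,E;\mu^*B)$-integrand differ only by such a harmless factor, the Jacobian contribution $\abs{J_\mu}^2$ being present on both sides.

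It then remains to invoke change of variables: $\mu$ is proper and an isomorphism outside a set of measure zero, so $\int_{\mu^{-1}(\Omega)}\mu^*\rho=\int_{\Omega}\rho$ for any nonnegative measurable density $\rho$ and open $\Omega\subseteq X$. Taking $\Omega=\Delta^n$ a small polydisk around $z_0\in D$, this shows that $L^2$-finiteness for $\eta$ near $z_0$ is equivalent to $L^2$-finiteness of the pulled-back form over $\mu^{-1}(\Delta^n)$; to turn the latter into the genuinely local condition near each point of $E$, one uses that the fibre $\mu^{-1}(z_0)\subseteq E$ is compact, covers it by finitely many coordinate polydisks of $X'$ adapted to $E$ on each of which the $L^2$-bound holds, and shrinks $\Delta^n$ so that $\mu^{-1}(\Delta^n)$ lies in their union; the reverse implication is even easier, by monotonicity of integrals of nonnegative functions. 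This gives the desired equivalence, hence the equality of the two subsheaves of $j_*(\cV\otimes\omega_{X\setminus D})$, which is the Lemma. For $\cV=\cO_{X\setminus D}$ it specialises, via $L^2(\cO_{X\setminus D},D;B)=\cO_X(-\floor{B})$, to the classical birational formula for adjoint/multiplier ideals, which is a reassuring consistency check.

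The step I expect to be most delicate is the middle computation: verifying that the $\omega$-twist exactly absorbs the Jacobian factor $\abs{J_\mu}^2$ — so that, unlike for the untwisted lattices $L^2(\cV,D;B)$ themselves, no discrepancy divisor appears — and carefully keeping track of the nowhere-vanishing holomorphic units (arising both from $z_i\circ\mu$ versus the coordinates $w_k$ and from the ratio of the chosen volume forms), which are irrelevant for integrability but must nonetheless be dealt with. The only genuinely non-formal ingredient is the invariance of the $L^2$-integral of a nonnegative top-degree form density under a proper birational morphism.
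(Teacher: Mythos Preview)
Your proof is correct and is precisely the computation the paper has in mind but does not spell out. The paper's own justification is a single sentence preceding the lemma: ``Since $L^2(\cV,D;B)$ is independent of choices of good covers, we immediately obtain the following lemma''; your argument is the natural unpacking of that sentence, and your identification of the key point---that the $\omega$-twist absorbs the Jacobian factor $\abs{J_\mu}^2$ so that no discrepancy term appears---is exactly right.
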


\begin{prop}
For every $\R$-divisor $B$ supported on $D$, $L^2(\cV,D;B)$ is a locally free $\cO_X$-module of finite type. 
\end{prop}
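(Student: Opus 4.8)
The plan is to show that, locally near any point of $D$, the sheaf $L^2(\cV,D;B)$ coincides with an explicitly framed holomorphic lattice; local freeness of finite type is then immediate. Since the statement is local, I would assume $X=\Delta^n$, $D=(z_1\cdots z_r=0)$ with $D_i=(z_i=0)$, and write $B=\sum_{i=1}^r t_iD_i$. First I would fix a flat multivalued frame $v=(v_1,\dots,v_m)$ of $\cV$ on $U:=X\setminus D$ and let $T_i\in\mathrm{GL}_m(\bbC)$ be the monodromy of $v$ along $D_i$; these pairwise commute, and by the standing hypothesis their eigenvalues have absolute value $1$. Hence there is a unique family of pairwise commuting matrices $\Gamma_1,\dots,\Gamma_r$, each a polynomial in the corresponding $T_i$, with $\exp(-2\pi\sqrt{-1}\,\Gamma_i)=T_i$ and all eigenvalues of $\Gamma_i$ in $(t_i-1,t_i]$: the semisimple part of $\Gamma_i$ is pinned down by the eigenvalue normalization, and the nilpotent part is determined by requiring $\exp(-2\pi\sqrt{-1}\,N_i)$ to be the unipotent part of $T_i$. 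I would then form $z^{\Gamma}:=\prod_{i=1}^r\exp((\log z_i)\Gamma_i)$ (a holomorphic invertible matrix on $U$, the factors commuting because the $\Gamma_i$ do) and set $\tilde e:=v\cdot z^{\Gamma}$; a short monodromy computation using $\exp(-2\pi\sqrt{-1}\,\Gamma_i)=T_i$ shows $\tilde e=(\tilde e_1,\dots,\tilde e_m)$ is single-valued on $U$, hence a holomorphic frame of $\cV$ there, so it spans a free rank-$m$ $\cO_X$-submodule $\tilde\cV^{\Gamma}:=\bigoplus_k\cO_X\tilde e_k$ of $j_*\cV$ (whose induced logarithmic connection has residue $\Gamma_i$ along $D_i$). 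The proposition then reduces to the identity $L^2(\cV,D;B)=\tilde\cV^{\Gamma}$, which incidentally also identifies $L^2(\cV,D;B)$ with the upper canonical Deligne lattice $\cV^{\lee B}$.

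To prove $\tilde\cV^{\Gamma}\subseteq L^2(\cV,D;B)$, I would write a local section of $\tilde\cV^{\Gamma}$ as $s=\sum_k g_k\tilde e_k$ with $g_k$ holomorphic, so that its coordinates in $v$ are $f=z^{\Gamma}g$, and test the $L^2$ condition of \definitionref{def:l2trivial} on $v$ (legitimate since that condition is independent of the horizontal trivialization). The matrix $z^{\Gamma}\prod_i z_i^{-t_i}=\prod_i z_i^{\Gamma_i-t_i}$ has eigenvalues $\gamma_i-t_i\in(-1,0]$ along $D_i$, so in a Jordan basis its entries are bounded near $D$ by $C\prod_i\abs{z_i}^{\gamma_i-t_i}\bigl(\log\tfrac1{\abs{z_i}}\bigr)^{k}$ for suitable $C>0$, $k\in\N$; since $\int_{\abs{z}<1}\abs{z}^{2c}\bigl(\log\tfrac1{\abs{z}}\bigr)^{2k}\,du<\infty$ whenever $c>-1$, each quotient $f_l/\prod_i z_i^{t_i}$ is square-integrable near $D$, i.e. $s$ is $L^2$ with weight $B$.

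For the reverse inclusion I would take $s\in\Gamma(U,j_*\cV)$ that is $L^2$ with weight $B$, write $s=\sum_k g_k\tilde e_k$ with $g_k$ holomorphic on $U\setminus D$ and coordinates $f$ in $v$, so $g=z^{-\Gamma}f$, and decompose the fibre $\bbC^m$ into joint generalized eigenspaces $W_\beta$ of the commuting $T_i$, on which $\Gamma_i=\beta_iI+N_i$ with the $N_i$ commuting and nilpotent and $\beta_i\in(t_i-1,t_i]$, so that $z^{-\Gamma}=\prod_i z_i^{-\beta_i}\exp(-\sum_i(\log z_i)N_i)$ there. Setting $h:=\prod_i z_i^{-\beta_i}f$, the hypothesis $\int_K\abs{f}^2\prod_i\abs{z_i}^{-2t_i}\,du<\infty$ on compact polydisks $K$ together with $\abs{z_i}^{-2\beta_i}\le\abs{z_i}^{-2t_i}$ near $D$ (as $\beta_i\le t_i$) gives $\int_K\abs{h}^2\,du<\infty$. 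Then I would run a descending induction on the nilpotent filtration of $W_\beta$: the coordinates of $g$ at the bottom of the filtration equal the corresponding coordinates of $h$, hence are holomorphic on $U\setminus D$ and $L^2$ near $D$, so they extend holomorphically across $D$; for a coordinate $g_\bullet$ at a higher level, $h_\bullet$ is $g_\bullet$ plus polynomials in the $\log z_i$ times coordinates of $g$ at strictly higher levels, which by the inductive hypothesis already extend (hence are bounded near $D$), so $g_\bullet$ is a difference of functions that are $L^2$ near $D$ and, being holomorphic on $U\setminus D$, extends across $D$ as well (equivalently, its Laurent expansion in $z_1,\dots,z_r$ has only nonnegative exponents). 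Thus every $g_k$ extends holomorphically and $s\in\tilde\cV^{\Gamma}$.

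I expect the main obstacle to be precisely this last point: when some eigenvalue of $\Gamma_i$ equals $t_i$ exactly, the crude estimate on $z^{-\Gamma}$ loses a factor $\log\tfrac1{\abs{z_i}}$, and one genuinely has to exploit the nilpotent filtration — and the boundedness near $D$ of the already-treated lower-level coordinates — rather than conclude from a single inequality. The remaining ingredients (the Jordan-form bounds on the entries of $z^{\pm\Gamma}$, the elementary one-variable integrals above, and the reduction to an explicit frame) are routine, and the whole argument is independent of the chosen good cover by the observation recorded after \definitionref{def:l2trivial}.
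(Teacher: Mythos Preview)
Your proposal is correct and follows essentially the same strategy as the paper: both construct the frame $\tilde e_j=z^{\Gamma}v_j$ (the paper writes $s_j^{t}=e^{\sum_i\Gamma_i^{t_i}\log z_i}c_j$) and prove that $L^2(\cV,D;B)$ coincides with the free $\cO_X$-module this frame spans, via the same two inclusions. Your treatment of the reverse inclusion spells out the induction along the nilpotent filtration that is genuinely needed when some eigenvalue $\beta_i$ equals $t_i$ with a nontrivial Jordan block, whereas the paper compresses this step into a one-line appeal to ``Cauchy--Schwarz and the eigenvalue assumption''; but this is a difference of exposition, not of method.
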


\begin{proof}
Assume $D=(z_1\cdots z_r=0)$ and $B=\sum_{i=1}^r t_iD_i$ with coordinates $(z_1,\dots,z_n)$ on a polydisk neighborhood $\Delta^n$. Choose a multivalued horizontal frame $\{c_j\}$ of $\cV$ on $\Delta^n\setminus D$ with monodromy $\gamma_i$ along each $z_i$ for $i=1,\dots, r$, which trivialize $\cV$ on each simply connected neighborhood of $\Delta^n\setminus D$. 
Then we set $$s^{t}_j=e^{\sum_{i=1}^r\Gamma^{t_i}_i\text{log}~z_i}\cdot c_j.$$
Here $\Gamma^{t_i}_i$ is the logarithm of $\gamma_i^{-1}$ (namely $\gamma_i=e^{-2\pi \sqrt{-1}\Gamma^{t_i}_i}$) with eigenvalues satisfying
\[t_i-1< \textup{eigenvalues of } \Gamma^{t_i}_i\le t_i.\]

By construction, one can easily see that all $s^{t}_j$ are univalued and $L^2$ with weight $B$, that is, 
\[s^{t}_j \in \Gamma(\Delta^n, L^2(\cV,D;B)).\]
On the other hand, assume $s\in \Gamma(X, L^2(\cV,D;B))$. Take a good covering $\{U_C\}_C$. On each $U_C$, $s$ can be written as
\[s|_{U_C}=\sum_jf^C_jc_j,\]
where $f^C_j$ are holomorphic functions on $U_C$. But 
\[s|_{U_C}=\sum_jf^C_jc_j=\sum_jf^C_je^{-\sum_{i=1}^r\Gamma^{t_i}_i\text{log}~z_i}\cdot s^t_j|_{U_C}=\sum_jg^C_js_j^t.\]
Since $s_j^t$ are globally defined on $\Delta^n\setminus D$ and linear independent, we see all $g^C_j$ glue together as a holomorphic function $g_j$ on $X\setminus D$.
Since
\[\int_{\Delta^n\cap U_C}\dfrac{\abs{f_j^C}^2}{\Pi \abs{z_i}^{2t_i}}du< \infty,\]
by Cauchy-Schwarz inequality and the eigenvalue assumption on $\Gamma^{t_i}_i$ we obtain
\[\int_{\Delta^n} \abs{g_j}^2du <\infty, \]
which means $g_j$ extends to a holomorphic function on $\Delta^n$. Therefore, the sections in $\{s^t_j\}$ trivialize $L^2(\cV,D;B)$.
\end{proof}
Since $L^2(\cV,D;B)$ is locally free, we define, for every $\R$-divisor $B=\sum t_i D_i$ supported on $D$  the (upper and lower) $Deligne$ $lattices$ by
\begin{equation}\label{eq:upperext}
\cV^{\lee B}=L^2(\cV,D;B)
\end{equation}
and
\begin{equation}\label{eq:lowerext}
\cV^{\gee B}=L^2(\cV,D;B+(1-\epsilon)D)
\end{equation}
for $0<\epsilon\ll1$. It is clear that $L^2(L,D;B+(1-\epsilon)D)$ is independent of $0<\epsilon\ll1$ from the above proof. Moreover, it is worth noticing that $\Gamma^{(1-\epsilon)t_i}_i=\Res_{D_i}\nabla$. Hence, the $\cV^{\lee B}$ (resp. $\cV^{\gee B}$) has the characterization of eigenvalues, 
\[t_i-1<(\textup{resp. } t_i\le)\textup{the eigenvalues of $\Res_{D_i}\nabla$}\le t_i(\textup{resp. }<t_i+1) .\]

Immediately from definition, we have the following corollary.
\bc\label{cor:semiconfil}
If two $\R$-divisors $B_1\leq B_2$ supported on $D$, then we have for every $i$
\be
\item{$\cV^{\lee B_1+D_i}=\cV^{\lee B_1}(-D_i)$ and $\cV^{\gee B_1+D_i}=\cV^{\gee B_1}(-D_i)$;}
\item{$\cV^{\lee B_1+\epsilon\cdot D_i}=\cV^{\lee B_1}$ and $\cV^{\gee B_1-\epsilon\cdot D_i}=\cV^{\gee B_1}$ for $0<\epsilon\ll1$;}
\item{$\cV^{\lee B_1+D}\subseteq$ $\cV^{\gee B_1}$;}
\item{$\cV^{\lee B_2}\subseteq\cV^{\lee B_1}$ and $\cV^{\gee B_2}\subseteq \cV^{\gee B_1}$;}
\item{$(\cV^{\lee B_1})^*\simeq$ ${(\cV^*)}^{\gee -B_1},$}
where $\bullet^*$ denotes the duality functor of $\cO$-modules.
\ee
\ec
Then the $Deligne$ $meromorphic$ $extension$ of $\cV$ is given by
\begin{equation}\label{eq:meroext}
\cV(*D)=\varinjlim_B \cV^{\lee B}=\varinjlim_B\cV^{\gee B}=\cV^{\lee B}\otimes_\cO\cO_X(*D)=\cV^{\gee B}\otimes \cO_X(*D)
\end{equation}
for arbitrary $\R$-divisor $B$ supported on $D$, where $\cO_X(*D)$ is the sheaf of meromorphic functions that are holomorphic away from $D$. The logarithmic connection induces a flat connection on the meromorphic extension which makes it a $\Dmod_X$-module. In fact, it is regular holonomic. 

In summary, the above corollary says that $\{\cV^{\lee B}\}_B$ (resp. $\{\cV^{\gee B}\}_B$) is an exhausted multi-indexed upper (resp. lower) semi-continuous and locally discrete discreasing filtration on $\cV(*D)$.

%The above proposition means that all $\cV^B$ form a parabolic structure of $\cV(*D)$ with respect to the (local) flat metric. 
\subsection{Deligne lattice and $\R$-indexed $V$-filtrations on meromorphic extensions}
In this paper, we work with real Hodge modules, whose underlying regular holonomic $\Dmod$-modules are $\R$-specializable along any hypersurfaces in the sense of Sabbah. $\R$-$specializablity$ means that the eigenvalues of the action of the Euler vector field along the hypersurface on the associated graded module of the Kashiwara-Malgrange filtration are all real numbers; see \cite{SabVan}; see also \cite{Wu17} for a detailed discussion. In particular, the Kashiwara-Malgrange filtration has an $\R$-indexed refinement, called the $\R$-indexed $V$-filtration. Let us first recall its definition.  

We consider a $\bbC$-subalgebra of $\Dmod_X$ for a normal crossing divisor $D$, given by 
\[V^0_D\Dmod_X=\{P\in \Dmod_X|P\cdot I^j_D\subseteq I^j_D\textup{ for every $j\in \Z$}\}\]
where $I_D$ is the ideal sheaf of $D$ with the convention that $I^j_D=\cO_X$ for $j\le 0$. The order filtration $F_\bullet$ of $\Dmod_X$ induces a filtration $F_\bullet$ on $V^0_D\Dmod_X$.

\begin{de}
The $\R$-indexed  $V$-filtration along a smooth hypersurface $H$ on a left $\Dmod_X$-module $M$ is a $\R$-indexed (locally discrete, that is the filtration jumps discretely locally on a relative compact neighborhood) decreasing filtration $\d{V}M$ such that
\be
\item $\bigcup V^\alpha M=M$ and $V^\alpha M$ is coherent over $V_H^0\Dmod_X$;
\item $t\cdot V^\alpha M\subset  V^{\alpha+1}M$, $\partial_t\cdot V^\alpha M\subset V^{\alpha-1}M$, for every $\alpha\in \R$;
\item $t\cdot V^\alpha M = V^{\alpha+1}M$ for $k\gg 0$; 
\item $t\partial _t$ (the Euler vector field along $H$) acts on $\gr_{V}^\alpha{M}=\frac{V^\alpha M}{V^{>\alpha }M}$ locally nilpotently (globally in the algebraic case), where $V^{>\alpha }M=\cup_{\beta>\alpha}V^{\beta}M$.
\ee 
\end{de}

When considering the meromorphic extention $\cV(*D)$, by the well-known Riemann-Hilbert correspondence of nearby and vanishing cycles (see for instance \cite[\S4.6]{Wu17}), our assumption on local monodromies implies that $\cV(*D)$ is $\R$-specilizable along every smooth hypersurface. Indeed, in this situation the Deligne lattices contain exactly the same information as the $\R$-indexed V-filtrations do. 
\begin{prop}\label{prop:vfDe}
Let $X$ be a complex manifold, and $D=\sum_i D_i$ a simple normal crossing divisor (i.e. $D$ is normal crossing and each irreducible component $D_i$ is smooth). Suppose that $\cV$ is a flat holomorphic vector bundle $X\setminus D$ so that eigenvalues of the local monodromies of the flat lattice are of absolute value 1. Then the Deligne meromorphic extension $\cV(*D)$ is $\R$-specializable along each $D_i$. Moreover, we have for every $\R$-divisor $B=\sum \alpha_iD_i$,
\[\cV^{\gee B}=\bigcap_{i=1}^rV^{\alpha_i}_{D_i}\cV(*D),\]
where $V^{\bullet}_{D_i}\cV(*D)$ is the V-filtration along $D_i$.
\end{prop}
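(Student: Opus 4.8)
\emph{Proof sketch (proposal).} The statement is local on $X$, and both $\R$-specializability and the $\R$-indexed $V$-filtration along a smooth hypersurface are local invariants; so I would work on a polydisk $\Delta^n$ with coordinates $(z_1,\dots,z_n)$ on which $D=(z_1\cdots z_r=0)$ and $D_i=(z_i=0)$, and I would use the explicit $L^2$-frames from the proof of local freeness above: after fixing a multivalued horizontal frame $\{c_j\}$ of $\cV$ with monodromy $\gamma_i$ around $z_i$, for any $\R$-divisor $C=\sum_i\beta_i D_i$ the lower Deligne lattice $\cV^{\gee C}$ is freely generated over $\cO_{\Delta^n}$ by the univalued sections $s^C_j=e^{\sum_i\Gamma_i\log z_i}c_j$, where $\gamma_i=e^{-2\pi\sqrt{-1}\,\Gamma_i}$ and the eigenvalues of $\Gamma_i=\Res_{D_i}\nabla$ lie in $[\beta_i,\beta_i+1)$. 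The plan is: (i) settle the one-divisor case $r=1$; (ii) identify the $V$-filtration of $\cV(*D)$ along a single $D_i$; (iii) intersect over $i$.

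For (i), write $t=z_1$ and propose the family of Deligne lattices itself, $V^\alpha\cV(*D_1):=\cV^{\gee \alpha D_1}$, as the $V$-filtration. It is $\R$-indexed, decreasing, exhaustive, locally discrete, and coherent over $V^0_{D_1}\Dmod_X$ (generated by $t$, $t\partial_t$ and the $\partial_{z_k}$ with $k\ge2$, all of which preserve the lattice). Corollary \ref{cor:semiconfil}(1) gives $t\cdot V^\alpha=V^{\alpha+1}$, hence the shift inclusion and the stabilization for $\alpha\gg0$; the relation $\nabla_{z_1\partial_{z_1}}s^C_j=\Gamma_1\cdot s^C_j$ yields $\partial_t\cdot V^\alpha\subseteq V^{\alpha-1}$ and shows that, modulo the $\cO_X$-linear part which is divisible by $z_1$, the operator $t\partial_t$ acts on $\gr_V^\alpha=\cV^{\gee \alpha D_1}/\bigcup_{\beta>\alpha}\cV^{\gee \beta D_1}$ as the residue $\Gamma_1$; since $\gr_V^\alpha\ne0$ forces $\alpha$ to be an eigenvalue of $\Res_{D_1}\nabla$ and these eigenvalues are normalized to $[\alpha,\alpha+1)$, it follows that $t\partial_t$ acts on $\gr_V^\alpha$ with the single generalized eigenvalue dictated by $\alpha$, which is precisely the last $V$-filtration axiom. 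Since the monodromy eigenvalues have absolute value $1$, the jumps are real, so all axioms hold; by the standard uniqueness of the $V$-filtration, $\cV(*D_1)$ is $\R$-specializable along $D_1$ with $V$-filtration $\cV^{\gee\bullet D_1}$.

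For (ii): running the argument of (i) near the generic point of $D_i$, where $D$ agrees with $D_i$, shows $\cV(*D)$ is $\R$-specializable along $D_i$; and since the $V$-filtration, once it exists generically, propagates uniquely and coherently over $D_i$, this holds along every $D_i$. The natural candidate for the full filtration is $V^\alpha_{D_i}\cV(*D)=\cV^{\gee \alpha D_i}\otimes_{\cO_X}\cO_X(*(D-D_i))$---order controlled along $D_i$, arbitrary poles along the other components---and its axioms follow from Corollary \ref{cor:semiconfil} just as in (i), the extra factor $\cO_X(*(D-D_i))$ being stable under $V^0_{D_i}\Dmod_X$ and transparent to $t\partial_t$ in the $z_i$-direction. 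Then for (iii) I would fix $B=\sum_i\alpha_i D_i$ with its adapted frame $\{s^B_j\}$, write an arbitrary section of $\cV(*D)$ as $\sum_j g_j s^B_j$ with each $g_j$ holomorphic off $D$, and show that it lies in $V^{\alpha_i}_{D_i}\cV(*D)$ if and only if each $g_j$ has no pole along $D_i$; granting this, membership in $\bigcap_i V^{\alpha_i}_{D_i}\cV(*D)$ is equivalent to all the $g_j$ being holomorphic, i.e.\ to membership in $\cV^{\gee B}$, which is the assertion. The last equivalence is once more the $L^2$ estimate from the proof above: the $B$-adapted and the $\alpha_i D_i$-adapted frames differ, in the $z_i$-variable, by multiplication by $z_i$ raised to a matrix whose eigenvalues lie in $[0,1)$, so the Cauchy--Schwarz argument of that proof shows the $z_i$-pole order of $g_j$ is exactly what membership in $\cV^{\gee \alpha_i D_i}\otimes_{\cO_X}\cO_X(*(D-D_i))$ measures, via Corollary \ref{cor:semiconfil}(1)--(2).

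The hard part will be the conventions in step (i): matching the eigenvalue interval $[\alpha,\alpha+1)$ of $\Res_{D_1}\nabla$ on the lower lattice with the precise sign and shift in the last $V$-filtration axiom, while keeping track of the $(1-\epsilon)D$-shift built into the definition \eqref{eq:lowerext} of $\cV^{\gee\bullet}$; and, relatedly, proving in step (iii) that membership in $V^{\alpha_i}_{D_i}$ is genuinely detected by the $z_i$-pole order alone, even when the section is written in a frame that is not adapted to $\alpha_i D_i$. The remaining verifications are the $L^2$/Cauchy--Schwarz bookkeeping already carried out for the preceding proposition, combined with the formal properties collected in Corollary \ref{cor:semiconfil}.
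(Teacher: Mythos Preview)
Your approach is exactly the paper's: set $V^{\alpha}_{D_i}\cV(*D)=\cV^{\gee\alpha D_i}\otimes_{\cO_X}\cO_X(*(D-D_i))$, verify the $V$-filtration axioms from the residue computation on the explicit frame, and read off the intersection formula; the paper does this in two lines while you (reasonably) spell out the verifications.

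One small correction in step~(iii): the $B$-adapted and $\alpha_iD_i$-adapted frames \emph{agree} in the $z_i$-variable, since both use a branch of $\log\gamma_i^{-1}$ with eigenvalues in $[\alpha_i,\alpha_i+1)$; they differ only by the factor $\prod_{k\ne i}z_k^{\Gamma_k^B-\Gamma_k'}$, where each $\Gamma_k^B-\Gamma_k'$ is semisimple with integer eigenvalues and hence this matrix is a unit over $\cO_X(*(D-D_i))$. That is the correct reason the $z_i$-pole order of the $g_j$ detects membership in $V^{\alpha_i}_{D_i}$, and it makes the Cauchy--Schwarz detour unnecessary. Likewise, the generic-point argument in~(ii) is superfluous: once you write down the candidate filtration globally and check the axioms, $\R$-specializability along all of $D_i$ follows directly.
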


\bp
For every $\alpha\in \R$, we set
$$V^{\alpha}_{D_i}\cV(*D)=\cV^{\gee\alpha D_i}\otimes_\cO \cO_X(*(D-D_i)).$$ 
One verifies easily, $\{V^{\alpha}_{D_i}\cV(*D)\}_\alpha$ defines the $V$-filtration on $\cV(*D)$ along $D_i$. The second statement is then obvious. 
\ep

\subsection{$\R$-Hodge modules in the normal crossing case}
The theory of Hodge modules was originally developed with $\Q$-coefficients in \cite{Sai88}. To study K\"ahler family, Saito \cite{Sai90a} also generalized the theory to the case of $\R$-coefficients. The underlying regular holonomic $\Dmod$-modules are $\R$-specializable in this situation. See also \cite[Ch. B]{PPS}.

We first recall some terminology. Assume as in previous section that $X$ is a complex manifold of dimension $n$ with a normal crossing divisor $D$. 

The basic input of a real Hodge module $M$ is a triple $(\cM, F_\bullet, K_\R)$ satisfying 
\begin{enumerate}
\item $\cM$ is a regular holonomic left $\Dmod_X$-module 
\item $F_\bullet$ is a coherent filtration on $\cM$
\item $K_\R$ is and $\R$-perverse sheaf satisfying $$\DR(\cM)\simeq K_\R\otimes \bbC$$
in the sense of Riemann-Hilbert correspondence.
\end{enumerate} 
The triple is an $\R$-$Hodge$ $module$ if moreover an inductive requirement for nearby cylces along any holomorphic functions is satisfied; see \cite{Sai88} and \cite{Sai90a}. A $polarization$ on a weight $w$ $\R$-Hodge module $M$ is an isomorphism between its dual and  its $w$-th Tate twist subject to a similar inductive requirement; one says that $M$ is polarizable if it admits at least one polarization.  For instance, $\R^H=(\cO_X, F_\bullet, \R[n])$ is the trivial Hodge module, with the trivial filtration $F_p\cO_X=\cO_X$ for $p\ge 0$ and $0$ otherwise.

An $\R$-Hodge module is $strictly$ supported on $X$ if it has no nontrivial subobjects or quotient objects that are supported on a proper subset. One of the most important result of Hodge modules is the structure theorem relating polarizable Hodge modules and polarizable variations of Hodge structure (VHS). We recall it for the latter use.
\begin{thm}[Saito]\label{thm:SaiStr}
The category of polarizable real Hodge modules of weight $w$ strictly supported on a singular variety $X$ is equivalent to the category of generically defined polarizable real variations of Hodge structure of weight $w-\dim X$ on $X$.
\end{thm}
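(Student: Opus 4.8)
The plan is to construct mutually quasi-inverse functors between the two categories. In one direction, starting from a polarizable real Hodge module $M=(\cM,F_\bullet,K_\R)$ of weight $w$ strictly supported on $X$, I would restrict to a Zariski-dense open $U\subseteq X_{\textup{sm}}$ over which $\cM$ is $\cO$-coherent. There $\cM|_U$ is a flat bundle $(\cV,\nabla)$, the coherence axiom $F_p\Dmod_X\cdot F_q\cM\subseteq F_{p+q}\cM$ forces $F_\bullet$ to restrict to a subbundle filtration satisfying Griffiths transversality, $K_\R|_U[-\dim X]$ is a local system with $\DR(\cV)\simeq K_\R|_U\otimes\bbC$, and a polarization of $M$ restricts to a flat polarizing form; thus $(\cV,F_\bullet,K_\R|_U)$ is a polarizable real VHS of weight $w-\dim X$, functorially in $M$. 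This functor is faithful since $M$ has no nonzero sub- or quotient object supported on $X\setminus U$, so $\Hom(M,N)\hookrightarrow\Hom(M|_U,N|_U)$, and full because the graph of a morphism of VHS on $U$ is a polarizable sub-VHS whose minimal extension (constructed next) is a sub-Hodge-module of $M\oplus N$ strictly supported on $X$, hence the graph of a morphism $M\to N$.

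The substantive direction is the extension functor. Given a generically defined polarizable real VHS $V=(\cV,F_\bullet,\VV_\R)$ of weight $w-\dim X$ on a dense open $U\subseteq X$, shrinking $U$ we may assume $U\subseteq X_{\textup{sm}}$, and then by resolution of singularities choose a projective morphism $\mu\colon X'\to X$ that is an isomorphism over $U$, with $X'$ smooth and the complement $D'$ of $\mu^{-1}(U)$ a simple normal crossing divisor; granting Saito's stability of polarizable Hodge modules under projective direct images and the decomposition theorem, it suffices to build the extension over $X'$ and then take the direct-image summand strictly supported on $X$. So assume $X$ smooth and $D=X\setminus U$ simple normal crossing. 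Then take $\cM$ to be the minimal $\Dmod_X$-submodule of the Deligne meromorphic extension $\cV(*D)$ restricting to $\cV$ over $U$, equipped with the Hodge filtration induced from $\bigcup_B\big(\cV^{\lee B}\cap j_*F_\bullet\big)$ over all $\R$-divisors $B$ supported on $D$; by Lemma \ref{lem:main} this is a subbundle filtration on every Deligne lattice (the Hodge metric of the polarization is exactly what makes the lattices locally free), and set $K_\R=j_{!*}(\VV_\R[\dim X])$. It then remains to verify, by induction on $\dim X$, that $(\cM,F_\bullet,K_\R)$ satisfies the Hodge-module axioms: $\DR(\cM)\simeq K_\R\otimes\bbC$ because both sides are intermediate extensions of the data on $U$; $\R$-specializability along each $D_i$ and the identification of $\gr_V^\alpha$ with a polarizable $\R$-VHS carrying the limiting Hodge filtration follow from Proposition \ref{prop:vfDe} together with Schmid's nilpotent orbit theorem in the normal crossing setting; and the polarization of $V$ propagates to the nearby and vanishing cycles via the $SL_2$-orbit theorem. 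Independence of the chosen normal crossing model is Lemma \ref{lem:birDLattice}.

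For the quasi-inverse property, restricting the extension of $V$ back to $U$ returns $V$ by construction, and conversely, if $M$ is polarizable and strictly supported on $X$ with $M|_U\cong V$, then $\cM$ has no $\Dmod_X$-sub- or quotient-module supported on $X\setminus U$; since any submodule of $\cV(*D)$ with this property must be the minimal extension, $M$ is isomorphic to the Hodge module built from $V$, so the two functors are mutually quasi-inverse.

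\textbf{The main obstacle} is entirely in the extension functor: proving that the minimal extension with the filtration $\bigcup_B(\cV^{\lee B}\cap j_*F_\bullet)$ satisfies the inductive nearby-/vanishing-cycle axioms, i.e. that the $\R$-indexed $V$-filtration is strictly compatible with $F_\bullet$ and that the graded pieces carry polarizable $\R$-mixed Hodge structures. Over $\Q$ this is Saito's theorem, proved through the mixed Hodge theory of the limit; in the real case one genuinely needs the analytic input — Schmid's $SL_2$-orbit theorem together with the Simpson--Mochizuki theory of tame harmonic bundles — which is precisely why the Deligne lattices in \S2.1 were constructed analytically via $L^2$-estimates rather than by the algebraic $V$-filtration alone.
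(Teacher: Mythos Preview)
The paper does not give its own proof of this theorem: it attributes the result to Saito, citing \cite{Sai90b} for the rational case and \cite{Sai90a} for the real case, and merely observes that the proof carries over because the key analytic inputs---Schmid's nilpotent orbit and $SL_2$-orbit theorems \cite{Sch73} and the multivariable version \cite{CKS}---extend to real coefficients (as noted in \cite{SV11}, \cite{PPS}). Your outline is a reasonable sketch of what Saito's argument actually contains, and your identification of the ``main obstacle'' matches exactly what the paper says the substance of the proof is.

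That said, there is one genuine error in your construction of the extension functor. You write that the Hodge filtration on the minimal extension $\cM=\Dmod_X\cdot\cV^{\lee 0}$ is ``induced from $\bigcup_B\big(\cV^{\lee B}\cap j_*F_\bullet\big)$ over all $\R$-divisors $B$ supported on $D$.'' This union is $\cV(*D)\cap j_*F_\bullet$, which is not $\cO_X$-coherent (already for the trivial VHS one gets $\cO_X(*D)$), so it cannot be the Hodge filtration of a Hodge module. The correct filtration, as the paper records in \S2.3, is the \emph{convolution} filtration
\[
F_p(\Dmod_X\cdot\cV^{\lee 0})=\sum_i F_{p-i}\Dmod_X\cdot F_i^{\lee 0},
\]
built only from the single lattice $\cV^{\lee 0}$ with its filtration $F_\bullet^{\lee 0}=\cV^{\lee 0}\cap j_*F_\bullet$. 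The role of the other lattices $\cV^{\lee B}$ (or equivalently the $V$-filtration, via Proposition~\ref{prop:vfDe}) is not to define $F_\bullet\cM$ directly but to verify the strict compatibility of $F_\bullet$ with the $V$-filtration along each $D_i$---precisely the inductive axiom you flag as the obstacle. With this correction your sketch is faithful to Saito's strategy; you should also note that the reduction to the normal crossing case via resolution and projective pushforward is itself a substantial step, since it presupposes the decomposition theorem for polarizable Hodge modules, which in Saito's logical order is proved simultaneously with the structure theorem rather than before it.
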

It is originally proved in \cite{Sai90b} in rational case. Since the proof in $loc.$ $cit.$ rests on the nilpotent orbit theorem and $SL_2$-orbit theorem of \cite{Sch73} and the multivariable generalization \cite{CKS}, which can be generalized to real-coefficient case as noticed in \cite[App.]{SV11}(see also \cite{PPS}), the proof carries over to the real-coefficient case; see  \cite{Sai90a} for details.
 
For the application of this article, we discuss the construction of polarizable Hodge modules from polarizable variations of Hodge structure in the normal crossing case. 

Let $V=(\cV, F_\bullet, \VV_\R)$ be a polarizable real VHS on $X\setminus D$. After fixing a polarization, the eigenvalues of the local monodromies along components of $D$ are forced to be of absolute value 1 (see for instance \cite[Proof of Lem. 4.5]{Sch73}; it holds even for polarizable complex VHS). Namely, the assumption on eigenvalues is automatic if the flat bundle underlies a polarizable real VHS. 

Naively, for an $\R$-divisor $B$ supported on $D$, we define 
$$F^{\lee B}_\bullet=\cV^{\lee B}\cap j_*F_\bullet,$$
and 
$$F^{\gee B}_\bullet=\cV^{\gee B}\cap j_*F_\bullet,$$
where $j\colon X\setminus D\to X$ the open embedding, and set $V^{\lee B}=(\cV^{\lee B},F_\bullet^{\lee B}, V_\R)$ and $V^{\gee B}=(\cV^{\gee B},F_\bullet^{\gee B}, V_\R)$, the filtered Deligne lattices of the VHS. 

By construction, Lemma \ref{lem:birDLattice} implies immediately the following birational invariant property of $F^{\lee B}_\bullet$. 
\begin{lemma}\label{lem:birSV}
If $\mu: X'\rightarrow X$ is a log resolution of $(X, D)$ with $E=(\mu^*D)_{\textup{red}}$, then we have for every $\R$-divisor $B$ supported on $D$
\[\mu_*(F^{\lee \mu^*B}_\bullet\otimes \omega_{X'})=F^{\lee B}_\bullet\otimes \omega_X.\]
In particular, we have 
\[\mu_*(S^{\lee \mu^*B}(V)\otimes \omega_{X'})=S^{\lee B}(V)\otimes \omega_X.\]
\end{lemma}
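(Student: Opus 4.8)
The plan is to reduce the statement to \lemmaref{lem:birDLattice} together with the elementary observations that the pushforward $\mu_*$ and tensoring by a line bundle both commute with forming intersections of subsheaves. First I would fix an integer $p$ and prove the corresponding equality for the single filtered piece $F^{\lee B}_p=\cV^{\lee B}\cap j_*F_p$, regarded as a subsheaf of $j_*\cV$; the asserted equality for the whole filtration $F^{\lee B}_\bullet$ then holds degree by degree, and the equality for $S^{\lee B}(V)$ follows by taking $p$ to be the smallest index with $F_p\cV\neq0$ on $X\setminus D$, since $S^{\lee B}$ is by definition this particular filtered piece and, $\nu$ being dominant, the relevant index is the same on $X'$.

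Set $j'\colon X'\setminus E\hookrightarrow X'$ for the open embedding and $\nu=\mu|_{X'\setminus E}\colon X'\setminus E\to X\setminus D$, a proper birational morphism of complex manifolds, along which $\cV$ and $F_\bullet$ on $X'$ are by convention the pullbacks of the generically defined data. I would then record the following identities on $X'$. By the projection formula (using local freeness of $\cV$ and of $F_p$ on $X\setminus D$), the relation $\mu\circ j'=j\circ\nu$, and the standard fact $\nu_*\omega_{X'\setminus E}=\omega_{X\setminus D}$ for the proper birational morphism $\nu$ of smooth manifolds (also a special case of \lemmaref{lem:birDLattice}), one gets
\[\mu_*\bigl(j'_*F_p\otimes\omega_{X'}\bigr)=j_*F_p\otimes\omega_X\quad\text{and}\quad\mu_*\bigl(j'_*\cV\otimes\omega_{X'}\bigr)=j_*\cV\otimes\omega_X;\]
and from $\cV^{\lee C}=L^2(\cV,\cdot\,;C)$, \lemmaref{lem:birDLattice} gives $\mu_*\bigl(\cV^{\lee\mu^*B}\otimes\omega_{X'}\bigr)=\cV^{\lee B}\otimes\omega_X$.

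Finally I would assemble these. Tensoring by the line bundle $\omega_{X'}$ commutes with intersection, so $F^{\lee\mu^*B}_p\otimes\omega_{X'}=\bigl(\cV^{\lee\mu^*B}\otimes\omega_{X'}\bigr)\cap\bigl(j'_*F_p\otimes\omega_{X'}\bigr)$ inside $j'_*\cV\otimes\omega_{X'}$. Since $\mu_*$ is left exact it preserves intersections of subsheaves (write $A\cap B=\Ker(A\oplus B\to C)$), so applying $\mu_*$ and inserting the three identities above turns this into $\bigl(\cV^{\lee B}\otimes\omega_X\bigr)\cap\bigl(j_*F_p\otimes\omega_X\bigr)$ inside $j_*\cV\otimes\omega_X$; pulling $\omega_X$ back out of the intersection gives $(\cV^{\lee B}\cap j_*F_p)\otimes\omega_X=F^{\lee B}_p\otimes\omega_X$, as wanted. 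I do not expect a genuine obstacle here — this is consistent with the paper's remark that the statement is immediate — the only points deserving a line of justification being that $\mu_*$ commutes with intersections of subsheaves and the auxiliary identity $\nu_*\omega_{X'\setminus E}=\omega_{X\setminus D}$.
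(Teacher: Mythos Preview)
Your proposal is correct and is precisely the intended argument: the paper states that the lemma follows ``by construction'' from \lemmaref{lem:birDLattice}, and what you have written is the natural unpacking of that sentence---intersect the definition $F^{\lee B}_p=\cV^{\lee B}\cap j_*F_p$ with the identity of \lemmaref{lem:birDLattice}, using that $\mu_*$ and tensoring by $\omega$ both preserve intersections of subsheaves. There is nothing more to it; your two flagged points (left-exactness of $\mu_*$ and $\nu_*\omega_{X'\setminus E}=\omega_{X\setminus D}$) are exactly the routine verifications one would expect.
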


It is not clear a priori that $F^{\lee B}_\bullet$ or $F^{\gee B}_\bullet$ are coherent (indeed, their coherence implies local freeness; see \cite[\S 3.b]{Sai90b}). We justify their local freeness because this point seems to be missing in literature. 
\begin{lemma} \label{lem:main}
With notations as above, for every $\R$-divisor $B$ supported on $D$, $\{F^{\lee B}_\bullet\}$ (resp. $\{F^{\gee B}_\bullet\}$) is a subbundle filtration of $\cV^{\lee B}$ (resp. $\cV^{\gee B}$).
\end{lemma}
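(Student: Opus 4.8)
The plan is to reduce the statement to a purely local question about tame harmonic bundles and then invoke the norm estimates of Simpson–Mochizuki. Since $\cV^{\lee B}$ and $\cV^{\gee B}$ are already known to be locally free (the Proposition proved above), what remains is to show that each graded piece $F^{\lee B}_p/F^{\lee B}_{p-1}$ (resp. for $\gee B$) is locally free, equivalently that $F^{\lee B}_p$ is a subbundle, i.e. locally a direct summand with locally free quotient. Working on a polydisk $\Delta^n$ with $D = (z_1\cdots z_r = 0)$ and $B = \sum t_i D_i$, I would fix the polarization on $V$, which by the remark recalled before the lemma forces the local monodromies to have eigenvalues of absolute value $1$, so that all the Deligne-lattice machinery applies. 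The key point is that a polarizable VHS on $\Delta^n \setminus D$ gives rise to a tame harmonic bundle (Schmid in the one-variable case, Cattani–Kaplan–Schmid and Mochizuki in several variables), and the Hodge metric $h$ coming from the polarization has precisely controlled growth near $D$.

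The main step is the following. Choose a local holomorphic frame of $\cV^{\lee B}$ adapted to the weight filtration / parabolic structure — concretely, a frame $\{e_\lambda\}$ in which the multivalued flat frame $\{c_j\}$ is expressed via $e^{\sum \Gamma_i^{t_i}\log z_i}$ as in the proof of the Proposition, and which is simultaneously compatible with the Hodge filtration $F_\bullet$ of the VHS on a generic fiber. The norm estimates for tame harmonic bundles (Simpson, Mochizuki \cite{Sim90, Moc06, Moc09}) say that in such an adapted frame the Hodge metric is mutually bounded, up to powers of $\log|z_i|$, with the "model" metric $\prod_i |z_i|^{-2\,\mathrm{(parabolic\ weight)}} (\text{polynomial in } \log|z_i|)$; in particular the metric is "good" in Cornalba–Griffiths's sense and the Hodge filtration extends to a filtration by subbundles of the Deligne lattice. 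Since $F^{\lee B}_\bullet = \cV^{\lee B}\cap j_*F_\bullet$ is by definition the extension of the Hodge subbundle $F_\bullet \subset \cV$ across $D$ inside the lattice $\cV^{\lee B}$, and the norm estimate shows this extension is by saturated (hence, over the regular local ring $\cO_{X,x}$, locally free direct-summand) subsheaves, I conclude that $F^{\lee B}_p$ is a subbundle. The argument for $\cV^{\gee B} = L^2(\cV, D; B + (1-\epsilon)D)$ is identical, only the weight shifts by $(1-\epsilon)D$, which just translates the parabolic weights and does not affect the norm-estimate input.

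Concretely, the order of steps would be: (i) reduce to the local polydisk situation and recall, via Lemma \ref{lem:birDLattice}, that the construction is compatible with log resolutions, so one may assume $D$ simple normal crossing with coordinate hyperplanes; (ii) put the polarization on $V$ and invoke the tame harmonic bundle structure, recording the Simpson–Mochizuki norm estimates in an adapted frame; (iii) show that in such a frame the Hodge filtration $F_\bullet \cV$ has a $\log$-controlled growth so that $j_* F_p \cap \cV^{\lee B}$ is generated by the "obvious" sections $z^{\lfloor\cdot\rfloor}\cdot(\text{adapted frame vectors in } F_p)$, exactly as in the proof that $L^2(\cV, D; B)$ is locally free; (iv) deduce from this explicit generating set that the quotient $\cV^{\lee B}/F^{\lee B}_p$ is torsion-free, hence (being coherent over a regular local ring and of generically constant rank) locally free, and therefore $F^{\lee B}_p$ is a subbundle; (v) repeat verbatim for $\cV^{\gee B}$.

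The hard part is step (iii): making precise that the $L^2$/norm estimate for the \emph{full} lattice restricts correctly to the Hodge \emph{sub}bundle, i.e. that a section of $\cV$ lying in $F_p$ generically and $L^2$ with weight $B$ must, after clearing the parabolic exponents, have holomorphic coordinates lying in $F_p$ of the adapted frame — this uses that the Hodge metric is compatible with the filtration (Schmid's nilpotent orbit theorem bounds the Hodge norm block-wise with respect to $F_\bullet$) together with the several-variable generalization. One must be a little careful that the $\log|z_i|$ factors in the norm estimate are harmless: they only contribute a bounded correction to the $L^2$ integrability, exactly as the Cauchy–Schwarz step in the earlier proof absorbs them, so no jumping of the filtration is introduced. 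Everything else is bookkeeping with parabolic weights and the elementary observation (used already for $\cO_X(-\lfloor B\rfloor)$) that $\lfloor B + (1-\epsilon)D\rfloor$ and the eigenvalue conventions match up.
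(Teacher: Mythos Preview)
Your proposal is correct and uses the same essential input as the paper—the tame harmonic bundle structure coming from the polarization and the Simpson--Mochizuki norm estimates—but the execution differs in a way worth noting. You work directly on the flat side: choose an adapted frame of $\cV^{\lee B}$, invoke block-wise norm estimates compatible with $F_\bullet$, and then argue that $j_*F_p\cap\cV^{\lee B}$ is generated by the obvious sections, so the quotient is torsion-free. The paper instead passes to the Higgs side: the associated graded $\cE=\gr^F\cV$ with the Higgs field $\theta$ and Hodge metric $h$ is a tame harmonic bundle (tameness is automatic since $\theta$ is nilpotent), and Mochizuki's result gives that the metric-growth filtration $\{\cE^B\}$ makes $\cE(*D)$ a parabolic bundle, in particular each $\cE^B$ is locally free. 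Because $\theta$ is nilpotent, the metric-growth filtration $\{\cV^B_h\}$ on the flat side coincides with the Deligne lattices $\{\cV^{\gee B}\}$, and the local (non-canonical) isomorphism $\cE^B\simeq\cV^B_h$ transports the \emph{grading} on $\cE^B$ to a subbundle filtration on $\cV^{\gee B}$, which is then identified with $F_\bullet^{\gee B}$. The upshot: the paper sidesteps precisely your ``hard step (iii)''—the delicate block-wise control of the Hodge norm in an adapted frame—by exploiting that on the Higgs side the Hodge filtration is already a direct-sum decomposition, so local freeness of the graded pieces is immediate once $\cE^B$ is known to be locally free. Your route is more explicit and closer in spirit to the proof of local freeness of $L^2(\cV,D;B)$ earlier in the paper; the paper's route is shorter but requires citing the parabolic-bundle machinery and the identification of the two parabolic structures (Simpson's table, or Brunebarbe's Lemma 2.12).
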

\begin{proof}
When the monodromies are quasi-unipotent, Saito's idea is to reduce it to the unipotent case, using covering tricks and compatibility of the $F$-filtration and $V$-filtrations along components of $D$. In the unipotent case, the above lemma follows from the nilpotent orbit theorem of Schmid. In our case, covering tricks do not apply anymore; instead, we can appeal to the theory of tame harmonic bundles in the sense of Simpson and Mochizuki. 

From the classical theory of variations of Hodge structure, we know that $V$ gives rise to a harmonic bundle $(\cE, \theta, h)$ in the sense of Simpson \cite{Sim90}, where $\cE$ is the associated graded module, $\theta$ the Higgs field induced by the flat connection, and $h$ the Hodge metric induced by the polarization. 

Let us recall some terminology. Similar to the definition of locally $L^2$ sections with respect to the flat metric (cf. Definition \ref{def:l2trivial}), for an $\R$-divisor $B=\sum_i \alpha_iD_i$ supported on $D$, we say a section $s$ of $j_*\cE$ (resp. $j_*\cV$) has the order of growth bigger than $B$ if there exists a positive constant $C$ so that
\[\norm s_h\le C\cdot\prod_i \abs z_i^{\alpha_i-\epsilon}\] 
locally when $D_i$ is defined by a coordinate $z_i$, for $0<\epsilon\ll 1$.  Denote by $\cE^B$ (resp. $\cV^B_h$) the subsheaf of $j_*\cE$ (resp. $j_*\cV$) consisting of sections with order of growth bigger than $B$. Clearly, the $\{\cE^B\}_B$ (resp. $\{\cV^B_h\}_B$) form a decreasing filtration on 
\[\cE(*D)=\bigcup_B \cE^B \textup{ (resp. }\cV(*D)_h= \bigcup_B \cV^B_h).\] 

Since $\theta=\theta_\bullet$ is a graded morphism, it is nilpotent. In particular, the harmonic bundle $(\cE, \theta, h)$ is tame. We refer to \cite{Sim90} for the definition of tameness for curves and \cite{Moc06} in general. By \cite[Prop. 2.53]{Moc09}, we have that $\cE(*D)$ together with $\{\cE^B\}_B$ is a parabolic bundle and hence so is $\cV(*D)_h$ with $\{\cV_h^B\}_B$; see \cite{Moc06} for the definition of parabolic bundles. Thanks to the nilpotency of $\theta$ again, the meromorphic sheaves $\cV(*D)_h$ and $\cV(*D)$ as well as the filtrations $\{\cV^B_h\}_B$ and $\{\cV^{\gee B}\}_B$ are canonical isomrophic; see the table in \cite[Page 720]{Sim90} or more precisely \cite[Lem. 2.12]{Bru17}. In particular, since $\cE^B$ and $\cV^B_h$ are (non-canonically) isomorphic locally, the grading structure on $\cE^B$ induces a subbundle filtration $\{F_\bullet^{\lee B}\}$ on $\cV^{\gee B}$. Necessarily we have $F_\bullet^{\gee B}=\cV^{\gee B}\cap j_*F_\bullet$.  Since locally
$$F_\bullet^{\lee B}=F_\bullet^{\gee B-(1-\epsilon)D}$$
for $0<\epsilon \ll 1$, we conclude that $F_\bullet^{\lee B}$ is also a subbundle filtration. 
   
\end{proof}

As a byproduct, the proof above implies that we have a canonical isomorphism
\[\cE^B\simeq \gr^F_\bullet \cV^{\gee B}.\]

Meanwhile, we have a $\Dmod_X$-module defined by $\Dmod_X\cdot \cV^{\lee 0}$
with the filtration by convolution, that is,
\[F_p (\Dmod_X\cdot \cV^{\lee 0})=\sum_i F_{p-i}\Dmod_X\cdot F_i^{\lee 0}.\]
Using Riemann-Hilbert correspondence, one checks that the underlying perverse sheaf is the minimal perverse extension $j_{!*}\VV_\R[n]$, where $n$ is the dimension of $X$. Now, the proof of \cite[Thm. 3.21]{Sai90b} carries over to the real case and we see that the filtered $\Dmod_X$-module $(\Dmod_X\cdot \cV^{\lee 0}, F_\bullet)$, underlies the Hodge module uniquely extending the VHS as in Theorem \ref{thm:SaiStr}. We denote it by $j_{!*}V$ and refer it as the minimal extension of $V$.

\subsection{Deligne lattice of mixed type and extension of scalars}
We assume that $\cV$ is a flat vector bundle on $X\setminus D$, where $X$ is a complex manifold and $D$ a normal crossing divisor as before. First, we introduce Deligne lattices of mixed-type. 

By the semicontinuity of Deligne lattices $\cV^{\lee B}$ and $\cV^{\gee B}$, we are allowed to perturb (positively or negatively) the coefficients of the index divisor $B$. To be precise, we define Deligne lattices of mixed-type as follows. If $J\subseteq I$ a subset of the index set of $D$, then for a pair of $\R$-divisors $(B_1, B_2)$ satisfying 
\begin{equation}\label{eq:pair1}
B_1=\sum_{i\in J} t_i D_i
\end{equation}
and 
\begin{equation}\label{eq:pair2}
B_2=\sum_{i\notin J}t_i D_i,
\end{equation}
we denote by $\cV^{B_1^{\gee}+B_2^{\lee}}$ the Deligne lattice satisfying 
\[t_i\le\textup{the eigenvalues of $\Res_{D_i}\nabla$}<t_i+1 \textup{ if } i\in J,\]
and 
\[t_i-1<\textup{the eigenvalues of $\Res_{D_i}\nabla$}\le t_i \textup{ if } i\notin J.\]
For instance, if $J=I$ (resp. $J$ is empty), we recover the lower (resp. upper) Deligne lattice of index $B_1$ (resp. $B_2$) .  
  
Similar to the upper or lower Deligne lattices, one sees that the flat logarithmic connections on Deligne lattices $\cV^{B_1^{\gee}+B_2^{\lee}}$ induce $V_D^0\Dmod_X$-module structures extending the nature $\cO_X$-module structures.
\begin{prop}\label{prop:shiftind}
With notations as above, for a fixed $J\subseteq I$, we have the following
\begin{enumerate}
\item{if $0$ is not an eigenvalue of the residue map along $D_i$ for some $i\in J$, then 
\[\Dmod_X\otimes_{V_D^0\Dmod_X}\cV^{B_1^{\gee}+B_2^{\lee}}=\Dmod_X\otimes_{V_D^0\Dmod_X}\cV^{(B_1+ a\cdot D_i)^{\gee}+B_2^{\lee}}\]
for $a\in \R$.}
\item{if $0$ is not an eigenvalue of the residue map along $D_i$ for some $i\notin J$, then 
\[\Dmod_X\otimes_{V_D^0\Dmod_X}\cV^{B_1^{\gee}+B_2^{\lee}}=\Dmod_X\otimes_{V_D^0\Dmod_X}\cV^{B_1^{\gee}+(B_2+a\cdot D_i)^{\lee}}\]
for $a\in \R$.}
\item{ If $B_1+\sum_{i\in J}D_i\le0$ and $B_2<0$, then 
\[\Dmod_X\otimes_{V_D^0\Dmod_X}\cV^{B_1^{\gee}+B_2^{\lee}}=\cV(*D)\]}
\item{ If $B_1>0$, then if $i\in J$, we have
\[\Dmod_X\otimes_{V_D^0\Dmod_X}\cV^{B_1^{\gee}+B_2^{\lee}}=\Dmod_X\otimes_{V_D^0\Dmod_X}\cV^{(B_1+a\cdot D_i)^{\gee}+B_2^{\lee}}\]
for $a\in \R^+$.}
\item{ If $B_1+\sum_{i\in J}D_i\le0$, then if $i\in J$, we have
\[\Dmod_X\otimes_{V_D^0\Dmod_X}\cV^{B_1^{\gee}+B_2^{\lee}}=\Dmod_X\otimes_{V_D^0\Dmod_X}\cV^{(B_1+a\cdot D_i)^{\gee}+B_2^{\lee}}\]
for $a\in \R^-$.}
\item{ If $B_2\ge \sum_{i\notin J}D_i$, then if $i\in I\setminus J$, we have
\[\Dmod_X\otimes_{V_D^0\Dmod_X}\cV^{B_1^{\gee}+B_2^{\lee}}=\Dmod_X\otimes_{V_D^0\Dmod_X}\cV^{B_1^{\gee}+(B_2+a\cdot D_i)^{\lee}}\]
for $a\in \R^+$.}
\item{ If $B_2<0$, then if $i\in I\setminus J$, we have
\[\Dmod_X\otimes_{V_D^0\Dmod_X}\cV^{B_1^{\gee}+B_2^{\lee}}=\Dmod_X\otimes_{V_D^0\Dmod_X}\cV^{B_1^{\gee}+(B_2+a\cdot D_i)^{\lee}}\]
for $a\in \R^-$.}
\end{enumerate}
\end{prop}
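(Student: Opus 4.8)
The plan is to prove all seven statements by a single local computation that reduces everything to the one‑variable case along a single component $D_i$, using the standard structure of the $V$‑filtration and the Bernstein–Sato‑type relations among the graded pieces. Since the question is local, I would fix a polydisk $\Delta^n$ with coordinates $(z_1,\dots,z_n)$, write $D=(z_1\cdots z_r=0)$, and recall from Proposition \ref{prop:vfDe} that each Deligne lattice of mixed type is an intersection of $V$‑filtration steps $V^{\alpha_i}_{D_i}\cV(*D)$ (with the convention that for $i\in J$ one uses the $\geq$ normalization and for $i\notin J$ the $\leq$ one, which just shifts the index by $1-\epsilon$). The point is that applying $\Dmod_X\otimes_{V^0_D\Dmod_X}(-)$ to a $V^0_D\Dmod_X$‑coherent module amounts to generating the whole $\Dmod_X$‑module by repeatedly applying the vector fields $\partial_{z_i}$ transverse to the $D_i$, and these only move the index along the single direction $D_i$.

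The key step is the one‑variable lemma: fix $i$ and work with the $V$‑filtration $V^\bullet_{D_i}\cV(*D)$. If $0$ is \emph{not} an eigenvalue of $\Res_{D_i}\nabla$, then $z_i$ acts invertibly on every graded piece $\gr^\alpha_{V_{D_i}}$ with $\alpha\in\Z$ — equivalently $z_i\colon V^\alpha\to V^{\alpha+1}$ and $\partial_{z_i}\colon V^{\alpha}\to V^{\alpha-1}$ together with the relation $\partial_{z_i}z_i=z_it\partial_t$‑type identity force $\Dmod_X\cdot V^\alpha=\Dmod_X\cdot V^\beta$ for all $\alpha,\beta$ lying in intervals not straddling an integer; this is exactly what is needed for items (1) and (2), where the hypothesis "$0$ is not an eigenvalue along $D_i$" guarantees we may slide the $D_i$‑coefficient freely by any $a\in\R$. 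Items (4)–(7) are the half‑sided versions: when $B_1>0$ (so the relevant indices along $D_i$ for $i\in J$ are in the range $(0,\ldots]$, where $t\partial_t$ is injective, i.e. $\partial_t$ is "surjective enough"), one can push the index \emph{up} freely ($a\in\R^+$) because $\partial_{z_i}$ already produces everything below; symmetrically, when $B_1+\sum_{i\in J}D_i\le 0$ one can push \emph{down} ($a\in\R^-$) because $z_i$ is injective in that range and the generated $\Dmod$‑module is unchanged. Item (3) is the degenerate extreme: when $B_1+\sum_{i\in J}D_i\le 0$ and $B_2<0$, every index along every $D_i$ lies in a range where the lattice is already "large enough" that generating by $\Dmod_X$ recovers the full meromorphic extension $\cV(*D)$ — one proves this by applying items (4)/(5) (for $i\in J$) and items (6)/(7) (for $i\notin J$) repeatedly to slide all coefficients to $-\infty$, i.e. to $\cV^{\ll 0}$, and then invoking \eqref{eq:meroext}.

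Concretely I would carry out the steps in this order. First, reduce to $n=1$ and a single $D_i$: the mixed‑type lattice is the intersection over $i$ of $V$‑steps, and $\Dmod_X\otimes_{V^0_D\Dmod_X}(-)$ commutes with changing one index at a time because the operators $\partial_{z_j}$, $j\ne i$, and $z_j$, $j\ne i$, already lie in $V^0_D\Dmod_X$ and do not affect the $D_i$‑direction. Second, establish the one‑variable sliding lemma in its three flavors (two‑sided under the "non‑eigenvalue" hypothesis; up‑sided; down‑sided) by the explicit identities $z\partial_z = \partial_z z - 1$ acting on $\gr$ and the Jordan‑block description of $\Res\nabla$ from the nilpotent orbit theorem (or just from the known structure of $V$‑filtrations of regular holonomic modules). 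Third, deduce (1)–(2) directly, (4)–(7) by keeping track of which half‑line of $a$ preserves the generating set, and (3) by iterating (4)–(7) to send all indices below any integer and then applying the definition \eqref{eq:meroext} of $\cV(*D)$.

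The main obstacle I anticipate is bookkeeping the compatibility of the $V$‑filtrations along the different components $D_i$ simultaneously — i.e. justifying rigorously that the mixed‑type lattice $\cV^{B_1^{\gee}+B_2^{\lee}}$ really is the iterated intersection $\bigcap_i V^{\alpha_i}_{D_i}\cV(*D)$ and that $V^0_D\Dmod_X$‑coherence behaves well under these intersections, so that "change one index" is a legitimate operation. In the normal crossing case this multivariable compatibility is standard (it is built into Saito's construction and into Proposition \ref{prop:vfDe}), but one has to state it carefully; once it is in place, each of the seven assertions is a one‑line consequence of the corresponding one‑dimensional sliding statement, with the sign of the allowed perturbation $a$ dictated by whether $z_i$ or $\partial_{z_i}$ is the "surjective" direction in the relevant index range.
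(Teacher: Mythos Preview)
Your strategy is sound and will prove the proposition, but it is organized differently from the paper's own argument. The paper proceeds by \emph{induction on the rank} of the underlying local system, reducing to the rank-one case where everything is an explicit local computation (with references to \cite[\S IV.2]{Bj} and \cite[\S3--4]{WuThesis}). You instead keep the rank arbitrary and reduce along the \emph{divisor}: fix all components except one $D_i$ and use the standard $V$-filtration identities ($z_i$ bijective on $\gr^\alpha$ for $\alpha\neq 0$, $\partial_{z_i}$ bijective on $\gr^\alpha$ for $\alpha\neq -1$, etc.) to slide the $D_i$-index. Your route is more conceptual and avoids the rank induction entirely, at the cost of invoking the general $V$-filtration machinery; the paper's route is more elementary but requires checking that the rank induction interacts correctly with the mixed-type lattices. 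Either way the content is the same local computation.

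Two small points to tighten. First, your phrase ``reduce to $n=1$'' is a slight overstatement: you are not reducing the dimension of $X$, only isolating a single component $D_i$ while the other coordinates remain as parameters; make this explicit so the reader does not look for a genuine dimension reduction. Second, your derivation of (3) by ``iterating (4)--(7) to send all indices to $-\infty$'' needs one extra sentence: the equalities in (5) and (7) are between abstract tensor products $\Dmod_X\otimes_{V^0_D\Dmod_X}(-)$, so to conclude that the common value is $\cV(*D)$ you should observe that the natural map $\Dmod_X\otimes_{V^0_D\Dmod_X}\cV^{B_1^{\gee}+B_2^{\lee}}\to\cV(*D)$ is surjective once some index is strictly negative (since then $\partial_{z_i}$ genuinely creates higher-order poles), and that it is injective because $\cV(*D)$ has no $\Dmod_X$-torsion supported on $D$. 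With those two clarifications your argument is complete.
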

\begin{proof}
All the statements follows by induction on the rank of the underlying local system and in rank 1 case follows from local computation. The details are left to interested readers. See also \cite[\S IV 2]{Bj} and \cite[\S 3 and 4]{WuThesis} 
\end{proof}

\subsection{Intermediate extensions and Mixed Hodge modules}
In this section, we assume $V=(\cV, F_\bullet, \VV_\R)$ is a polarizable $\R$-VHS on $X\setminus D$, where $X$ is a complex manifold and $D$ a normal crossing divisor as before. 

By Lemma \ref{lem:main} and the construction, the filtration 
$$\{F_\bullet^{B_1^{\gee}+B_2^{\lee}}=j_*F_\bullet\cap \cV^{B_1^{\gee}+B_2^{\lee}}\}$$ is a subbundle filtration on $\cV^{B_1^{\gee}+B_2^{\lee}}$.

Naturally, the $\Dmod_X$-module $\Dmod_X\otimes_{V_D^0\Dmod_X}\cV^{B_1^{\gee}+B_2^{\lee}}$ has a filtration given by convolution
\begin{equation}\label{eq:cf}
F_p(\Dmod_X\otimes_{V_D^0\Dmod_X}\cV^{B_1^{\gee}+B_2^{\lee}})=\sum_i F_{p-i}\Dmod_X\otimes F_i^{B_1^{\gee}+B_2^{\lee}}.
\end{equation}

Saito \cite{Sai90b} constructed mixed Hodge module in rational case originally. But the construction does not depend on the defining fields. Therefore, $\R$-mixed Hodge module can be defined exactly in the same way; see \S2.d in $loc.$ $cit.$ In particular we have functors of extensions over analytic subspaces. To be precise, if $j_Z: X\setminus Z\to X$ the open embedding for a codimension $1$ analytic subspace $Z\subset X$ and $M$ a $\R$-mixed Hodge module on $X$, then $j_{Z*}j_Z^{-1}(M)$ and $j_{Z!}j_Z^{-1}(M)$ are all $\R$-mixed Hodge module and the functors $j_{Z*}j_Z^{-1}(\bullet)$ and $j_{Z!}j_Z^{-1}(\bullet)$ are compatible with the corresponding functors on perverse sheaves.

In our case, we would discuss various mixed Hodge modules constructed from $V$, the polarizable VHS, using the functors $j_{Z*}\circ j_Z^{-1}$ and $j_{Z!}\circ j_Z^{-1}$. 

For $J\subseteq I$, we set $D_J=\sum_{i\in J}D_i$, $j_{1_J}\colon X\setminus D\to X\setminus D_J$ and $j_{2_J}\colon X\setminus D_J\to X$. Then by Theorem \ref{thm:SaiStr}, for every $J\subseteq I$, we get the mixed Hodge module ${j_{1_J}}_*{j_{2_J}}_!V$.
\begin{prop}\label{prop:intermex}
With above notations, for every $J\subseteq I$, we have that
\[(\Dmod_X\otimes_{V_D^0\Dmod_X}\cV^{(-D_J)^{\gee}+(0\cdot D_{I\setminus J})^{\lee}}, F_\bullet, {Rj_{1_J}}_*{j_{2_J}}_!\VV_\R[n])\]
is the mixed Hodge module ${j_{1_J}}_*{j_{2_J}}_!V$, where the filtration is given by convolution as in \eqref{eq:cf}. 
%In particular, the rational mixed Hodge module ${j_{1_J}}_*{j_{2_J}}_!\Q^H$ is 
%\[(\Dmod_X\otimes_{V_D^0\Dmod_X}\cO_X(D_J), F_\bullet, {Rj_{1_J}}_*{j_{2_J}}_!\Q[n]),\]
%where $\Q^H$ is trivial rational Hodge module on $X\setminus D$.
\end{prop}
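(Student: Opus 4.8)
The plan is to verify the three defining properties of a mixed Hodge module directly on the proposed triple, reducing everything to the already-established pure case (the minimal extension $j_{!*}V$) and the exactness/compatibility properties of the functors $j_{Z*}\circ j_Z^{-1}$ and $j_{Z!}\circ j_Z^{-1}$ on the category of $\R$-mixed Hodge modules. Write $M_J := (\Dmod_X\otimes_{V_D^0\Dmod_X}\cV^{(-D_J)^{\gee}+(0\cdot D_{I\setminus J})^{\lee}}, F_\bullet, Rj_{1_J}{}_*j_{2_J}{}_!\VV_\R[n])$ for the candidate. First I would check that the underlying perverse sheaf is correct: $j_{1_J}$ is an affine (Stein) open immersion onto $X\setminus D_J$ so $Rj_{1_J}{}_*$ is perverse $t$-exact up to shift, and $j_{2_J}{}_!$ of the intermediate/minimal extension is perverse; combined with the Riemann--Hilbert correspondence this identifies $\DR$ of the $\Dmod$-module with $Rj_{1_J}{}_*j_{2_J}{}_!\VV_\R[n]\otimes\bbC$. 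The key $\Dmod$-module input is that $\Dmod_X\otimes_{V_D^0\Dmod_X}\cV^{(-D_J)^{\gee}+(0\cdot D_{I\setminus J})^{\lee}}$ has the right Riemann--Hilbert image; here I would use Proposition~\ref{prop:shiftind} to move the indices into the ``canonical'' range and then compute nearby/vanishing cycles along each $D_i$: for $i\in J$ the lower-type index $-D_i$ records a $!$-extension along $D_i$ (the lattice $\cV^{\gee -D_i}$ is $\cV^{\gee 0}(-D_i)$ by Corollary~\ref{cor:semiconfil}(1), which is the $V$-filtration recipe for $j_!$), while for $i\notin J$ the upper-type index $0$ records the $*$-type minimal/$*$-extension.

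Next I would address the filtration. The convolution filtration \eqref{eq:cf} is coherent over $F_\bullet\Dmod_X$ because $F_\bullet^{(-D_J)^{\gee}+(0\cdot D_{I\setminus J})^{\lee}}$ is a subbundle filtration of the Deligne lattice by Lemma~\ref{lem:main} (together with the remark that subbundle filtrations on the $\cV^{B_1^{\gee}+B_2^{\lee}}$ hold in the mixed-type case too). The crucial point is that this filtered $\Dmod_X$-module is the one attached to the mixed Hodge module $j_{1_J}{}_*j_{2_J}{}_!V$, i.e. that the Hodge filtration produced by Saito's functors agrees with the convolution filtration. I would prove this by the same strategy as in \cite[Thm.~3.21, \S2.d]{Sai90b}: one knows that $j_{2_J}{}_!V$ on $X\setminus D_J$ has underlying filtered $\Dmod$-module given by convolution from the minimal extension $j_{!*}$, which is exactly the content of the pure case already recalled before Lemma~\ref{lem:birSV}; then applying $j_{1_J}{}_*$ — which on the $\Dmod$ side is the meromorphic localization along $D_{I\setminus J}$ with its canonical filtration — yields precisely the filtration \eqref{eq:cf} by the compatibility of $V$-filtrations along the components of $D_{I\setminus J}$ with the Hodge filtration. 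An induction on $|I|$, peeling off one component of $D$ at a time and alternating the $j_!$ (for $i\in J$) and $j_*$ (for $i\notin J$) steps, reduces everything to the one-component case, which is Saito's original normal-crossing computation now valid over $\R$ because the structure theorem Theorem~\ref{thm:SaiStr} and the nilpotent/$SL_2$-orbit input hold over $\R$.

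Finally, the mixed-Hodge-module axioms (existence of a weight filtration, admissibility, and the inductive conditions on nearby and vanishing cycles) are inherited: $V$ is a pure polarizable VHS, hence $j_{!*}V$ is a pure Hodge module; the functors $j_{Z*}\circ j_Z^{-1}$ and $j_{Z!}\circ j_Z^{-1}$ preserve the category of $\R$-mixed Hodge modules (this is quoted in the paragraph preceding the proposition), so $j_{1_J}{}_*j_{2_J}{}_!V$ is automatically an $\R$-MHM; what remains is only to match its underlying filtered $\Dmod$-module and perverse sheaf with the explicit triple $M_J$, which is what the previous two paragraphs accomplish. I expect the main obstacle to be the identification of the Hodge filtration — showing the abstract filtration coming out of Saito's $j_*$, $j_!$ functors literally equals the convolution filtration \eqref{eq:cf} built from the mixed-type Deligne lattice — because this is where one must carefully track the compatibility of the $V$-filtrations along the several components of $D$ with $F_\bullet$ under repeated application of the extension functors, and where Proposition~\ref{prop:shiftind} is needed to normalize indices. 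Everything else is a routine translation of \cite{Sai90b} from $\Q$ to $\R$, using that the orbit theorems are available over $\R$ by \cite[App.]{SV11} and \cite{PPS}.
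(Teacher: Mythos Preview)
Your plan is sound and close in spirit to the paper's own argument: both proceed by induction and reduce to Saito's normal-crossing description of mixed Hodge modules in \cite[\S2.e, \S3.a]{Sai90b}. The paper frames the induction through Beilinson's gluing construction---reconstructing ${j_{1_J}}_*{j_{2_J}}_!V$ from its nearby cycles along each $D_i$ and observing that these nearby cycles remain of normal-crossing type with respect to the induced stratification of $(X,D)$---whereas you peel off components one at a time via iterated applications of $j_*$ and $j_!$; these are equivalent organizational devices in the normal-crossing setting, and the paper simply defers the combinatorial bookkeeping to Saito.

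Two small corrections to your write-up. First, Corollary~\ref{cor:semiconfil}(1) gives $\cV^{\gee -D_i} = \cV^{\gee 0}(D_i)$, not $\cV^{\gee 0}(-D_i)$. Second, your assignment of $!$ to the components $i\in J$ and $*$ to $i\notin J$ appears to be reversed: the paper records (in the paragraph following Corollary~\ref{cor:logHDR}, invoking Proposition~\ref{prop:shiftind}(3)) that in the extreme case $J=I$, where the lattice is $\cV^{(-D)^{\gee}}$, one obtains the $*$-extension $j_*V$ with underlying $\Dmod$-module the meromorphic extension $\cV(*D)$. So the lower index $-D_J$ encodes the $*$-side and the upper index $0\cdot D_{I\setminus J}$ the $!$-side. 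Neither slip affects the structure of your inductive argument, but you should straighten this out before invoking the $V$-filtration recipes.
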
 
\begin{proof}
The mixed Hodge module ${j_{1_J}}_*{j_{2_J}}_!V$ can be glued from nearby cycles of $V$ along each $D_i$ using Beilinson functors. One observes that all the nearby cycles are of the same type as ${j_{1_J}}_*{j_{2_J}}_!V$ (they are all of normal crossing type) using the natural stratification of $(X,D)$. Therefore, the conclusion follows by induction and the combinatorial data of the mixed Hodge module and its nearby cycles with respect to the stratification. Let us refer to \cite[\S2.e and \S3.a]{Sai90b} for details. 
\end{proof}
\subsection{Logarithmic de Rham complexes and filtered comparisions}
In this section, we present comparisons between Deligne lattices and intermediate extensions in the sense of de Rham complexes. 

We first recall the definition of logarithmic $de$ $Rham$ complexes. Suppose $\cM$ is a $\cO_X$-module with a flat logarithmic connection $\nabla$ on $X$ with logarithmic poles along $D$, where $X$ is a complex manifold and $D$ a normal crossing divisor as before. The $logarithmic$ $de$ $Rham$ complex of $\cM$ is the complex starting from the $-n$-term
\[\DR_D(\cM)\colon=[\cM\to \Omega^1_X(\log D)\otimes \cM\to \cdots\to \Omega^n(\log D)\otimes \cM].\] 
If moreover $\cM$ is filtered with filtration $F_\bullet$ (assume that the filtration $F_\bullet\cM$ is compatible with the order filtration on $V^0_D\Dmod_X$), then $\DR_D(\cM)$ has the filtration
\[F_\bullet \DR_D(\cM)=[F_\bullet\cM\to \Omega^1_X(\log D)\otimes F_{\bullet+1}\cM\to \cdots\to \Omega^n(\log D)\otimes F_{\bullet+n}\cM.\]
If $D$ is empty, then it recovers the (filtered) $de$ $Rham$ complex for (filtered) $\Dmod_X$-modules.

\begin{prop}\label{prop:maincomp}
Assume $V=(\cV, F_\bullet, \VV_{\R})$ is a polarizable VHS on $X\setminus D$. For a subset $J\subset I$ with a pair $(B_1, B_2)$ as in \eqref{eq:pair1} and \eqref{eq:pair2}, we have canonical quasi-isomorphisms
\[\DR_D(\cV^{(B_1+D_J)^{\gee}+(B_2+D_{I\setminus J})^{\lee}})\to \DR(\Dmod_X\otimes_{V_D^0\Dmod_X}\cV^{B_1^{\gee}+B_2^{\lee}}),\]
and 
\[F_p \DR_D(\cV^{(B_1+D_J)^{\gee}+(B_2+D_{I\setminus J})^{\lee}})\to F_p\DR(\Dmod_X\otimes_{V_D^0\Dmod_X}\cV^{B_1^{\gee}+B_2^{\lee}})\]
for every $p$.
\end{prop}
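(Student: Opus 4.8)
The plan is to reduce to a local computation on a polydisk, where the normal-crossing structure allows a completely explicit description of both sides, and to exploit the multi-filtration compatibility already established (Lemma \ref{lem:main} and Corollary \ref{cor:semiconfil}). First I would recall the standard fact — going back to Deligne — that for a logarithmic lattice $\cM$ of a regular holonomic $\Dmod_X$-module, the natural morphism $\DR_D(\cM)\to\DR(\Dmod_X\otimes_{V^0_D\Dmod_X}\cM)$ is a quasi-isomorphism \emph{provided} the residues $\Res_{D_i}\nabla$ have no eigenvalue in $\Z_{>0}$ (equivalently, no strictly negative eigenvalue, depending on the sign convention), so that multiplication by $z_i$ or by $\partial_i$ is suitably injective/surjective on graded pieces. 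With the index shift $(B_1+D_J)^{\gee}+(B_2+D_{I\setminus J})^{\lee}$ against $B_1^{\gee}+B_2^{\lee}$ one checks the eigenvalue ranges line up: along $D_i$ with $i\in J$ the relevant lattice $\cV^{(B_1+D_J)^{\gee}+\cdots}$ has $\Res_{D_i}\nabla$-eigenvalues in $[t_i+1,t_i+2)$, i.e. strictly positive after the normalization making $B_1^{\gee}$ the $V^{\bullet}$-step, and similarly for $i\notin J$; this is exactly the numerical hypothesis making the Koszul-type complex computing the de Rham complex of the meromorphic module acyclic off $\cM$. So the unfiltered statement follows from the ordinary Deligne comparison applied component-by-component, using Proposition \ref{prop:vfDe} to identify the intersected Deligne lattices with intersections of $V$-filtration steps and Proposition \ref{prop:shiftind} to see the meromorphic $\Dmod_X$-module on the right is the one generated by the shifted lattice.

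For the filtered statement the point is strictness: I would show that on each graded piece $\gr^F_p$ the morphism of complexes is a quasi-isomorphism, and then conclude that the filtered morphism is a filtered quasi-isomorphism. By Lemma \ref{lem:main} the filtration $F_\bullet^{B_1^{\gee}+B_2^{\lee}}$ is a subbundle filtration, hence $\gr^F_\bullet$ of the logarithmic de Rham complex is just the Higgs complex of the associated graded Higgs bundle $(\cE,\theta)$ attached to the VHS — here the byproduct isomorphism $\cE^B\simeq\gr^F_\bullet\cV^{\gee B}$ noted after Lemma \ref{lem:main} is what I would cite. On the right-hand side, the convolution filtration \eqref{eq:cf} has associated graded equal to $\gr^F_\bullet\Dmod_X\otimes\gr^F_\bullet\cV^{B_1^{\gee}+B_2^{\lee}}$, and $\DR$ of this is again a Higgs/Koszul complex built from the symbols of $\partial_t$ and of $\theta$. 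So on graded pieces both sides become explicit Koszul complexes over $\gr^F\cO_X$-modules, and the comparison reduces to a linear-algebra acyclicity statement for the logarithmic-versus-full Koszul complex of the nilpotent Higgs field $\theta$, with the same eigenvalue bookkeeping as before guaranteeing the extra terms are acyclic. One should also treat $z_i\partial_i$ rather than $\partial_i$ along the $D_i$-directions, which is precisely why the logarithmic complex (not the ordinary one) appears on the left; this is handled by the standard observation that $z_i\partial_i$ acts invertibly on $\gr^{V}_{D_i}$ away from eigenvalue $0$, and the index shift pushes us away from $0$.

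The main obstacle I anticipate is the multi-index bookkeeping: one is simultaneously taking $V$-filtrations along all the $D_i$, with $i\in J$ treated by a $\gee$-condition (hence a shift by $+D_i$, i.e. by $\partial_{z_i}$-type acyclicity) and $i\notin J$ by a $\lee$-condition (shift by $z_i$-type acyclicity), and one must make sure these local reductions are compatible with each other and with passage to graded pieces — i.e. that the relevant double/multiple complex is acyclic in the correct range for \emph{every} combination of directions at once. The clean way to organize this is by induction on $|I|$, peeling off one component $D_i$ at a time: use the compatibility of $\cV^{B_1^{\gee}+B_2^{\lee}}$ with the $V$-filtration along $D_i$ (Proposition \ref{prop:vfDe}, together with the fact from Lemma \ref{lem:main} that $F_\bullet$ is bi-strict with respect to the several $V$-filtrations) to split off the $D_i$-direction, apply the one-variable Deligne comparison there, and feed the remaining $(r-1)$-component situation to the inductive hypothesis; the filtered refinement runs in parallel because each step preserves strictness of $F_\bullet$. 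Everything else — the explicit Riemann-integral description of the lattices, local freeness, the identification of the meromorphic module on the right — is already available from \S2.1–\S2.4 and is routine to invoke.
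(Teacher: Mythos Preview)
Your plan is sound in spirit but organized quite differently from the paper. The paper does not treat the unfiltered and filtered statements separately, invokes no Deligne eigenvalue criterion, and does not induct on $|I|$: it reduces immediately to $\gr^F_\bullet$ (a quasi-isomorphism on each graded piece forces a filtered quasi-isomorphism, hence in particular the unfiltered one) and then gives a single Spencer-type argument. Writing $\cE_{J,\bullet}=\gr^F_\bullet$ of the left-hand lattice, $\widetilde\cE_{J,\bullet}=\gr^F_\bullet$ of the right-hand $\Dmod$-module, and $\cB_\bullet=\gr^F_\bullet\Dmod_X\otimes_{\gr^F_\bullet\cO_X}\cE_{J,\bullet}$, one forms the Koszul complex $\cC^\bullet_\bullet$ of $\cB_\bullet$ for the commuting operators $x_i\partial_{x_i}\otimes1-1\otimes x_i\partial_{x_i}$ ($i\le r$) and $\partial_{x_i}\otimes1-1\otimes\partial_{x_i}$ ($i>r$); local freeness of $\cE_{J,\bullet}$ from Lemma~\ref{lem:main} alone makes $\cC^\bullet_\bullet\to\widetilde\cE_{J,\bullet}$ a resolution, and tensoring over $\gr^F_\bullet\Dmod_X$ with the resolution $\gr^F_\bullet\DR(\Dmod_X)\to\omega_X\otimes\gr^F_\bullet\cO_X$ identifies both sides of the comparison at once. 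Your inductive peeling of components via the one-variable Deligne comparison should also succeed, but be aware that the comparison you quote has the \emph{same} lattice on both sides, whereas here the two lattices differ by the $+D$ shift; you would need either the refined form with that shift built in, or an extra step showing the two induced $\Dmod$-modules agree---and Proposition~\ref{prop:shiftind} as stated does not supply this without hypotheses on $B_1,B_2$. The paper's route sidesteps this entirely and handles all directions in one stroke; yours is closer to Saito's bi-strictness arguments and would make the interaction of $F_\bullet$ with the several $V$-filtrations more explicit, at the cost of length and of having to justify the unfiltered step with more care than you have sketched.
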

\begin{proof}
It is enough to construct a graded quasi-isomorphism 
\[\gr_\bullet^F \DR_D(\cV^{(B_1+D_J)^{\gee}+(B_2+D_{I\setminus J})^{\lee}})\to\gr_\bullet^F\DR(\Dmod_X\otimes_{V_D^0\Dmod_X}\cV^{B_1^{\gee}+B_2^{\lee}}).\]
To this end, we first set 
$$\cE_{J,\bullet}=\gr_\bullet^F\cV^{(B_1+D_J)^{\gee}+(B_2+D_{I\setminus J})^{\lee}}, \widetilde\cE_{J,\bullet}=\gr_\bullet^F(\Dmod_X\otimes_{V_D^0\Dmod_X}\cV^{B_1^{\gee}+B_2^{\lee}}),$$
and 
$\cB_\bullet=\gr_\bullet^F\Dmod_X\otimes_{\gr_\bullet^F\cO_X}\cE_{J,\bullet}$ for simplicity.
Consider the graded complex 
\[\cC^\bullet_\bullet\colon\cB_{\bullet-n}\otimes\bigwedge^n\cT_X(-\log D)\to \cB_{\bullet-n+1}\otimes\bigwedge^{d-1}\cT_X(-\log D)\to\cdots\to \cB_\bullet,\]
which is locally the Koszul complex of $\cB_\bullet$ together with a sequence of actions 
$$x_1\partial_{x_1}\otimes 1-1\otimes x_1\partial_{x_1}, \dots,x_r\partial_{x_r}\otimes 1-1\otimes x_r\partial_{x_r}, \partial_{x_{r+1}}\otimes 1-1\otimes\partial_{x_{r+1}},\dots ,$$ 
where $\cT_X(-\log D)$ is the sheaf of holomorphic vector fields with logarithmic zeros along $D$, locally generated freely by $x_1\partial_{x_1}, \dots,x_r\partial_{x_r}, \partial_{x_{r+1}},\dots \partial_{x_n}.$ 

By Lemma \ref{lem:main}, we know $\cE_{J,\bullet}$ is locally free over $\gr_\bullet^F\cO_X$. Using local freeness of $\cE_{J,\bullet}$, one can check that the natural graded morphism 
\[\cC^\bullet_\bullet\to \widetilde\cE_{J,\bullet}\]
is quasi-isomorphic. On the other hand, since $\gr_\bullet^F\DR(\Dmod_X)$ is a resolution of $\omega_X\otimes_\cO\gr_\bullet^F\cO_X$, we know that
\[\gr_\bullet^F\DR(\Dmod_X)\overset{\bf L}\otimes_{\gr_\bullet^F\Dmod_X}\cC_\bullet^\bullet=\gr_\bullet^F\DR(\Dmod_X)\otimes_{\gr_\bullet^F\Dmod_X}\cC_\bullet^\bullet\]
is graded quasi-isomorphic to $\gr_\bullet^F \DR_D(\cV^{(B_1+D_J)^{\gee}+(B_2+D_{I\setminus J})^{\lee}})$. Therefore, we know $\gr_\bullet^F \DR_D(\cV^{(B_1+D_J)^{\gee}+(B_2+D_{I\setminus J})^{\lee}})$ is quasi-isomorphic to $$\gr_\bullet^F\DR(\Dmod_X)\otimes_{\gr_\bullet^F\Dmod_X}\widetilde\cE_{J,\bullet}.$$
Now, we finish the proof by observing the identification 
$$\gr_\bullet^F\DR(\Dmod_X)\otimes_{\gr_\bullet^F\Dmod_X}\widetilde\cE_{J,\bullet}\simeq \gr_\bullet^F\DR(\Dmod_X\otimes_{V_D^0\Dmod_X}\cV^{B_1^{\gee}+B_2^{\lee}}).$$
\end{proof}

\subsection{Twisted VHS and twisted Hodge modules}
We now discuss twisted VHS, their (filtered) Deligne lattices and twisted Hodge modules.  

Suppose that $V=(\cV, F_\bullet, \VV_\R)$ is a polarizable VHS on $U=X\setminus D$, where $X$ is a complex manifold with a normal crossing divisor $D$ as always. Let $\cL_U$ be a torsion line bundle on $U$ so that 
\[\cL_U^m\simeq\cO_U.\]
The $m$-th \'etale covering $\pi\colon U_{\cL_U}\to U$ of $\cL_U$ gives a  VHS (rational indeed)
\[V_{\cL_U}=(\bigoplus_{i=0}^{m-1}\cL_U^{-i}, F_\bullet, \pi_*\R_{U_{\cL_U}}),\]
with the filtration given by $F_p\cL_U^{-i}=\cL_U^{-i}$ for $p\ge 0$ and 0 otherwise. Then we have two new polarizable VHS given by tensor-products, 
$$V\otimes V_{\cL_U}=(\cV\otimes\bigoplus_{i=0}^{m-1}\cL_U^{-i}, F_\bullet, \VV_\R\otimes \pi_*\R_{U_{\cL_U}}),$$
and
$$V\otimes V^{*}_{\cL_U}=(\cV\otimes\bigoplus_{i=0}^{m-1}\cL_U^{i}, F_\bullet, \VV_\R\otimes (\pi_*\R_{U_{\cL_U}})^{*}),$$ 
where $V^{*}_{\cL_U}$ is the dual VHS and $(\pi_*\R_{U_{\cL_U}})^{*}$ is the dual local system, and the twisted VHS
\[V^i_{\cL_U}=(\cV\otimes\cL_U^{i},F_\bullet, \VV_\R\otimes L_U^i)\]
where $L_U^i$ is the rank-1 unitary representation associated to $\cL_U^{i}$ for $i=\pm1,\dots,\pm (m-1)$ and the filtrations are given by convolution. 

By considering direct summands of Deligne lattices and (mixed) Hodge modules associated to $V\otimes V_{\cL_U}$, it now makes sense to say twisted Deligne lattices and twisted Hodge modules (minimal or intermediate extensions) associated to $V^i_{\cL_U}$ with obvious definitions. 

One of the most important properties of mixed Hodge modules is that the Hodge filtration $F_\bullet$ is strict under direct image functors for projective morphisms; see \cite[Thm 2.14]{Sai90b}. Since the proof in $loc.$ $cit,$ does not depend on the defining field of the perverse sheaves, the strictness property still holds for real Mixed Hodge modules. When the base space is projective, strictness of the direct image with respect to the constant morphism is equivalent to the $E_1$-degeneration of the Hodge-to-de-Rham spectral sequence. In particular, we get the following useful lemma for the intermediate extensions of $V^i_{\cL_U}$ thanks to Proposition \ref{prop:intermex}.
\begin{lemma}\label{lem:H-DR}
Suppose that $X$ is projective manifold with a simple normal crossing divisor $D=\sum_{i\in I}D_i$. Let $V=(\cV, F_\bullet, \VV_\R)$ be a polarizable VHS on $U=X\setminus D$ and let $\cL_U$ be a torsion line bundle on $U$ so that $\cL_U^m\simeq\cO_U$.  For each $J\subseteq I$ and $i$, we have that the Hodge-to-de-Rham spectral sequence
\[E_1^{p,q}=\mathbb H^{p+q}(X,\gr^F_{-q}\DR(\cM_i))\Rightarrow \mathbb H^{p+q}(X,\DR(\cM_i))\]
degenerates at $E_1$, where 
$$(\cM_i, F_\bullet)=(\Dmod_X\otimes_{V_D^0\Dmod_X}(\cV\otimes \cL_U^{i})^{(-D_J)^{\gee}+(0\cdot D_{I\setminus J})^{\lee}},F_\bullet),$$ 
that is, $(\cM_i,F_\bullet)$ underlies the the twisted intermediate extension ${j_{1_J}}_*{j_{2_J}}_!V^i_{\cL_U}$.
\end{lemma}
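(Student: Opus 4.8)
The plan is to derive the $E_1$-degeneration from Saito's strictness theorem for direct images of mixed Hodge modules under projective morphisms, combined with the explicit description of the underlying filtered $\Dmod_X$-module furnished by Proposition \ref{prop:intermex}. First I would invoke the fact recalled just before the statement: for a projective manifold $X$ and a filtered $\Dmod_X$-module $(\cM,F_\bullet)$ underlying a real mixed Hodge module on $X$, strictness of the direct image of $(\cM,F_\bullet)$ along the structure map $a_X\colon X\to\mathrm{pt}$ — which holds by \cite[Thm.~2.14]{Sai90b}, the proof being insensitive to the coefficient field — is equivalent to the degeneration at $E_1$ of the Hodge-to-de-Rham spectral sequence $E_1^{p,q}=\mathbb H^{p+q}(X,\gr^F_{-q}\DR(\cM))\Rightarrow\mathbb H^{p+q}(X,\DR(\cM))$. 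So it suffices to exhibit the Hodge-to-de-Rham spectral sequence of each $\cM_i$ as a direct summand of that of a filtered $\Dmod_X$-module underlying an honest real mixed Hodge module.

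The candidate is the intermediate extension ${j_{1_J}}_*{j_{2_J}}_!(V\otimes V_{\cL_U})$ of the polarizable VHS $V\otimes V_{\cL_U}=(\cV\otimes\bigoplus_{k=0}^{m-1}\cL_U^{-k},F_\bullet,\VV_\R\otimes\pi_*\R_{U_{\cL_U}})$, which by Proposition \ref{prop:intermex} is the real mixed Hodge module with underlying filtered $\Dmod_X$-module $(\Dmod_X\otimes_{V^0_D\Dmod_X}(\cV\otimes\bigoplus_k\cL_U^{-k})^{(-D_J)^{\gee}+(0\cdot D_{I\setminus J})^{\lee}},F_\bullet)$. Since the $L^2$-construction of the Deligne lattices, the induced Hodge filtration (Lemma \ref{lem:main}), the functor $\Dmod_X\otimes_{V^0_D\Dmod_X}(-)$, the convolution filtration, the de Rham functor, and taking hypercohomology are all additive, this filtered $\Dmod_X$-module — and hence its Hodge-to-de-Rham spectral sequence — splits as the direct sum over $k=0,\dots,m-1$ of the ones attached to $\Dmod_X\otimes_{V^0_D\Dmod_X}(\cV\otimes\cL_U^{-k})^{(-D_J)^{\gee}+(0\cdot D_{I\setminus J})^{\lee}}$; since $\cL_U^{-k}\cong\cL_U^{i}$ whenever $i\equiv-k\pmod m$, these run through all the $\cM_i$ occurring in the statement. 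A direct sum of spectral sequences degenerates at $E_1$ exactly when each summand does, so the degeneration for ${j_{1_J}}_*{j_{2_J}}_!(V\otimes V_{\cL_U})$ — provided by the first step — forces it for every $\cM_i$, as claimed.

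The step I expect to be the main obstacle is the bookkeeping behind this direct sum decomposition: one must verify that the splitting of $\cV\otimes\bigoplus_k\cL_U^{-k}$ into eigensheaves is compatible with the $V^0_D\Dmod_X$- and $\Dmod_X$-module structures, the convolution filtrations, and the Beilinson-type gluing used in the proof of Proposition \ref{prop:intermex} to construct ${j_{1_J}}_*{j_{2_J}}_!(-)$, so that the $k$-th summand of ${j_{1_J}}_*{j_{2_J}}_!(V\otimes V_{\cL_U})$ is indeed the twisted intermediate extension attached to $\cV\otimes\cL_U^{-k}$. This is, however, essentially formal: every operation involved is additive, and each $\cL_U^{-k}$ is a unitary flat line bundle on $U$ (its local monodromies are $m$-th roots of unity), so Lemma \ref{lem:main} and Proposition \ref{prop:intermex} apply verbatim with $\cV$ replaced by $\cV\otimes\cL_U^{-k}$.
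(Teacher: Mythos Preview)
Your argument is correct and follows the same route the paper takes: the paragraph preceding the lemma together with the definition of twisted Hodge modules (as direct summands of the honest real mixed Hodge module ${j_{1_J}}_*{j_{2_J}}_!(V\otimes V_{\cL_U})$) is exactly the reasoning you have written out, and Proposition~\ref{prop:intermex} plus Saito's strictness theorem for the constant map do the rest. One small caveat: in your final sentence, Proposition~\ref{prop:intermex} is stated for genuine polarizable real VHS, and $V^i_{\cL_U}$ is not one (its perverse sheaf has no $\R$-structure on its own); but you do not actually need this, since you have already applied Proposition~\ref{prop:intermex} to $V\otimes V_{\cL_U}$ and only take the eigensheaf splitting at the level of filtered $\Dmod_X$-modules.
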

By the filtered comparison in Proposition \ref{prop:maincomp}, we immediately obtain the $E_1$ degeneracy of the logarithmic Hodge-to-de-Rham spectral sequence.
\begin{cor}\label{cor:logHDR}
In the situation of Lemma \ref{lem:H-DR}, for each $J\subseteq I$ and $i$, we have that the logarithmic Hodge-to-de-Rham spectral sequence
\[E_1^{p,q}=\mathbb H^{p+q}\big(X,\gr^F_{-q}\DR_D\big({(\cV\otimes{\cL^i_U}})^{(0\cdot D_J)^{\gee}+D_{I\setminus J}^{\lee}}\big)\big)\]
\[\Rightarrow \mathbb H^{p+q}\big(X,\DR_D\big({(\cV\otimes{\cL^i_U}})^{(0\cdot D_J)^{\gee}+D_{I\setminus J}^{\lee}}\big)\big)\]
degenerates at $E_1$
\end{cor}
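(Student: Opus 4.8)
The plan is to deduce the statement purely formally from \lemmaref{lem:H-DR} and the filtered comparison of \propositionref{prop:maincomp}, with no new geometric input. First I would apply \propositionref{prop:maincomp} to the twisted flat bundle $\cV\otimes\cL^i_U$ with the pair $(B_1,B_2)=(-D_J,\,0\cdot D_{I\setminus J})$, so that $B_1+D_J=0$ and $B_2+D_{I\setminus J}=D_{I\setminus J}$. This yields a morphism of filtered complexes
$$\phi\colon\Big(\DR_D\big((\cV\otimes\cL^i_U)^{(0\cdot D_J)^{\gee}+D_{I\setminus J}^{\lee}}\big),\,F_\bullet\Big)\longrightarrow\big(\DR(\cM_i),\,F_\bullet\big)$$
with the property that $F_p\phi$ is a quasi-isomorphism for every $p$, where $(\cM_i,F_\bullet)$ is exactly the filtered $\Dmod_X$-module appearing in \lemmaref{lem:H-DR}.

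The second step is the standard homological observation that a morphism of filtered complexes which is an $F_p$-quasi-isomorphism for all $p$ is in particular a $\gr^F_p$-quasi-isomorphism for all $p$: this comes from the short exact sequences $0\to F_{p-1}\to F_p\to \gr^F_p\to 0$, the associated long exact hypercohomology sequences on $X$, and the five lemma (the filtration $F_\bullet$ being bounded below, convergence is not an issue). Consequently $\phi$ induces an isomorphism of the associated Hodge-to-de-Rham spectral sequences from the $E_1$-page onward, compatibly with the differentials, and an isomorphism of abutments $\mathbb{H}^\bullet\big(X,\DR_D((\cV\otimes\cL^i_U)^{(0\cdot D_J)^{\gee}+D_{I\setminus J}^{\lee}})\big)\simeq \mathbb{H}^\bullet(X,\DR(\cM_i))$ respecting the induced filtrations.

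Finally, \lemmaref{lem:H-DR} asserts that the target spectral sequence degenerates at $E_1$. Since $E_1$-degeneration is equivalent to the vanishing of all higher differentials $d_r$ ($r\ge 1$), and these correspond under the isomorphism of spectral sequences produced in the previous step, the source spectral sequence --- i.e.\ the logarithmic Hodge-to-de-Rham spectral sequence --- degenerates at $E_1$ as well. I do not anticipate any real obstacle: the substance lies entirely in \lemmaref{lem:H-DR} (ultimately Saito's strictness of direct images of mixed Hodge modules) and in \propositionref{prop:maincomp}; the only point needing a line of care is transporting the $F_p$-quasi-isomorphisms to $\gr^F$ and then invoking functoriality of the spectral sequence of a filtered complex.
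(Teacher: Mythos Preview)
Your proposal is correct and follows precisely the paper's approach: the paper states the corollary as an immediate consequence of the filtered comparison in \propositionref{prop:maincomp} applied to \lemmaref{lem:H-DR}, and you have simply spelled out the choice $(B_1,B_2)=(-D_J,\,0\cdot D_{I\setminus J})$ and the routine transfer of $E_1$-degeneration along a filtered quasi-isomorphism. The only minor remark is that the proof of \propositionref{prop:maincomp} already establishes the $\gr^F$-level quasi-isomorphism directly, so your five-lemma step, while correct, is not strictly needed.
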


When extremely $J=I$, the $\Dmod$-module underlying the twist extension
$${j_{1_J}}_*{j_{2_J}}_!V^i_{\cL_U}=j_*V^i_{\cL_U}$$ 
is the Deligne meromorphic extension $(\cV\otimes \cL^i_U)(*D)$, thanks to Proposition \ref{prop:shiftind} (3). In this case the smallest term in its Hodge filtration satisfying the following local vanishing under birational morphisms, which is a natural generalization of Grauert-Riemenschneider vanishing and relative Kawamata-Viehweg vanishing in the log canonical case; see also the proof of Theorem \ref{thm:lcsur}. 
\begin{thm}\label{prop:locvanlc}
Let $\mu: X\to Y$ be a projective birational morphism between complex varieties with $X$ smooth and a reduced divisor $E$ on $Y$ so that  $D=f^{-1}E$ is normal crossing. Assume that $V=(\cV, F_\bullet, \VV_\R)$ is a polarizable VHS on $U=X\setminus D$ and $\cL_U$ a torsion line bundle on $U$ so that $\cL_U^m\simeq\cO_U$. Then we have
\[R^j\mu_*(S^{\gee -D}(V^i_{\cL_U})\otimes\omega_X)=0, \textup{ for } j>0.\]
\end{thm}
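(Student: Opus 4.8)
The plan is to realise $\omega_X\otimes S^{\gee -D}(V^i_{\cL_U})$ as the bottom piece of the Hodge--filtered de Rham complex of the Deligne meromorphic extension, viewed as a mixed Hodge module, and then to push it forward along $\mu$ by means of the strictness of direct images; the birationality of $\mu$ is exactly what makes the argument close. Concretely: by Proposition~\ref{prop:shiftind}(3) the $\Dmod_X$-module $\Dmod_X\otimes_{V^0_D\Dmod_X}(\cV\otimes\cL^i_U)^{\gee -D}$ is the meromorphic extension $\cM_i:=(\cV\otimes\cL^i_U)(*D)$, and by Proposition~\ref{prop:intermex} (the case $J=I$) this $\Dmod_X$-module, equipped with the convolution filtration built from the subbundle filtration $F^{\gee -D}_\bullet$ of $(\cV\otimes\cL^i_U)^{\gee-D}$ (a subbundle filtration by Lemma~\ref{lem:main}), underlies the mixed Hodge module $M_i:=j_*V^i_{\cL_U}$. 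By construction the smallest nonzero step of this filtration is $F_{p_0}\cM_i=F^{\gee -D}_{p_0}=S^{\gee -D}(V^i_{\cL_U})$, where $p_0$ is the least index with $F^{\gee -D}_{p_0}\neq 0$. Writing $n=\dim X$, the filtered subcomplex $F_{p_0-n}\DR_X(\cM_i)$, whose term in cohomological degree $-n+\ell$ is $\Omega^\ell_X\otimes F_{p_0-n+\ell}\cM_i$, vanishes in all negative degrees and reduces in degree $0$ to $\omega_X\otimes F_{p_0}\cM_i$; that is,
\[F_{p_0-n}\DR_X(\cM_i)\;\simeq\;\omega_X\otimes S^{\gee -D}(V^i_{\cL_U})\quad\text{in the derived category of coherent sheaves on }X.\]

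Next I would push this forward. Since $M_i$ is a mixed Hodge module and $\mu$ is projective, Saito's theorem on the strictness of the Hodge filtration under direct images applies to $\mu_+(\cM_i,F_\bullet)$; combined with the compatibility of the de Rham functor with proper push-forward this gives
\[R\mu_*\big(\omega_X\otimes S^{\gee -D}(V^i_{\cL_U})\big)\;=\;R\mu_*\,F_{p_0-n}\DR_X(\cM_i)\;\simeq\;F_{p_0-n}\DR_Y(\mu_+\cM_i).\]
Because $\mu$ is birational, $\dim Y=\dim X=n$. Passing to the weight filtration of $M_i$, whose graded pieces are polarizable pure Hodge modules on $X$ with lowest Hodge index $\ge p_0$ (and equal to $p_0$ for at least one of them), one reduces to the following pure statement: for a polarizable pure Hodge module $P$ on $X$ with strict support and $p(P)=p_0$, and $\mu$ a \emph{birational} projective morphism, $R^j\mu_*\big(\omega_X\otimes F_{p_0}\cP\big)=0$ for $j>0$. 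This is the Grauert--Riemenschneider-type vanishing for Hodge modules: via the filtered decomposition theorem $\mu_+\cP\simeq\bigoplus_k\cH^k(\mu_+\cP)[-k]$ one checks that at the index $p_0-n$ only the degree-$0$ perverse cohomology ${}^p\cH^0(\mu_+\cP)$ contributes and that $F_{p_0-n}\DR_Y\big({}^p\cH^0(\mu_+\cP)\big)$ is concentrated in cohomological degree $0$ — here $\dim Y=n$ is used decisively. Feeding this back yields $R^j\mu_*\big(S^{\gee -D}(V^i_{\cL_U})\otimes\omega_X\big)=0$ for $j>0$.

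The main obstacle is the last step: one must verify that the higher perverse cohomologies of $\mu_+M_i$ make no contribution to the bottom-truncated de Rham complex on $Y$, i.e. that the lowest Hodge index is not lowered "too much" under the birational push-forward and that no positive-degree cohomology survives on the target. This is precisely where birationality (equivalently $\dim X=\dim Y$) is indispensable, in contrast with Saito vanishing for a general projective morphism, which requires a relatively ample twist. If one prefers to avoid this delicate bookkeeping on a possibly singular $Y$, one can reduce to the case of smooth $Y$: take a log resolution of $(Y,E)$, lift it to a common resolution dominating $X$, and descend the statement using the birational invariance of $S^{\gee\bullet}(\cdot)\otimes\omega$ recorded in Lemmas~\ref{lem:birDLattice} and~\ref{lem:birSV}; over a smooth base the bottom truncation of the Hodge-filtered de Rham complex is manifestly concentrated in degree $0$, and the relative vanishing is then an instance of Saito's Grauert--Riemenschneider-type vanishing for Hodge modules.
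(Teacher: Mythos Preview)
Your identification of $\omega_X\otimes S^{\gee -D}(V^i_{\cL_U})$ with the bottom step $F_{p_0-n}\DR_X(\cM_i)$ of the Hodge-filtered de Rham complex of the meromorphic extension $\cM_i=(\cV\otimes\cL_U^i)(*D)$, and your use of strictness to obtain $R^j\mu_*\big(\omega_X\otimes S^{\gee -D}(V^i_{\cL_U})\big)\simeq F_{p_0-n}\,\cH^j\mu_+\cM_i$, are exactly what the paper does. The divergence, and the gap, is in how you analyse the right–hand side.

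You pass to the weight filtration of $M_i=j_*V^i_{\cL_U}$ on $X$ and claim a reduction to the Grauert--Riemenschneider--type vanishing for pure summands with $p(P)=p_0$ and $\mu$ birational. But the boundary graded pieces $\text{gr}^W_{w+k}M_i$ for $k>0$ are pure Hodge modules strictly supported on strata of $D$, and because of the Tate twists appearing in their description they \emph{do} satisfy $F_{p_0}\neq 0$; concretely $\omega_X\otimes F_{p_0}\,\text{gr}^W_{w+k}$ restricts to something like $\omega_{D_I}$ on each codimension-$k$ stratum $D_I$. For an exceptional component, say a $\PP^1$ contracted to a point, $R^1\mu_*\omega_{\PP^1}=H^1(\PP^1,\cO(-2))\neq 0$, so the per-piece vanishing fails even though the total vanishes. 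Thus the long exact sequence coming from $W_\bullet$ does not prove the theorem: there is genuine cancellation among the boundary contributions that your argument does not see, and $\mu$ is certainly not birational on those supports. The alternative reduction to smooth $Y$ you propose at the end does not rescue this, since exceptional strata persist.

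The paper sidesteps the weight filtration entirely by the commutation $\mu_+(j_*V^i_{\cL_U})\simeq j_{E*}\big(\tilde\mu_+V^i_{\cL_U}\big)$, with $\tilde\mu=\mu|_{X\setminus D}$ and $j_E\colon Y\setminus E\hookrightarrow Y$. The inner pushforward is applied to the \emph{pure} object $V^i_{\cL_U}$, so Saito's decomposition theorem and strict-support decomposition apply directly on $Y\setminus E$; and since $j_E$ is affine, $j_{E*}$ is exact, giving $\cH^j\mu_+\cM_i\simeq j_{E*}\big(\bigoplus_Z M^j_Z\big)$ with $Z$ ranging over closed subvarieties of $Y\setminus E$. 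Birationality of $\tilde\mu$ forces every $Z$ occurring for $j\neq 0$ to be a proper subvariety, and then \cite[Prop.~2.6]{SaitoKC} gives $F_{p_0-n}(j_{E*}M^j_Z)=0$. This is the key idea you are missing: work over the open set where the source is pure and use affineness of the complementary embedding, rather than trying to control the mixed boundary on $X$.
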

\begin{proof}
By functoriality of direct images for (mixed) Hodge modules, we know 
\[\mu_*(j_*V^i_{\cL_U})\simeq j_{E*}({\tilde\mu}_*(V^i_{\cL_U})),\]
where $\tilde\mu=\mu|_U$ and $j_E\colon Y\setminus E\to Y$. By Saito's direct image theorem for pure Hodge modules (see \cite[Thm. (0.3)]{Sai90a} for the real case), we have non-canonical decomposition
\[{\tilde\mu}_*(V^i_{\cL_U})=\bigoplus \cH^i{\tilde\mu}_*(V^i_{\cL_U})[-i].\]  
Moreover, the strict support decomposition \cite[Prop. (1.5)]{Sai90a} gives a direct sum
\[\cH^i{\tilde\mu}_*(V^i_{\cL_U})\simeq\bigoplus_ZM^i_Z\]
over closed subvarieties of $Y\setminus E$. 

On the other hand, since $j_E$ is an affine morphism, it preserves exactness. But the Hodge-module direct image is compatible with the direct image for the underlying perverse structure (see \cite{Sai90b}). Hence, we know 
\[\cH^i\mu_*(j_*V^i_{\cL_U})\simeq j_{E*}\big(\cH^i{\tilde\mu}_*(V^i_{\cL_U})\big)\simeq\bigoplus_Zj_{E*}M^i_Z.\]

By \cite[Thm. 2.14]{Sai90b} (which holds for real mixed Hodge modules as well), we see the complex $\mu_*(j_*V^i_{\cL_U})$ is strict, which in particular means 
\begin{equation}\label{eq:inj222}
R^i\mu_*(F_{p(V)}(\cV\otimes \cL^i_U)(*D)\otimes \omega_X)\simeq F_{p(V)-n}\cH^{i}\mu_+\big((\cV\otimes \cL^i_U)(*D)\big),
\end{equation}
where $n$ is the dimension of $X$, $\mu_+$ is the direct-image functor for right $\Dmod$-modules and $p(V)\colon=\min\{p| F_p\cV\neq 0\}$. Furthermore, by \cite[Prop. 2.6]{SaitoKC}, we know 
\begin{equation}\label{eq:id333}
F_{p(V)-n}M_Z^i=0,
\end{equation} 
if $Z\neq Y\setminus E$. But since $\mu$ is birational, clearly $\cH^i{\tilde\mu}_*(V^i_{\cL_U})$ is supported on a proper subscheme of $Y\setminus E$ when $i\neq 0$. Since the filtration on $(\cV\otimes \cL^i_U)(*D)$ is defined by convolution, we know
\[F_{p(V)}(\cV\otimes \cL^i_U)(*D)\otimes \omega_X=S^{\gee -D}(V^i_{\cL_U})\otimes\omega_X.\]
Therefore, the local vanishing follows from \eqref{eq:inj222} and \eqref{eq:id333}.
\end{proof}

\section{Proofs of main theorems}
In this section, we prove the main theorems in \S1.2.

\noindent
{\bf Proof of Theorem \ref{thm:maininj}.} We prove the first statement. We write 
\[D'=\sum_{i\in I} a_i D_i\]
and set 
\[J=\{i\in I| a_i\neq0\}.\]
First, we take $\cL_U=\cL|_U$, and consider the first twisted VHS, $V_{\cL_U}$. Observe that the residue map along $D_i$ of $\cL$ is exactly ${a_i}/{N}$. In this case, the condition $\frac{1}{N}D'
\le D_u$ implies that 
$$(\cV\otimes \cL_U)^{(-D_J)^\gee+(0\cdot D_{I\setminus J})^\lee}=\cV^{\lee 0}\otimes \cL.$$ 
Then by construction, we know $(\Dmod_X\otimes_{V^0_D\Dmod_X}(\cV^{\lee 0}\otimes \cL), F_\bullet)$ underlies the twisted intermediate extension ${j_{1_J}}_*{j_{2_J}}_!V_{\cL_U}=j_!V_{\cL_U}$.

On the other hand, applying Proposition \ref{prop:shiftind} (5), we have identity
\begin{equation}\label{eq:id9999}
\Dmod_X\otimes_{V^0_D\Dmod_X}(\cV^{\lee 0}\otimes \cL)=\Dmod_X\otimes_{V^0_D\Dmod_X}(\cV^{\lee 0}\otimes \cL(E))
\end{equation}
for every effective divisor $E$ supported on $\supp (D')$. Since the Filtration on $\Dmod_X\otimes_{V^0_D\Dmod_X}(\cV^{\lee 0}\otimes \cL)$ is given by convolution,  the identity \eqref{eq:id9999} gives inclusions
\[S^{\lee 0}(V)\otimes \cL\subseteq S^{\lee 0}(V)\otimes \cL(E)\subseteq \Dmod_X\otimes_{V^0_D\Dmod_X}(\cV^{\lee 0}\otimes \cL),\]
and hence inclusions of complexes
\[S^{\lee 0}\otimes\omega_X\otimes \cL\subseteq S^{\lee 0}\otimes\omega\otimes \cL(E)\hookrightarrow \DR(\Dmod_X\otimes_{V^0_D\Dmod_X}(\cV^{\lee 0}\otimes \cL)).\]
Taking cohomology, since 
\[S^{\lee 0}\otimes\omega_X\otimes \cL=F_{\textup{smallest}}\DR(\Dmod_X\otimes_{V^0_D\Dmod_X}(\cV^{\lee 0}\otimes \cL)),\] 
the $E_1$-degeneration in Lemma \ref{lem:H-DR} gives the injection 
\[H^i(X, S^{\lee0}(V)\otimes\omega_X\otimes \cL)\to \mathbb H^i(X, \DR(\Dmod_X\otimes_{V^0_D\Dmod_X}(\cV^{\lee 0}\otimes \cL)))\]
factoring through $H^i(X, S^{\lee0}(V)\otimes\omega_X\otimes \cL(E))$. Therefore, we conclude that the natural morphism
\[H^i(X, S^{\lee0}(V)\otimes\omega_X\otimes \cL)\to H^i(X, S^{\lee0}(V)\otimes\omega_X\otimes \cL(E))\]
is also injective.

After taking $J$ empty and considering $V^{-1}_{\cL_U}$, the second statement follows from exactly the same arguments.
\qed
\\
\\
\noindent
{\bf Proof of Theorem \ref{thm:inj}.}  We first write
\[L-B\sim_\R A+F+E\]
as a sum of $\R$-divisors, where $A$ is ample, $F$ effective,  $A+E$ big and nef and $B$ supported on $D$. Since ample cone is open, by perturbing coefficients, we can assume  $A$, $E$ and $B$ are all $\Q$-divisor. By the upper semi-continuity of the upper Deligne lattice, we know that $S^{\lee B}(V)$ remains the same after perturbation. Here we use $\sim$ to denote linear equivalence.

We then take a log resolution of $(X, E+D)$, $\mu\colon X'\to X$ with centers inside the singularities of $E+F+D$. We can further assume that 
\[\mu^*(L-B)\sim_\Q \frac{1}{N} G\]
where $G$ is an effective divisor so that $G+\mu^*D$ is normal crossing for $N\gg 0$ and sufficient divisible and 
$\supp(\mu^*E)\subseteq \supp (G)$.

Now we take $\cL=\cO_{X'}(\mu^*L)$ and consider . Then we have 
\[\cL^N\sim \cO_{X'}(\mu^*(N\cdot B)+G).\]
Take $U=X'\setminus \supp(\mu^*D+G)$ and $\cL_U= \cL|_U$.
By considering the eigenvalues of the residues, one can see 
\[S^{\lee \mu^*B}(V)\otimes \cL=S^{\lee 0}(V_{\cL_U}).\]
By our choice of $N$, the eigenvalues of the residue along each irreducible component of $\supp (G)$ is not 0. Hence, the proof of Theorem \ref{thm:maininj} yieldes the injection of cohomology groups
\[H^i\big(X',S^{\lee \mu^*B}(V)\otimes\omega_{X'}(\mu^*L)\big)\to H^i\big(X',S^{\lee \mu^*B}(V)\otimes\omega_{X'}(\mu^*L+H))\big),\]
when $H$ is an effective divisor supported on $\supp(G)$. In particular, the natural morphism 
\begin{equation}\label{eq:injup}
H^i\big(X',S^{\lee \mu^*B}(V)\otimes\omega_{X'}(\mu^*L)\big)\to H^i\big(X',S^{\lee \mu^*B}(V)\otimes\omega_{X'}(\mu^*(L+E)))\big)
\end{equation}
is injective. Choosing $H$ sufficiently ample, by Serre vanishing we see 
\[H^i\big(X',S^{\lee \mu^*B}(V)\otimes\omega_{X'}(\mu^*L)\big)=0, \textup{ } i>0.\]
Picking $E$ sufficiently ample, the standard Leray-spectral-sequence argument shows that
\[R^j\mu_*(S^{\lee \mu^*B}(V)\otimes\omega_{X'})=0\]
for $j>0$. Pushing forward \eqref{eq:injup}, the proof is accomplished by Lemma \ref{lem:birSV}. 
\qed
\\
\\
\noindent
{\bf Proof of Theorem \ref{thm:NakanoVan}.} We prove the first statement. \\
\noindent
{\bf Step 1. }By definition of $\R$-linear equivalence, after perturbing coefficients we can assume the coefficients $a_i$ of $\Delta$ are all rational, and
\[L\sim_\Q -\Delta +\frac{1}{k}S\]
where $S$ is a semi-ample divisor. Then we pick general  $A\in \abs{\cO_X(mS)}$ for some sufficiently large and divisible $m>0$ so that $km\Delta$ is integral, $A$ smooth and $D=E +A$ normal crossing. Renaming the components of $D$, we set $D=\sum_{i\in I}D_i$, and $\Delta=\sum_i a_i D_i$.

\noindent
{\bf Step 2. }Consider $\cL=\cO_X(L-A)$ and $\cL_U=\cL|_U$ and the trivial VHS $V=(\cO_U, F_\bullet, \R_U)$ over $U$, where $U=X\setminus D$. Set 
$$J=I\setminus\{i\in I | \textup{ }a_i=1,\textup{ }a_i \textup{ coefficients of }\Delta\}.$$  Since 
$$L-A\sim_\Q-\Delta-\frac{km-1}{km}A,$$
we see that 
\[\cL(D)=(\cL_U)^{(-D_{J})^{\gee}+(0\cdot D_{I\setminus J})^\lee}\]
and 
\[{j_{1_J}}_*{j_{2_J}}_!V_{\cL_U}=(\Dmod_X\otimes_{V^0_D\Dmod_X}\cL(D), F_\bullet,  {Rj_{1_J}}_*{j_{2_J}}_!L_U).\]
Then we apply Lemma \ref{lem:H-DR}, we get $E_1$ degeneracy of the Hodge-to-de-Rham spectral sequence for ${j_{1_J}}_*{j_{2_J}}_!V_{\cL_U}$. Since in this case the filtration on $V_{\cL_U}$ is trivial, Corollary \ref{cor:logHDR} yields that the logarithmic Hodge-to-de-Rham spectral sequence 
\[H^{q}(X, \Omega^p(\log D)\otimes \cL)\Rightarrow H^{p+q}(X, {Rj_{1_J}}_*{j_{2_J}}_!L_{U})=H^{p+q}(X\setminus D_J, {j_{2_J}}_!L_U)\]
degenerate at the $E_1$ term. But the affineness assumption implies  that $X\setminus D_J$ is affine. Hence, by Artin-Grothendieck vanishing (cf. \cite[Thm. 3.1.13]{Laz1}), we know 
\[H^{p+q}(X\setminus D_J, {j_{2_J}}_!L_U)=0, \textup{ for } p+q>n.\] 
Therefore, the $E_1$ degeneration of the logarithmic Hodge-to-de-Rham spectral sequence gives us 
\begin{equation}\label{eq:van}
H^{q}(X, \Omega^p(\log D)\otimes \cL)=0,\textup{ for } p+q>n.
\end{equation}

We then prove the vanishing 
\[H^{q}(X, \Omega^p(\log E)\otimes \cL)=0,\textup{ for } p+q>n\]
by induction. If $p=0$, then $q>n$ and the assertion we need to prove is trivial. Suppose now that $q>0$ and consider the short exact sequence
\[0\to \Omega^p(\log E+A)(-A)\to \Omega^p(\log E)\to \Omega^p(\log E|_A)\to 0.\]
By tensoring with $\cO_X(L)$ and taking the long exact sequence in cohomology, we have a long exact sequence
\[H^{q}(X, \Omega^p(\log D)\otimes \cL)\to H^{q}(X, \Omega^p(\log E)(L))\to H^{q}(A, \Omega^p(\log E|_Q)(L|_A)).\]
When $p+q>n$, the first term vanishes by \eqref{eq:van}, while the third term vanishes by the inductive assumption. We thus obtain the vanishing of the second term and the proof of the first statement is accomplished.

To prove the second statement, after perturbing coefficients we can assume
\[L\sim_\Q -\Delta +\frac{1}{k}T\]
where $T$ is an ample divisor. Similar to the proof of the first statement, we pick general  $H\in \abs{\cO_X(mT)}$ for some sufficiently large and divisible $m>0$ so that $km\Delta$ is integral, $H$ smooth and $D=E +H=\sum_{i\in I}D_i$ normal crossing. Then we consider $\cL=\cO_X(L-H)$ and $\cL_U=\cL|_U$ and the trivial VHS $V=(\cO_U, F_\bullet, \R_U)$ over $U$, where $U=X\setminus D$. In this case, we set 
$$J=I\setminus \{i\in I | \textup{ }a_i=1,\textup{ }a_i \textup{ coefficients of }\Delta+\frac{1}{mk}A\}.$$  Since $H$ is very ample, we see that $X\setminus D_J$ is affine. Now we follow Step 2 of the proof of the first statement, and we obtain the desired vanishing. 
\qed

\section{Applications}

\subsection{A Fujita-type freeness theorem}
In this section, we prove Fujita-type global generation for $S^{\lee0}(V)\otimes \omega_X$, using Theorem \ref{thm:mainvan}.

Assume that $\Delta^n$ is a polydisk with holomorphic coordinates$(z_1,\dots,z_n)$ and $D_i$ is the divisor defined by $z_i=0$ and $D=\sum_{1\le i\le k} D_i$ for $k\le n$. We first collect the following lemma about local indecomposable decompositions of VHS, whose proof is immediate; see also \cite[Proof of Lemma 3.2]{PTW}  and \cite[(3.10.6)]{Sai90b}.
\begin{lemma}[Local indecomposable decomposition]\label{lem:lid}
Let $V$ be a polarizable real VHS on $\Delta^n\setminus D$. Then for every $\R$-divisor $B=\sum_{1\le i\le n} t_iD_i$, the Deligne lattice $V^B$ has an indecomposable decomposition as polarizable VHS
\[V^{\lee B}=\bigoplus_{\alpha=(\alpha_1,\dots,\alpha_n)} V^{\lee B}_\alpha\]
where $V^{\lee B}_\alpha=(\cV^{\lee B}_\alpha, F_{\alpha,\bullet}^{\lee B},\VV_{\R, e^{-2\pi\sqrt{-1}\alpha}})$ for some $t_i-1<\alpha_i\le t_i$. The decomposition is induced by the eigenspace decomposition 
$$\VV_\R=\bigoplus_{\alpha=(\alpha_1,\dots,\alpha_n)}\VV_{\R, e^{-2\pi\sqrt{-1}\alpha}},$$ where $\VV_{\R, e^{-2\pi\sqrt{-1}\alpha}}$ is the (simutaneous) generalized eigenspace of $\VV_\R$ with eigenvalues $e^{-2\pi\sqrt{-1}\alpha_i}$ with respect to monodromy actions around all $z_i$. In particular, we have the decomposition
\[S^{\lee B}(V)=\bigoplus_{\alpha=(\alpha_1,\dots,\alpha_n)}S^{\lee B}(V)_\alpha.\]
\end{lemma}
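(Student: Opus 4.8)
The plan is to prove Lemma~\ref{lem:lid} by reducing everything to the eigenspace decomposition of the local system $\VV_\R$ under the commuting monodromy operators around $z_1,\dots,z_n$. First I would observe that since $\Delta^n\setminus D$ is a polydisk minus coordinate hyperplanes, $\pi_1(\Delta^n\setminus D)\cong\Z^k$ is abelian and generated by the loops $\gamma_1,\dots,\gamma_k$ around $z_1=0,\dots,z_k=0$. The monodromy operators $T_i$ pairwise commute, and by the polarizability assumption their eigenvalues all have absolute value $1$, hence are of the form $e^{-2\pi\sqrt{-1}\alpha_i}$ for real $\alpha_i$. Decomposing $\VV_\R$ (after complexification, then descending the decomposition indexed by conjugate-closed sets of eigenvalues back to $\R$) into simultaneous generalized eigenspaces $\VV_{\R,e^{-2\pi\sqrt{-1}\alpha}}$ gives a direct sum decomposition of local systems. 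Since each $T_i$ is a morphism of the polarized VHS (being induced by parallel transport, it preserves the Hodge filtration and polarization up to the expected action), each generalized eigenspace $\VV_{\R,e^{-2\pi\sqrt{-1}\alpha}}$ is a sub-VHS, and the decomposition is as polarizable real VHS.

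Next I would transport this decomposition through the Deligne lattice construction of \S2.1. By the explicit $L^2$ description and the proof that $\cV^{\lee B}=L^2(\cV,D;B)$ is trivialized by the frame $s_j^t=e^{\sum\Gamma_i^{t_i}\log z_i}\cdot c_j$, the lattice $\cV^{\lee B}$ is built by block-diagonalizing $\Res_{D_i}\nabla$ according to the eigenvalue normalization $t_i-1<\text{eigenvalues}\le t_i$. Since the eigenspace decomposition of $\VV_\R$ is exactly compatible with the simultaneous eigenvalue decomposition of the residues — the generalized eigenspace for eigenvalue $e^{-2\pi\sqrt{-1}\alpha_i}$ of $T_i$ corresponds precisely to the generalized eigenspace for eigenvalue $\alpha_i$ of $\Res_{D_i}\nabla$ once we fix the normalization $t_i-1<\alpha_i\le t_i$ — the lattice decomposes as $\cV^{\lee B}=\bigoplus_\alpha \cV^{\lee B}_\alpha$, with $\cV^{\lee B}_\alpha$ the Deligne lattice of the sub-VHS with local system $\VV_{\R,e^{-2\pi\sqrt{-1}\alpha}}$. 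The induced Hodge filtration $F^{\lee B}_\bullet = \cV^{\lee B}\cap j_*F_\bullet$ respects this direct sum because the eigenspace projections are flat (they commute with $\nabla$), hence preserve $j_*F_\bullet$; so we get $F^{\lee B}_{\alpha,\bullet}$ and, by definition of $S^{\lee B}(V)$ as the bottom filtration step, the decomposition $S^{\lee B}(V)=\bigoplus_\alpha S^{\lee B}(V)_\alpha$. The claim that each summand is indecomposable follows because, by Theorem~\ref{thm:SaiStr} applied to the minimal extension, indecomposability of the VHS is governed by the local monodromy; after having split off distinct eigenvalues the remaining blocks have a single (generalized) eigenvalue for each $T_i$ and cannot split further as VHS — actually for the statement as written it suffices to produce \emph{some} indecomposable decomposition, and one then refines the eigenspace decomposition further into indecomposable summands, which automatically still have a well-defined single generalized eigenvalue tuple $e^{-2\pi\sqrt{-1}\alpha}$.

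The only genuinely delicate point — and the step I expect to require the most care — is the bookkeeping over $\R$ versus $\C$: the simultaneous generalized eigenspaces $\VV_{\C,e^{-2\pi\sqrt{-1}\alpha}}$ and $\VV_{\C,e^{2\pi\sqrt{-1}\alpha}}$ are conjugate, so to get a \emph{real} decomposition one groups complex-conjugate eigenvalue tuples together, unless $e^{-2\pi\sqrt{-1}\alpha_i}=\pm1$. The lemma's phrasing with a single $\alpha$ per summand should be read with this convention in mind, and I would simply note that the real structure descends because the polarization form and the real structure on $\VV_\R$ are respected by the monodromy, so conjugate eigenspaces are interchanged by complex conjugation and their sum is defined over $\R$. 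Everything else — compatibility of $\cap j_*F_\bullet$ with the splitting, the identification of residue eigenvalues with monodromy eigenvalues via $\gamma_i=e^{-2\pi\sqrt{-1}\Gamma_i^{t_i}}$, and the passage to $S^{\lee B}(V)$ — is formal given the constructions of \S2.1 and Lemma~\ref{lem:main}, which is why the paper calls the proof immediate. I would therefore present the argument in three short moves: (1) abelian $\pi_1$ and simultaneous generalized eigenspace decomposition of $\VV_\R$ as polarized VHS; (2) flatness of the eigenprojections $\Rightarrow$ the decomposition propagates to $\cV^{\lee B}$, to $F^{\lee B}_\bullet$, and to $S^{\lee B}(V)$; (3) refine to indecomposable summands, each retaining a single eigenvalue tuple.
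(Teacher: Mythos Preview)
Your overall strategy matches what the paper intends: the lemma statement already tells you the decomposition is \emph{induced by} the generalized-eigenspace decomposition of $\VV_\R$, and the paper declares the proof immediate with pointers to \cite{PTW} and \cite{Sai90b}. Steps (2) and (3) of your plan---propagating the splitting through $\cV^{\lee B}$, $F^{\lee B}_\bullet$, and $S^{\lee B}(V)$ via the explicit $L^2$/residue description of \S2.1---are exactly right and indeed formal once the VHS decomposition is in hand.

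There is, however, a genuine gap in step (1). You justify that each simultaneous generalized eigenspace $\VV_{\R,e^{-2\pi\sqrt{-1}\alpha}}$ is a sub-VHS by asserting that ``each $T_i$ is a morphism of the polarized VHS \dots\ it preserves the Hodge filtration.'' This is false: the Hodge filtration is \emph{not} flat (only Griffiths-transversal), so parallel transport around $\gamma_i$ does not carry $F^p_x$ back into $F^p_x$, and the monodromy operators are not morphisms of Hodge structures on the fiber. Consequently there is no direct reason for generalized eigenspaces of the $T_i$ to respect $F^\bullet$. The same misconception reappears when you write that the flat eigenprojections ``hence preserve $j_*F_\bullet$'': flatness alone does not force compatibility with $F$.

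The correct input here is Deligne's semisimplicity for polarizable VHS (equivalently, the theorem of the fixed part applied to $\End(V)$): the category of polarizable real VHS on $\Delta^n\setminus D$ is semisimple, so $V=\bigoplus_j V_j$ with each $V_j$ simple, and for a simple polarizable VHS the underlying local system is itself irreducible. On an irreducible local system each commuting $T_i$ has a single generalized eigenvalue, so grouping the $V_j$ by their eigenvalue tuple \emph{produces} the eigenspace decomposition already as a splitting of polarizable VHS. With this replacement your steps (2)--(3) go through unchanged, and your remarks on the $\R$-versus-$\C$ bookkeeping are fine.
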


The following theorem is a Hodge-module generalization of the relative Fujita-type freeness result in \cite{Kaw02}, where the global generation for higher direct images of dualizing sheaves in the normal crossing case follows from an alternative point of view using Hodge metrics and Kawamata coverings.
\begin{thm}\label{thm:fgg}
Let $X$ be a smooth projective variety of dimension $n$ with a simple normal crossing divisor $D$ and $V$ a polarizable real VHS on $X\setminus D$, and let $L$ be an ample divisor on $X$ and a point $x\in X$. Assume that for every klt pair $(X, B_0)$, there exists an effective $\Q$-divisor $B$ on $X$ satisfying the following conditions:
\begin{enumerate} [label=(\roman*)]
\item{$B\equiv \lambda L$ (numerical equivalence) for some $0<\lambda<1$;}
\item{$(X, B+B_0)$ is log canonical at $x$;}
\item{$\{x\}$ is a log canonical center of $(X, B+B_0)$.}
\end{enumerate}
Then the natural morphism 
\[H^0(X, S^{\lee 0}(V)\otimes \omega_X(L))\rarr S^{\lee 0}(V)\otimes \omega_X(L)|_{\{x\}}\]
is surjective. 
\end{thm}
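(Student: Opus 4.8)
The plan is to follow the standard Kawamata--Esnault--Viehweg strategy for Fujita-type freeness, now with the sheaf $\omega_X$ replaced by $S^{\lee 0}(V)\otimes\omega_X$, which is legitimate because the vanishing input we need is precisely Theorem~\ref{thm:mainvan}. First I would resolve the log canonical singularity: choose a log resolution $\mu\colon X'\to X$ of the pair $(X,B+B_0)$ (and of $D$), so that $\mu^*(K_X+B+B_0)=K_{X'}+\sum a_i E_i$ with the $E_i$ simple normal crossing, and so that there is a unique divisor $E_0$ with $a_0=1$ whose image is $\{x\}$, while all other components with coefficient $1$ are $\mu$-exceptional and disjoint from $E_0$ (this uses condition (ii) and (iii); a tie-breaking/perturbation of $B_0$ by a small ample, as usual, makes $\{x\}$ the only log canonical center lying over $x$). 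Passing to $X'$ is harmless for the statement because of the birational invariance Lemma~\ref{lem:birSV}: $\mu_*(S^{\lee \mu^*D'}(V)\otimes\omega_{X'})=S^{\lee 0}(V)\otimes\omega_X$ for the relevant boundary, and $R^j\mu_*$ of the same sheaf vanishes for $j>0$ by Theorem~\ref{prop:locvanlc} applied on $X'$; hence $H^0$ on $X$ and on $X'$ agree and surjectivity onto the fiber at $x$ can be checked upstairs at $E_0$.

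Next I would set up the vanishing that makes the restriction sequence exact. Write, on $X'$, the divisor $L'=\mu^*L$ and $N'=\lceil \sum_{a_i<1} a_i E_i\rceil$-type correction so that $S^{\lee 0}(V)\otimes\omega_{X'}(L'-E_0)\otimes\cO_{X'}(\text{fractional part})$ is of the shape $S^{\lee B}(V)\otimes\omega_{X'}(M)$ with $M-B$ nef and big: concretely, since $B\equiv\lambda L$ with $0<\lambda<1$, the $\mathbb{R}$-divisor $L'-E_0-\sum_{a_i<1}a_iE_i\equiv (1-\lambda)L'+(\mu\text{-exceptional, effective})$ is the sum of a nef and big class and an effective divisor supported on the exceptional locus; absorbing the exceptional effective part into the boundary $B$ (using $S^{\lee B+E}(V)=S^{\lee B}(V)(-E)$ from Corollary~\ref{cor:semiconfil}(1), and the local indecomposable decomposition Lemma~\ref{lem:lid} to see $S^{\lee 0}$ of $V$ is what appears after sheafifying), Theorem~\ref{thm:mainvan} gives
\[
H^i\bigl(X', S^{\lee 0}(V)\otimes\omega_{X'}(L'-E_0)\otimes(\text{fractional correction})\bigr)=0,\qquad i>0.
\]
Then the short exact sequence $0\to \cO_{X'}(-E_0)\to\cO_{X'}\to\cO_{E_0}\to 0$ tensored with $S^{\lee 0}(V)\otimes\omega_{X'}(L')$ (and the fractional correction, which is trivial along $E_0$ by the general position of $E_0$) yields, after taking cohomology,
\[
H^0\bigl(X', S^{\lee 0}(V)\otimes\omega_{X'}(L')\bigr)\twoheadrightarrow H^0\bigl(E_0, (S^{\lee 0}(V)\otimes\omega_{X'}(L'))|_{E_0}\bigr).
\]
Finally, since $\mu(E_0)=\{x\}$ and $E_0\to\{x\}$ is a connected fiber of a birational morphism (hence $H^0(E_0,\cO_{E_0})=\C$, which persists after the twist because the twisting bundle is the pullback of a line bundle from $x$), the right-hand side is identified with the fiber $S^{\lee 0}(V)\otimes\omega_X(L)|_{\{x\}}$, and pulling the $H^0$ on the left back down to $X$ via the projection formula and $\mu_*\cO_{X'}=\cO_X$ completes the argument.

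The main obstacle I anticipate is the bookkeeping in the second step: arranging the log resolution so that exactly one component $E_0$ over $x$ has coefficient $1$ and all other coefficient-$1$ components are exceptional and do not meet $E_0$ (the usual perturbation argument using that $(X,B_0)$ is klt and $L$ is ample, replacing $B_0$ by $B_0+\epsilon H$), and then checking that after moving all the ``bad'' effective exceptional part into the Deligne-lattice boundary one still has a nef and big twist so that Theorem~\ref{thm:mainvan} applies. A secondary subtlety, peculiar to the Hodge-module setting, is verifying that restriction of $S^{\lee 0}(V)\otimes\omega_{X'}$ to $E_0$ behaves well and that $R^j\mu_*$ vanishing of $S^{\lee \mu^*D}(V)\otimes\omega_{X'}$ is available in the precise generality stated in Theorem~\ref{prop:locvanlc}; both should follow from Lemma~\ref{lem:main} (subbundle filtrations, hence the sheaf is locally free and restricts compatibly) together with the local decomposition Lemma~\ref{lem:lid}, but this is where care is needed rather than in the cohomological vanishing itself.
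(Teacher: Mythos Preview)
Your overall strategy matches the paper's --- pass to a log resolution, kill $H^1$ of the kernel of a restriction sequence to the exceptional log canonical place via Theorem~\ref{thm:mainvan}, and lift --- but there is a genuine gap at the very first step: you never say what $B_0$ is. The hypothesis is a universal statement (``for every klt pair $(X,B_0)$ there exists $B$ \ldots''), so the proof must \emph{instantiate} it with a specific $B_0$, and that choice is what drives the argument. The paper fixes, via the local decomposition of Lemma~\ref{lem:lid}, a single eigencomponent $S^{\lee 0}(V)_\alpha$ near $x$ with residues $(\alpha_1,\dots,\alpha_n)$, and takes $B_0=-B_{s_j}:=-\sum_i\alpha_i D_i$ (klt because $-1<\alpha_i\le 0$). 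This is not cosmetic: it synchronizes the Deligne-lattice index with the actual residues of $V$, so that on the resolution one has $\mu^*(B-B_{s_j})=K_{X'/X}+E+F^+-F^-$ and the middle term of the restriction sequence is $S^{\lee \mu^*B_{s_j}+F^+-F^-}(V)$. Because $\mu^*(S^{\lee 0}(V)_\alpha)$ is a summand of $S^{\lee \mu^*B_{s_j}}(V)$ and \emph{jumps integrally} along each component, the fractional $F^+$ (with $\lfloor F^+\rfloor=0$ near $E$) does not make $\mu^*s_j$ vanish on $E$. If you tacitly set $B_0=0$, the index of the middle sheaf is off by the residues and there is no reason the pulled-back basis section survives restriction to $E$; your ``fractional correction'' has nothing to correct against.

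This is also why your final identification is too optimistic. You cannot argue that ``$H^0(E_0,\cO_{E_0})=\bbC$ and the twist is a pullback from $x$'': $S^{\lee 0}(V)$ on $X'$ is a higher-rank bundle that is not pulled back from the point, and its restriction to $E_0$ need not be trivial. The paper sidesteps this by working one eigencomponent and one basis vector $s_j$ at a time, checking directly that $\mu^*(s_j\otimes t)|_E\neq 0$ and lifting that element, rather than identifying all of $H^0$ on $E$ with the fiber. One smaller point: Theorem~\ref{prop:locvanlc} is stated for $S^{\gee -D}$, not $S^{\lee 0}$, so it does not give the $R^j\mu_*$-vanishing you invoke; the needed vanishing for $S^{\lee \mu^*B}(V)\otimes\omega_{X'}$ is established inside the proof of Theorem~\ref{thm:inj}, and in any case the paper's proof of Theorem~\ref{thm:fgg} avoids it by finishing with the inclusions $H^0(X',S^{\lee \mu^*B-E}(V)\otimes\omega_{X'}(\mu^*L))\subseteq H^0(X,S^{\lee 0}(V)\otimes\omega_X(L))$.
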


\begin{proof}
Let $\{s_1,...,s_k\}$ be a basis of $S^{\lee0}(V)_\alpha$ on a polydisk neighborhood $W$ around $x$, for some index $\alpha=(\alpha_1,\dots,\alpha_n)$ as in Lemma \ref{lem:lid}. After considering all direct summands $S^{\lee0}(V)_\alpha$ locally around $x$, it is sufficient to prove that the image of the morphism
\[H^0(X, S^{\lee 0}(V)\otimes \omega_X(L))\rarr S^{\lee 0}(V)\otimes \omega_X(L)|_{\{x\}}\]
contains $s_j\otimes t|_{\{x\}}$ for any $j$, where $t$ is a local generator of $\omega_X(L)$.

We set $B_{s_j}=\sum_i\alpha_iD_i$. Since $-1<\alpha_i\le0$ for all $i$, we have that $(X, -B_{s_j})$ is a klt pair. After positively perturbing coefficients of $B_{s_j}$ (if necessary), we can assume it is a $\Q$-divisor. By the assumption of the theorem, there exists an effective $\Q$-divisor $B$ such that 
\begin{enumerate} [label=(\roman*)]
\item{$B\equiv \lambda L$ for some $0<\lambda<1$;}
\item{$(X, B-B_{s_j})$ is log canonical at $x$;}
\item{$\{x\}$ is a log canonical center of $(X, B-B_{s_j})$.}
\end{enumerate}
Take a general element $B'$ of $\abs{mL}$ for $m\gg 0$ passing through $x$. After replacing $B$ by 
\[(1-\epsilon_1)B+\epsilon_2B'\]
for some suitable $0< \epsilon_i\ll \frac{1}{m}$, we can assume that $\{x\}$ is the only log canonical center of $(X, B-B_{s_j})$ passing through $x$. Let $\mu: X'\rarr X$ be a log resolution of $(X,D+\textup{supp}(B))$. Then 
\[\mu^* (B-B_{s_j})=K_{X'/X}+E+F^{+}-F^{-}\]
where $K_{X'/X}$ the relative canonical divisor and $E$ is the only log canonical place over $x$ (make a further perturbation of $B$ if there are more than one log canonical place over $x$), $F^{+}$ is effective satisfying $\floor {F^{+}}|_{\mu^{-1}W}= 0$ and $F^{-}$ is effective. Then we have a short exact sequence
\[0\rarr S^{\lee\mu^*B_{s_j}+E+F^{+}-F^{-}}(V)\otimes \mu^*\omega_X(L)\rarr S^{\lee\mu^*B_{s_j}+F^{+}-F^{-}}(V)\otimes \mu^*\omega_X(L)\to\]
\[\rarr S^{\lee\mu^*B_{s_j}+F^{+}-F^{-}}(V)\otimes \mu^*\omega_X(L)|_E\rarr 0.\]
By Lemma \ref{lem:lid} and local computation of residues, one sees that $\mu^*(S^{\lee0}(V)_\alpha)$ is a direct summand of $S^{\lee\mu^*B_{s_j}}(V)$ over $\mu^{-1}W$. Since $\floor {F^{+}}|_{\mu^{-1}W}= 0$ and we can make $\mu^*B_{s_j}$very close to $\mu^*(\sum\alpha_i D_i)$, we also see that  $\mu^*(S^{\lee0}(V)_\alpha)$ is a direct summand of $S^{\lee \mu^*B_{s_j}+F^{+}}(V)$ over $\mu^{-1}W$ (notice that the $S^{\lee0}(V)_\alpha$ and its pullback jump integrally to the next ones). Therefore, since $F^{-}$ is effective, we have 
\[\mu^*(s_j\otimes t)\in \Gamma(S^{\lee\mu^*B_{s_j}+F^{+}-F^{-}}(V)\otimes \mu^*\omega_X(L), \mu^{-1}W).\]
Moreover, since $E$ is the only log canonical place over $x$ and $E\subseteq \mu^{-1}W$, we have 
\[0\neq\mu^*(s_j\otimes t)|_E=\mu^*(s_j\otimes t|_{\{x\}}).\]

But we also know
\[S^{\lee\mu^*B_{s_j}+E+F^{+}-F^{-}}(V)\otimes \mu^*\omega_X(L)=S^{\lee \mu^*B}(V)\otimes\omega_{X'}(\mu^*L).\]
Hence, the vanishing in Theorem \ref{thm:mainvan} enables us to lift $\mu^*(s_j\otimes t)|_E$ to a section in 
\[H^0(X', S^{\lee\mu^*B-E}(V)\otimes\omega_{X'}(\mu^*L)).\]
Since $B$ is effective and $E$ is exceptional, the proof is accomplished by the inclusions
\[H^0(X', S(V)^{\lee \mu^*B-E}\otimes\omega_{X'}(\mu^*L))\subseteq H^0(X', S(V)^{\lee 0}(E)\otimes\omega_{X'}(\mu^*L))\]
and
\[H^0(X', S(V)^{\lee 0}(E)\otimes\omega_{X'}(\mu^*L))\subseteq H^0(X, S(V)^{\lee 0}\otimes\omega_{X}(L)).\]
\end{proof}

\subsection{Surjectivity for log canonical pairs}
As an application of Item \eqref{item:inj2}  in Theorem \ref{thm:maininj} (or more precisely Esnault-Viehweg injectivity as in Corollary \ref{cor:injtrivial} (2)) and the local vanishing in Theorem \ref{prop:locvanlc} for the trivial VHS, we obtain the following surjectivity for log canonical pairs, which, we believe, is known to experts. 

Indeed, it is conjectured to be true more generally when $Z$ is only normal and $(Z, \Delta)$ is log canonical in \cite[Ch. 12]{kolSh} and can be deduced by using the main theorem in  \cite{KK} that log canonical singularities are Du Bois. 

We want to give it a proof using Hodge modules and the injectivity obtained in this article under the extra condition that $Z$ is Gorenstein. To our knowledge, the local vanishing in Theorem \ref{prop:locvanlc} even for the trivial VHS is not known before in literature. In other words, the following surjectivity cannot be proved by only using the injectivtity of Esnault and Viehweg besides the normal crossing case.
\begin{thm}\label{thm:lcsur}
Let $Z$ be a projective normal varieties with Gorenstein singularities and let $L$ be a Cartier divisor. Assume that $(Z, \Delta)$ is a log canonical pair with $\Delta$ an $\Q$-Cartier divisor.  If $L-\Delta$ is semi-ample, then for every effective Cartier divisor $E$ supported on $\supp \Delta$ the natural morphism 
\[H^i(Z, \cO_Z(-L-E))\to H^i(Z, \cO_Z(-L))\]
is surjective, for every $i$. 
\end{thm}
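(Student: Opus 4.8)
The strategy is to reduce the statement on the singular variety $Z$ to the normal crossing injectivity already established, by passing to a log resolution and using the local vanishing of Theorem~\ref{prop:locvanlc} to descend. Let $\mu\colon X\to Z$ be a log resolution of $(Z,\Delta)$ so that $D=\mu^{-1}(\supp\Delta)\cup\Exc(\mu)$ is a simple normal crossing divisor and $\mu^*(L-\Delta)$ is pulled back from a semi-ample divisor; after perturbing coefficients we may assume $\Delta$ is a $\Q$-divisor and write $\mu^*(L-\Delta)\sim_\Q\frac1N G$ for an effective $G$ with $G+D$ normal crossing. The key dualization: since $Z$ is Gorenstein and normal, $\omega_Z$ is a line bundle, so Grothendieck--Serre duality turns the surjectivity for $H^i(Z,\cO_Z(-L-E))\to H^i(Z,\cO_Z(-L))$ into the injectivity for $H^{n-i}(Z,\omega_Z(L))\to H^{n-i}(Z,\omega_Z(L+E))$ (here $n=\dim Z$), which is the shape of the statements in Theorem~\ref{thm:maininj}~\eqref{item:inj1}.

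First I would set up the Hodge-module side: take $V=(\cO_U,F_\bullet,\R_U)$ the trivial VHS on $U=X\setminus D$, $\cL_U=\cO_X(\mu^*L-\text{(appropriate boundary)})|_U$, and identify $S^{\lee 0}(V)=\cO_X$ (as noted in the excerpt). By the birational invariance Lemma~\ref{lem:birSV}, $\mu_*(S^{\lee 0}(V)\otimes\omega_X)=S^{\lee 0}(V)\otimes\omega_Z=\omega_Z$ when the boundary over $Z$ is the right log-canonical boundary; more precisely I would arrange the coefficients so that the upper Deligne lattice upstairs pushes down to exactly $\omega_Z$, using the log canonicity of $(Z,\Delta)$ to guarantee the discrepancies are $\ge -1$. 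Next I would invoke Theorem~\ref{prop:locvanlc} (for the trivial VHS, the case the author emphasizes is new): $R^j\mu_*(S^{\gee -D}(V)\otimes\omega_X)=0$ for $j>0$, together with its companion $R^j\mu_*(\cO_X)=0$ / $R^j\mu_*\omega_X=0$ — the point being that the relevant higher direct images vanish, so that cohomology on $X$ computes cohomology on $Z$ via the Leray spectral sequence with no correction terms.

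Then the core step: apply the normal crossing injectivity of Theorem~\ref{thm:maininj}~\eqref{item:inj1} (equivalently Corollary~\ref{cor:injtrivial}) on $X$ with $\cL=\cO_X(\mu^*L)$, $D'$ the divisor realizing $\cL^N\simeq\cO_X(D')$, $E'=\mu^*E$ (or an effective divisor supported on $\supp D'$ dominating it), to get injectivity of $H^{n-i}(X,\omega_X\otimes\mu^*\cO_Z(L))\to H^{n-i}(X,\omega_X\otimes\mu^*\cO_Z(L)(E'))$ — here the hypothesis ``$L-\Delta$ semi-ample'' and the choice of $G$ with $\supp\mu^*E\subseteq\supp G$ feed exactly the hypothesis $\frac1N D'\le D_l$ of the theorem. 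Pushing this injection down along $\mu$ and using the vanishing of higher direct images to identify the pushed-forward groups with $H^{n-i}(Z,\omega_Z(L))$ and $H^{n-i}(Z,\omega_Z(L+E))$, then dualizing back, yields the desired surjectivity.

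The main obstacle I expect is the bookkeeping at the last two steps: precisely choosing the boundary divisor and the torsion twist upstairs so that (a) the upper Deligne lattice $S^{\lee 0}$ of the twisted trivial VHS pushes forward under $\mu$ to $\omega_Z(L+mE)$ for $m=0,1$ without picking up or losing a floor, which is where the Gorenstein and log canonical hypotheses are genuinely used, and (b) the hypotheses $\frac1N D'\le D_l$ (here $D_l=D$ since $V$ is trivial, so this is $\frac1N D'\le D$, i.e. the coefficients of $D'$ are at most $N$) are met simultaneously with $\supp\mu^*E\subseteq\supp D'$. The duality step itself is routine once $Z$ is Gorenstein, and the vanishing of $R^j\mu_*$ is handed to us by Theorem~\ref{prop:locvanlc}; it is the compatibility of all the integral-part truncations under $\mu_*$ — essentially showing the natural map $\mu_*\omega_X\to\omega_Z$ together with the log-canonical boundary gives no discrepancy correction — that requires the most care.
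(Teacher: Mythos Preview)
Your global strategy---Serre-dualize using the Gorenstein hypothesis, pass to a log resolution, establish an injectivity upstairs in the normal crossing setting, kill higher direct images, and push down---is exactly the paper's. The gap is in the specific upstairs setup. With $\cL=\cO_X(\mu^*L)$ and item~\eqref{item:inj1} you obtain
\[
H^{i}(X,\omega_X\otimes\mu^*\cO_Z(L))\hookrightarrow H^{i}(X,\omega_X\otimes\mu^*\cO_Z(L+E)),
\]
and the vanishing $R^j\mu_*\omega_X=0$ (this is Grauert--Riemenschneider, not Theorem~\ref{prop:locvanlc}) descends this only to an injectivity for $H^{i}(Z,(\mu_*\omega_X)(L))$. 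Since log canonical singularities are not rational in general, $\mu_*\omega_X\subsetneq\omega_Z$, and you do not recover the statement on $Z$. Lemma~\ref{lem:birSV} does not help: it compares two \emph{smooth} models. This is not bookkeeping with floors; the relative canonical divisor $K_{X/Z}$ has to enter the line bundle explicitly, and your choice of $\cL$ omits it.

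The paper fixes this by using item~\eqref{item:inj2} (Corollary~\ref{cor:injtrivial}(2)) with
\[
\cL=\cO_X\bigl(D-\mu^*L+K_{X/Z}-\lceil\Delta_X^{-}\rceil\bigr),
\]
where $K_X+\Delta_X=\mu^*(K_Z+\Delta)$ and $\Delta_X=\Delta_X^{+}-\Delta_X^{-}$ is the effective decomposition. Then $\omega_X\otimes\cL^{-1}(D)=\mu^*\omega_Z(\mu^*L)\otimes\cO_X(\lceil\Delta_X^{-}\rceil)$, and because $\lceil\Delta_X^{-}\rceil$ is effective and $\mu$-exceptional (here log canonicity and effectivity of $\Delta$ are used), this pushes forward to exactly $\omega_Z(L)$. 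The higher direct images are killed by Theorem~\ref{prop:locvanlc} applied to the \emph{twisted} VHS $V^{-1}_{\cL_U}$, via the identification $S^{\gee -D}(V^{-1}_{\cL_U})=\cL^{-1}(D)$; note this is $S^{\gee -D}$, not $S^{\lee 0}$, which is why item~\eqref{item:inj2} is the natural choice. Item~\eqref{item:inj2} also lets $E$ range over all of $D$, avoiding the support constraint in item~\eqref{item:inj1}.
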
 
\begin{proof}
Since $Z$ is assumed to be Gorenstein, by Serre duality, it is enough to obtain the injectivity of the nature morphism
\begin{equation}\label{eq:inj555}
H^i(Z, \omega_Z(L))\to H^i(Z, \omega_Z(L+E)).
\end{equation}
To this end, we take $\mu\colon X \to  Z$ a log resolution of the pair $(Z, \Delta)$ and assume 
\[K_{X}+\Delta_X\sim_\R \mu^*(K_Z+\Delta),\]
where $\Delta_X$ is normal crossing and $K_X$ and $K_Z$ are the canonical divisors. 
Since $(Z, \Delta)$ is log canonical, we know $\Delta_X$ is also log canonical.  Since $L-\Delta$ is semi-ample, we can assume that (perturb the coefficients of $\Delta_X$ if needed)
\[\mu^*L-K_{X/Z}\sim_\Q \Delta_X+\frac{1}{k}A\]
where $A$ is a smooth divisor so that $\Delta_X+A$ is normal crossing and $k>1$ a positive integer. Write $\Delta_X=\Delta_X^{+}-\Delta_X^{-}$ where $\Delta_X^{+}$ and $\Delta_X^{-}$ are effective with no common components. Then we have
\[\mu^*L-K_{X/Z}+\ceil{\Delta_X^{-}}\sim_\Q \Delta_X+\ceil{\Delta_X^{-}}+\frac{1}{k}A\]
and $\Delta_X+\ceil{\Delta_X^{-}}+\frac{1}{k}A$ is positive and log canonical. We also write $D=\supp(\Delta_X+A)$ for simplicity. By adding exceptional divisors (with coefficients 1) not supported on $D$ to $\Delta_X^{-}$, we can assume that $D$ contains all the exceptional divisor of $\mu$.  

We now take $V$ the trivial VHS on $U=X\setminus D$ and $\cL=\cO_X(D-\mu^*L+K_{X/Z}-\ceil{\Delta_X^{-}})$ and $\cL_U=\cL|_U$. 
By Corollary \ref{cor:injtrivial} (2), we thus obtain that the natural map
\begin{equation}\label{eq:inj444}
H^i(X, \omega_X\otimes \cL^{-1}(D))\to H^i(X, \omega_X\otimes\cL^{-1}(\pi^*E))
\end{equation} 
is injective. By construction, it is obvious that
\[\mu_*(\omega_X\otimes \cL^{-1}(D))=\omega_X(L)\]
and 
\begin{equation}\label{eq:inj3333}
S^{\gee -D}(V^{-1}_{\cL_U})=\cL^{-1}(D).
\end{equation} 
Pushing-forward the natural map \eqref{eq:inj444}, the local vanishing in Theorem \ref{prop:locvanlc} for the trivial VHS and identification \eqref{eq:inj3333} yield the desired injectivity in \eqref{eq:inj555}.

\end{proof}
\section*{References}
\bibliography{bibliography/general}
\begin{biblist}
\bib{ASiu}{article}{
   author={Angehrn, Urban},
   author={Siu, Yum Tong}
   title={Effective freeness and point separation for adjoint bundles},
   journal={Invent. Math.},
      volume={122},
      number={2},
      date={1995}
       pages={291\ndash 308},
}

\bib{Ara04}{article}{
   author={Arapura, Donu},
   title={Frobenius amplitude and strong vanishing theorems for vector bundles.},
   journal={Duke Math. J.},
      volume={121},
      number={2},
      date={2004}
       pages={231\ndash 267},
}
\bib{AMPW}{article}{
   author={Arapura, Donu},
   author={Matsuki, Kenji},
   author={Patel, Deepam},
   author={Włodarczyk, Jaros\l{l}aw},
   title={A Kawamata-Viehweg type formulation of the Logarithmic Akizuki-Nakano Vanishing Theorem},
	note={preprint}
	date={2018}
}

\bib{Bj}{book}{
   author={Bj\"ork, Jan-Erik},
   title={Analytic D-Modules and Applications},
   series={Mathematics and Its Applications},
   volume={247},
   publisher={Kluwer Academic Publishers},
   place={Dordrecht, Netherlands},
   date={1993},
}

\bib{Bru17}{article}{
   author={Brunebarbe, Yohan},
   title={Semi-positivity from Higgs bundles},
   note={\href{https://arxiv.org/abs/1707.08495}{	arXiv:1707.08495}}
      date={2017},
}

\bib{CKS}{article}{
      author={Cattani, Eduardo},
      author={Kaplan, Aroldo},
      author={Schmid, Wilfried},
       title={Degeneration of {H}odge structures},
        date={1986},
     journal={Ann. Math.},
      volume={123},
      number={3},
       pages={457\ndash 535},
         url={http://www.jstor.org/stable/1971333},
}

\bib{EL}{article}{
      author={Ein, Lawrence},
      author={Lazarsfeld, Robert},
      title={Global generation of pluricanonical and adjoint linear series on smooth projective threefolds},
        date={1993},
     journal={J. Amer. Math. Soc.},
      volume={6},
      number={4},
       pages={875\ndash 903},
}

\bib{EV}{book}{
   author={Esnault, H{\'e}l{\`e}ne},
   author={Viehweg, Eckart},
   title={Lectures on vanishing theorems},
   series={DMV Seminar},
   volume={20},
   publisher={Birkh\"auser Verlag},
   place={Basel},
   date={1992},
   pages={vi+164},
}

\bib{HTT}{book}{
   author={Hotta, R.},
   author={Takeuchi, K.},
   author={Tanisaki, T.},
   title={D-modules, perverse sheaves, and representation theory},
   publisher={Birkh\"auser, Boston},
   date={2008},
}

\bib{HLWY}{article}{
   author={Huang,Chunle},
   author={Liu, Kefeng},
   author={Wan, Xueyuan},
   author={Yang, Xiaokui},
   title={Logarithmic vanishing theorems on compact Kähler manifolds I},
	note={\href{https://arxiv.org/abs/1611.07671}{arXiv:1611.07671}}
	date={2016}
}

\bib{Kaw97}{article}{
   author={Kawamata, Yujiro},
   title={On Fujita’s freeness conjecture for 3-folds and 4-folds},
   journal={Math. Ann.},
   volume={308}
   number={3}
   pages={491\ndash 505}
   date={1997}
}

\bib{Kaw02}{incollection}{
   author={Kawamata, Yujiro},
   title={On a relative version of Fujita’s freeness conjecture},
   booktitle={Complex Geometry (G\"ottingen, 2000)},
   publisher={Springer},
   address={Berlin},
   date={2002},
   pages={135\ndash 146}
}

\bib{Kol86a}{article}{
   author={Koll\'ar, J\'anos},
   title={Higher direct images of dualizing sheaves I},
   journal={Ann. of Math.}
   volume={123}
   number={1}
   pages={11\ndash 42}
   date={1986}
}

\bib{Kol86b}{article}{
   author={Koll\'ar, J\'anos},
   title={Higher direct images of dualizing sheaves Ii},
   journal={Ann. of Math.}
   volume={124}
   number={1}
   pages={171\ndash 202}
   date={1986}
}

\bib{kolSh}{book}{
   author={Koll\'ar, J\'anos},
   title={Shafarevich maps and automorphic forms},
   series={M. B. Porter Lectures}
   publisher={Princeton University Press},
   address={Princeton, NJ}
   date={1995},
}
\bib{KK}{article}{
   author={Koll\'ar, J\'anos},
   author={Kov\'acs, S\'andor},
   title={Log canonical singularities are Du Bois},
   journal={J. Amer. Math. Soc.}
   volume={23}
   number={3}
   pages={791\ndash 813}
   date={2010}
}

\bib{Laz1}{book}{
   author={Lazarsfeld, Robert},
   title={Positivity in algebraic geometry I},
   series={Ergebnisse der Mathematik und ihrer Grenzgebiete}
   publisher={TSpringer-Verlag},
   volume={49},
   place={Berlin}
   date={2004},
   }

\bib{Moc06}{article}{
   author={Mochizuki, Takuro},
   title={Kobayashi-Hitchin correspondence for tame harmonic bundles and an application},
	journal={Ast\'erisque},
   number={309},
   date={2006},
   pages={viii+117},
}

\bib{Moc09}{article}{
   author={Mochizuki, Takuro},
   title={Kobayashi-Hitchin correspondence for tame harmonic bundles. II},
	journal={Geom. Topol.},
	volume={13}
   number={1},
   date={2009},
   pages={359\ndash 455},
}

\bib{PM17}{article}{
author={Musta\c{t}\u{a}, Mircea},
   author={Popa, Mihnea},
   title={Hodge ideals},
	note={To appear in Memoirs of the AMS. \href{https://arxiv.org/abs/1605.08088}{arXiv:1605.08088}}
	date={2017}
}

\bib{PPS}{article}{
   author={Pareschi, Giuseppe},
   author={Popa, Mihnea},
   author={Schnell, Christian},
   title={Hodge modules on complex tori and generic vanishing for compact Kaehler manifolds},
	journal={Geom. Topol.},
   volume={21},
   date={2017},
   pages={553--581},
}

\bib{PTW}{article}{
   author={Popa, Mihnea},
   author={Taji, Behrouz},
   author={Wu, Lei},
   title={Brody hyperbolicity of base spaces of certein families of varieties},
	note={\href{https://arxiv.org/abs/1801.05898}{arXiv:1801.05898 }}
	date={2018}
}

\bib{SabVan}{book}{
   author={Sabbah, Claude},
   title={D-modules et cycles \'evanescents (d'apr\`es B. Malgrange et M. Kashiwara)},
   series={G\`eom\`etrie alg\`ebrique et applications, III (La R\'abida, 1984)},
   volume={340},
   publisher={Hermann},
   place={Paris},
   date={1987},
   pages={53--98},
}

\bib{Sai88}{article}{
   author={Saito, Morihiko},
   title={Modules de Hodge polarisables},
	journal={Publ. Res. Inst. Math. Sci.},
   volume={24},
   date={1988},
   number={6},
   pages={849--995},
}

\bib{Sai90a}{article}{
   author={Saito, Morihiko},
   title={Decomposition theorem for proper K\"ahler morphisms},
   journal={Tohoku Math. J.},
   volume={42},
   date={1990},
   number={2},
   pages={127--147},
}

\bib{Sai90b}{article}{
   author={Saito, Morihiko},
   title={Mixed Hodge modules},
	journal={Publ. Res. Inst. Math. Sci.},
   volume={26},
   date={1990},
   number={2},
   pages={221--333},
}

\bib{SaitoKC}{article}{
   author={Saito, Morihiko},
   title={On Koll\'ar's conjecture},
	journal={Proc. Sympos. Pure Math.},
   volume={52, Part 2},
   date={1991},
   pages={509--517},
}

\bib{Sch73}{article}{
      author={Schmid, Wilfried},
       title={Variation of Hodge structures: singularities of the period
  mapping},
        date={1973},
     journal={Invent. Math.},
      volume={22},
       pages={211\ndash 319},
         url={http://www.jstor.org/stable/1971333},
}

\bib{Sim90}{article}{
   author={Simpson, Carlos},
   title={Harmonic bundles on noncompact curves},
	journal={J. Amer. Math. Soc.},
   volume={3},
   date={1990},
   number={3},
   pages={713\ndash 770},
}

\bib{SV11}{incollection}{
   author={Schmin, Wilfried},
   author={Vilonen, Kari},
   title={Hodge theory and unitary representations of reductive Lie groups},
   booktitle={Frontiers of mathematical sciences},
   publisher={Int. Press,},
   address={Somerville, MA}
   date={2011},
   pages={397\ndash420},
}

\bib{St}{article}{
   author={Steenbrink, Joseph},
   title={Vanishing theorems on singular spaces},
   journal={Ast\'erisque},
   %note={Differential systems and singularities (Luminy, 1983)},
   date={1985},
   number={130},
   pages={330\ndash 341}
}

\bib{Suh15}{article}{
   author={Suh, Junecue},
   title={Vanishing theorems for mixed Hodge modules and applications},
   note={To appear in J. Eur. Math. Soc.}
   date={2015}
}

\bib{Wu15}{article}{
   author={Wu, Lei},
   title={Vanishing and injectivity theorems for Hodge modules},
	journal={Trans. Amer. Math. Soc.},
   volume={369},
   date={2017},
   pages={7719--7736},
}
\bib{WuThesis}{article}{
   author={Wu, Lei},
   title={Multi-indexed Deligne extensions and multiplier subsheaves},
	note={Ph.D thesis}
	date={2017}
}

\bib{Wu17}{article}{
   author={Wu, Lei},
   title={Near-by and Vanishing cycles for perverse sheaves and D-modules},
	note={\href{https://arxiv.org/abs/1709.10158}{arXiv:1709.10158}}
	date={2017}
}

\bib{YZ}{article}{
   author={Ye, Fei},
   author={Zhu, Zhixian}
   title={Global generation of adjoint line bundles on projective 5-folds},
   journal={Manuscripta Math.},
   volume={153},
   number={3-4}
   date={2017},
   pages={545\ndash 562},
}
\end{biblist}
\end{document}